\documentclass[10pt,twoside,reqno]{amsart}
\usepackage[foot]{amsaddr}

\usepackage{amssymb,amsfonts,amsmath,amsthm,bm}
\usepackage[mathscr]{eucal}
\usepackage{bbm}
\usepackage{stmaryrd}
\usepackage{fullpage}
\usepackage{blkarray}
\usepackage{array}
\usepackage{mathdots}
\usepackage{arydshln}
\usepackage{mathdots}

\usepackage{etoolbox}
\patchcmd{\section}{\scshape}{\bfseries}{}{}
\makeatletter
\renewcommand{\@secnumfont}{\bfseries}
\makeatother

\theoremstyle{definition}
\newtheorem{thm}{Theorem}
\newtheorem*{thm*}{Theorem}
 
\newtheorem{lem}[thm]{Lemma}
\newtheorem{defi}[thm]{Definition} 
\newtheorem{cor}[thm]{Corollary}
\newtheorem{rem}[thm]{Remark}

\newtheorem{conj}[thm]{Conjecture}

\numberwithin{equation}{section}
\numberwithin{thm}{section}

\usepackage{enumerate}
\usepackage[shortlabels]{enumitem}

\usepackage{hyperref}
\usepackage[dvipsnames]{xcolor}
\newcommand\myshade{85}
\colorlet{mylinkcolor}{red}
\colorlet{mycitecolor}{blue}
\colorlet{myurlcolor}{Aquamarine}
\hypersetup{
	linkcolor  = mylinkcolor,
	citecolor  = mycitecolor,
	urlcolor   = myurlcolor!\myshade!black,
	colorlinks = true,
}

\usepackage{pythonhighlight}

\usepackage{tikz}

\usetikzlibrary{arrows,automata}
\usepackage[all,cmtip]{xy}


\newcommand{\la}[1]{\mathfrak{#1}}


\newcommand{\ZZ}{\mathbb{Z}}


\newcommand{\cC}{\mathcal{C}}

\DeclareMathOperator{\ch}{ch}

\DeclareMathOperator{\typeA}{A}

\newcommand{\qbin}[2]{\begin{bmatrix} #1 \\ #2 \end{bmatrix}}

\allowdisplaybreaks

\begin{document}

\date{}

\title{Completing the $\typeA_2$ Andrews--Schilling--Warnaar identities}
\author{Shashank Kanade}
\address{Department of Mathematics, University of Denver, Denver, CO 80208}
\email{shashank.kanade@du.edu}

\author{Matthew C.\ Russell}
\address{Department of Mathematics, 
University of Illinois Urbana-Champaign,
Urbana, IL 61801}
\email{mcr39@illinois.edu}

\begin{abstract}
We study the Andrews--Schilling--Warnaar sum-sides for the principal characters of standard (i.e., integrable, highest weight) modules  of $\typeA_2^{(1)}$.
These characters have been studied recently by various subsets of Corteel, Dousse, Foda, Uncu, Warnaar and Welsh.	
We prove complete sets of identities for moduli $5$ through $8$ and $10$, in Andrews--Schilling--Warnaar form. 
The cases of moduli $6$ and $10$ are new. 
Our methods depend on the Corteel--Welsh recursions governing the cylindric partitions and on 
certain relations satisfied by the Andrews--Schilling--Warnaar sum-sides.
We speculate on the role of the latter in the proofs of higher modulus identities.
Further, we provide a complete set of conjectures for modulus $9$.
In fact, we show that at any given modulus, a complete set of conjectures may be deduced using a subset of ``seed'' conjectures.
These seed conjectures are obtained by appropriately truncating conjectures for the ``infinite'' level.
Additionally, for moduli $3k$, we use an identity of Weierstra{\ss}   to deduce new sum-product identities 
starting from the results of Andrews--Schilling--Warnaar.
\end{abstract}
\maketitle

\renewcommand\contentsname{\textbf{Contents}}
\setcounter{tocdepth}{1}
\tableofcontents

\section{Introduction}

Characters of various structures related to the modules of affine Lie algebras, and more generally, vertex operator algebras, are a tremendously rich 
source of various kinds of combinatorial, number-theoretic and $q$-series-theoretic identities. 
The landscape of this subject is truly vast, and we shall restrict ourselves to the scenarios that lead to the identities of the Rogers--Ramanujan-type:
\begin{align}
\sum_{n\geq 0}\dfrac{q^{n^2}}{(q)_n} &= \prod_{m\geq 0}\dfrac{1}{(1-q^{5m+1})(1-q^{5m+4})}\label{eqn:RR1}\\
\sum_{n\geq 0}\dfrac{q^{n^2+n}}{(q)_n} &= \prod_{m\geq 0}\dfrac{1}{(1-q^{5m+2})(1-q^{5m+3})}\label{eqn:RR2}.
\end{align}

Here and below, we use the standard notation for $n\in\ZZ_{\geq 0}\cup\{\infty\}$:
\begin{align}
(a;q)_n=\prod_{0\leq t < n} (1-aq^t),
\end{align}
and simply writing $(a)_n$ when the base $q$ is understood.
We will also use the following notation:
\begin{align}
\theta(a;q)&=(a; q)_\infty(q/a; q)_\infty\\
\theta(a_1,a_2,\cdots,a_k;q)&=\theta(a_1;q)\theta(a_2;q)\cdots \theta(a_k;q).
\end{align}

Throughout this paper, we will be viewing identities such as \eqref{eqn:RR1} and \eqref{eqn:RR2} as purely formal power series 
identities. We shall refer to the periodicity in the product-side as the ``modulus'' of the identity, for instance, we will say
that the Rogers-Ramanujan identities above \eqref{eqn:RR1} and \eqref{eqn:RR2} are mod $5$ identities.

We recall the combinatorial interpretations of \eqref{eqn:RR1} and \eqref{eqn:RR2}:
The LHS of \eqref{eqn:RR1} is naturally seen as the generating function of partitions where adjacent parts differ by at least $2$,
and the RHS of \eqref{eqn:RR1} is the generating function of partitions where each part is $\equiv \pm 1\pmod{5}$.
The LHS of \eqref{eqn:RR2} is the generating function of partitions where adjacent parts differ by at least $2$ and where $1$ is not allowed as a part,
and the RHS of \eqref{eqn:RR2} is the generating function of partitions where each part is $\equiv \pm 2\pmod{5}$.
Lepowsky and Wilson (see \cite{LepWil-RR}, \cite{LepWil-struI} and \cite{LepWil-struII}) were the first to prove these combinatorial identities using representation theory of $\typeA_1^{(1)}$ at level $3$, and to connect Andrews--Gordon \cite[Ch.\ 7]{And-book} and Andrews--Bressoud \cite{And-br}, \cite{Bre-identities} identities to standard
modules of $\typeA_1^{(1)}$. Meurman and Primc \cite{MeuPri-annide} then proved these higher level identities representation-theoretically.
Since then, this subject has grown immensely, and various kinds of deep identities related to affine Lie algebras continue to be found and conjectured. As two recent examples, we only mention
\cite{GriOnoWar} and \cite{CapMeuPriPri}, and for some more general background, refer the reader to the excellent book by Sills, \cite{Sil-book}.
 
For $\la{g}$ an affine Kac--Moody Lie algebra and a highest weight module $M$ with highest weight $\lambda$, the principally specialized character of $M$ is:
\begin{align}
\chi(M) = \left.\left(e^{-\lambda}\cdot\ch(M)\right)\right\vert_{e^{-\alpha_0},\dots,e^{-\alpha_r} \mapsto q},
\end{align}
where $\ch(M)$ is the usual character of $M$, so that $\chi(M)$ is a formal power series in $\ZZ[[e^{-\alpha_0},\dots,e^{-\alpha_r}]]$.
Up to the factor $F={(q;q^2)_\infty^{-1}}$, principally specially characters of irreducible standard 
(i.e., integrable and highest weight) modules of $\typeA_1^{(1)}$
lead to the Andrews--Gordon and the Andrews--Bressoud identities. 
Due to the work of Lepowsky and Wilson, this factor $F$ can be seen as the character of the principal Heisenberg subalgebra of $\typeA_1^{(1)}$.
We now introduce the terminology commonly used for this procedure of deleting the factor $F$ from the character -- 
by principal character of a highest weight module $M$, we mean the principally specialized character of $M$ divided by the appropriate
factor $F$ which depends solely on $\la{g}$. If $M$ is an irreducible standard module $L(\lambda)$, we shall use $\chi(\Omega(\lambda))$ to denote 
its principal character.
In this notation, \eqref{eqn:RR1} and \eqref{eqn:RR2} are related to $\la{g}=\typeA_1^{(1)}$, with
the RHS of \eqref{eqn:RR1} being $\chi(\Omega(2\Lambda_0+\Lambda_1))=\chi(\Omega(\Lambda_0+2\Lambda_0))$
and the RHS of \eqref{eqn:RR2} being $\chi(\Omega(3\Lambda_0))=\chi(\Omega(3\Lambda_1))$, as can be calculated using Weyl-Kac character formula
and Lepowsky's numerator formula.

It is now natural to ask what happens for the principal characters of standard modules for the higher rank affine Lie algebras. 
The short answer is that no general answer is known.
However, in 1999, Andrews, Schilling and Warnaar in their groundbreaking paper \cite{AndSchWar} invented an $\typeA_2$ generalization of the ($\typeA_1$) Bailey lemma and found 
$q$-series sum-sides (analogous to the LHS of \eqref{eqn:RR1} and \eqref{eqn:RR2}) for many, but not all, principal characters of standard modules 
for $\typeA_2^{(1)}$.
For example, one of the identities of \cite{AndSchWar} related to the level $3$ standard modules for $\typeA_2^{(1)}$ reads:
\begin{align}
{(q)_\infty}
\sum_{r,s\geq 0}
&
\dfrac{q^{r^2-rs+s^2+r+s}}{(q)_{r+s}(q)_{r+s+1}}\qbin{r+s}{r}_{q^3}
=
\dfrac{1}{\theta(q^2,q^3;\,q^6)}
=\chi(\Omega(3\Lambda_0)).
\end{align}
However, note the crucial fact that due to the presence of the factor $(q)_\infty$,
the LHS does not represent a manifestly non-negative sum-side for the principal character.
Compare this with the situation of say, \eqref{eqn:RR1}, \eqref{eqn:RR2}.
Regardless, the identities given in \cite{AndSchWar} were a major achievement.
See \cite{War-A2further} for further illuminating insights on \cite{AndSchWar}.
See also \cite{War-HL} where Warnaar uses Hall--Littlewood polynomials to reprove the 
$\typeA_2$ Bailey lemma and many of the sum-product identities in \cite{AndSchWar}.

Let $\ell$ be the level corresponding to a standard module of $\typeA_2^{(1)}$. 
Then, the modulus of the corresponding identity is $m=\ell+3$. Choose the $k$ such that $m\in 3k+\{1,0,-1\}$.
Now, at level $\ell$, there are roughly as many distinct principal characters as the number of partitions of $\ell$
with at most $3$ parts. In general, for type $\typeA_r^{(1)}$, this is the number of compositions of $\ell$ with at most $r+1$ parts up to a dihedral symmetry, however for $\typeA_2^{(1)}$, 
we have $D_3=S_3$.
For $\typeA_2^{(1)}$, this number grows as a quadratic in $\ell$ (or $k$), however, the number of identities found in \cite{AndSchWar} is equal to $k+2$ if $3\nmid m$,
or $k$ if $3\mid m$. This means that a major subset of identities is missing from \cite{AndSchWar}.
In fact, the first time this happens is for level $3$, where one identity is missing, which we discover and prove below, see \eqref{eqn:210}:
\begin{align}
\sum_{r,s\geq 0}
&
\dfrac{q^{r^2-rs+s^2+s}}{(q)_{r+s}(q)_{r+s+1}}\qbin{r+s}{r}_{q^3}
=
\dfrac{1}{(q)_\infty}\dfrac{1}{\theta(q,q^2;\,q^6)}
=\dfrac{1}{(q)_\infty}\chi(\Omega(2\Lambda_0+\Lambda_1)).
\end{align}

In \cite{AndSchWar} itself, the authors showed how to transform their sum-sides to manifestly non-negative ones in the case of level $4$ (i.e., mod $7$ identities).
The missing identity at this level was first conjectured by Feigin, Foda and Welsh \cite{FeiFodWel} in the manifestly non-negative form and proved by Corteel and Welsh \cite{CorWel}.
Recently, Corteel, Dousse and Uncu \cite{CorDouUnc} gave the complete set of identities in the manifestly non-negative form for level $5$ (i.e., mod $8$ identities).
Even more recently, Warnaar \cite{War} added more identities at every level not divisible by $3$.
Still, the problem of completing the set of Andrews--Schilling--Warnaar identities remains, and the present paper is a step in the direction of answering this problem.

In \cite{FodWel}, Foda and Welsh studied the principal characters of standard modules of $\typeA_r^{(1)}$ (which can be alternately seen as characters of modules of $\mathcal{W}_{r+1}$ vertex operator algebras at a family of specific central charges) using cylindric partitions. Cylindric partitions were in fact first studied by Gessel and Krattenthaler in \cite{GesKra}, and they had already noticed the role of the affine Weyl group of $\typeA_r^{(1)}$ controlling their combinatorics. Actually, even in \cite{AndSchWar}, one of the $\typeA_2$ Bailey pairs (related to levels divisible by $3$) comes from the work of Gessel and Krattenthaler. 

Cylindric partitions are indeed extremely well-suited for this study, see for instance \cite{DJKMO}, \cite{JMMO} and \cite{Tin-crystal}. Let $c=(c_0,c_1,\cdots,c_r)$ denote a composition (possibly containing zeros) of $\ell$. 
After Foda and Welsh's analysis \cite{FodWel}, Corteel and Welsh \cite{CorWel} considered the maximum part statistic for the cylindric partitions. 
Let $F_c(z,q)$ be the generating function for the cylindric partitions of profile $c$, where $z$ corresponds to the maximum part and $q$ with total weight of the partition.
Corteel and Welsh discovered a system of recurrences that tie together $F_c(z,q)$ for various compositions of the same length of a given level $\ell$. 
On the other hand, due to Borodin's product formula \cite{Bor}, we know that $F_c(1,q)$ is an infinite product which equals the principal character of the  $\typeA_r^{(1)}$ module $L(c_0\Lambda_0+\cdots+c_r\Lambda_r)$
up to a factor of $(q)_\infty$.
Thus, cylindric partitions form the main engine on which the proofs in \cite{CorWel} and \cite{CorDouUnc} are based.

Now we come to the results and conjectures of the present paper.

Firstly, based on a crucial input by Warnaar \cite{War-email}, we enhace the Andrews--Schilling--Warnaar sum-sides by introducing the variable $z$ corresponding to the maximum part statistic of the cylindric partitions. 

Next, for any integrable level $\ell$ of $\typeA_2^{(1)}$, we present explicit conjectures for 
\begin{align}
H_c(z,q)=\dfrac{(zq)_\infty}{(q)_\infty} F_c(z,q)
\end{align}
for a large subset of compositions $c=(c_0,c_1,c_2)$ of $\ell$. Let us now describe this subset.
Due to the inherent cyclic symmetry in the definition of cylindric partitions, we know that $F_{(c_0,c_1,\cdots,c_r)}(z,q)=F_{(c_1,c_2,\cdots,c_r,c_0)}(z,q)$. We may thus arrange $c=(c_0,c_1,c_2)$ so
that $c_0$ is the biggest part. Now, our Conjecture \ref{conj:main} provides an explicit formula for $H_c(z,q)$ in terms of sums in the Andrews--Schilling--Warnaar form whenever $c_1,c_2\leq k-1$.
These are our ``seed'' conjectures.
For instance, in the example of level $\ell=16$ presented below in \eqref{eqn:level16}, this covers all compositions above the horizontal line.
In Theorem \ref{thm:remainingconj}, we then show that the Corteel--Welsh recursions are enough to determine the remaining $H_c(z,q)$ (i.e., for $c$ below the horizontal line).

Investigating deeper in the case of moduli $3k$ (with $k\geq 3$), we use another circle of ideas to prove new sum-product identities,
thereby enlarging the set of known identities in \cite{AndSchWar}.
Here, we observe that after setting $z=1$, all identities where $c_1\geq k$ can be proved to have precise formulas
in terms of the seed conjectures. This proof requires an identity of Weierstra\ss; see Lemma \ref{lem:weierstrass}.
Notably, at $z=1$, a subset of the seed identities was already established in \cite{AndSchWar}, and using these, we
actually prove a few more new sum-product identities.
For instance, at mod $9$, we prove the following new identity \eqref{eqn:mod9missing}:
\begin{align}
\sum_{r_1,r_2,s_1,s_2\geq 0}
\dfrac{q^{r_1^2-r_1s_1+s_1^2+r_2^2-r_2s_2+s_2^2+r_2+s_2}(1-2q^{1+r_1+s_1})}
{(q)_{r_1-r_2}(q)_{s_1-s_2}(q)_{r_2+s_2}(q)_{r_2+s_2+1}}
\qbin{r_2+s_2}{r_2}_{q^3}
=\dfrac{1}{(q)_\infty}\dfrac{1}{\theta(q,q^2,q^2,q^3,q^3;\,q^9)_\infty}.
\end{align}
Somehow, this relation of Weierstra{\ss} that we use seems quite relevant in studying characters of affine Lie algebras, as we had already
observed in the case of level $2$ modules for $\typeA_9^{(2)}$, \cite{KanRus-A92}.

At all moduli, the sum-product versions (obtained after setting $z=1$ and using character formulas to get products) of our seed conjectures can be seen to be appropriate truncations of $\typeA_2^{(1)}$ ``infinite level'' sum-product conjectures, which we provide in Section \ref{sec:infA2}. A subset of these conjectures was 
already proved by Warnaar in \cite{War}. In a way, infinite level, i.e., $\ell=\infty$ means that the affine singular vector $x_\theta(-1)^{\ell+1}$ gets pushed down to infinity and thus vanishes. Consequently, we may alternately view our infinite level conjectures also as corresponding to the principal characters of parabolically induced Verma modules which are also known as generalized Verma modules, see \cite{LepLi-book}.

Actually, as a warm up, in Section \ref{sec:infA1}, we complete the set of infinite level identities for $\typeA_1^{(1)}$ provided by Warnaar \cite{War} to include principal characters of generalized Verma modules induced from all finite dimensional $\mathfrak{sl}_2$-modules. In purely combinatorial terms, we prove infinite level analogues of all Andrews--Gordon (or Andrews--Bressoud) identities:
\begin{align}
\sum_{n_1,n_2,\cdots}
\dfrac{q^{n_1^2+n_2^2+\cdots\,\,\,+n_t+n_{t+1}+\cdots}}{(q)_{n_1-n_2}(q)_{n_2-n_3}\cdots }
=
\sum_{n_1,n_2,\cdots}
\dfrac{1}{(q)_{n_1}}
\left(
\prod_{1\leq i < t} q^{n_i^2}
\qbin{n_i}{n_{i+1}}
\prod_{j \geq t}q^{n_j^2+n_j}
\qbin{n_j}{n_{j+1}}
\right)
=\dfrac{1-q^t}{(q)_\infty}.
\end{align}
In fact, this identity, and its finitization that we provide below in \eqref{eqn:truncinfiniteAG}
were first proved by Warnaar in \cite{War-HL} using Hall--Littlewood polynomials; our methods are different. 
In Section \ref{sec:infA1}, we shall further deduce an identity (see Corollary \ref{cor:initcond}) 
that will be crucially used in latter sections.

Now we come to our proofs of complete sets of identities for moduli $5, 6, 7, 8$ and $10$.

We find and prove certain relations satisfied by the $(z,q)$ sums in Andrews--Schilling--Warnaar form. 
By explicit computations, we then show that the Corteel--Welsh system of recurrences at each of these levels 
is a consequence of these relations, thereby showing that our formulas of $H_c(z,q)$ presented in Conjecture \ref{conj:main} 
are indeed correct.
In Appendix \ref{app:verify}, we detail a simple and fast 
SAGE program that verifies all these proofs very quickly.
This program amounts to nothing more than checking that these recurrences are indeed equal to 
explicitly provided linear combinations of known relations.
This now provides alternate proofs of results in the moduli $5, 7, 8$.
The results in moduli $6$ and $10$ are brand new. 
After setting $z\mapsto 1$ and using the product formula, we can now complete the set of Andrews--Schilling--Warnaar
sum-product identities in moduli $6$ and $10$. 
For instance, the following mod $10$ identity corresponding to the profile $(3,3,1)$ that emerges as a consequence is new:
\begin{align}
\sum_{r_1,r_2,s_1,s_2\geq 0}
&\dfrac{q^{r_1^2-r_1s_1+s_1^2+r_2^2-r_2s_2+s_2^2}
(
q^{-r_1+s_2}-q^{r_2}+q^{1+r_1+r_2+s_2}-q^{1-r_1+r_2+s_1+s_2}-q^{s_1+s_2}
)
}{(q)_{r_1-r_2}(q)_{s_1-s_2}(q)_{r_2}(q)_{s_2}(q)_{r_2+s_2+1}}
\notag\\
&=
\dfrac{1}{(q)_\infty}\dfrac{1}{\theta(q,q,q^2,q^3,q^3,q^5;\,q^{10})}
=\dfrac{1}{(q)_\infty}\chi(\Omega(3\Lambda_0+3\Lambda_1+\Lambda_2)).
\end{align}

For the moduli where we do not have a proof, we conjecture that the relations we have found 
(detailed in Section \ref{sec:higher})
are enough to ultimately 
yield proofs, however, we mention that proving this by explicit calculations may be 
computationally intensive even in moduli $9$ or $11$.

\subsection*{Acknowledgments}
We are immensely grateful to S.\ Ole Warnaar for providing the crucial insight that the 
Andrews--Schilling--Warnaar sum-sides are compatible with the maximum part statistic of 
the cylindric partitions, for supplying Lemma \ref{lem:war_gen}, and for his numerous 
suggestions at various stages of this project.
We additionally thank Ali Uncu for carefully reading the manuscript.
We have benefitted greatly from illuminating discussions with  Sylvie Corteel, Jehanne Dousse, 
Ali Uncu
and Trevor Welsh.
SK is presently supported by Simons Collaboration Grant for Mathematicians \#636937.

\section{Infinite level \texorpdfstring{$\typeA_1^{(1)}$}{A1}}
\label{sec:infA1}
\subsection{Andrews--Gordon identities}
Recall the (combinatorial) Andrews--Gordon identities, \cite[Ch.\ 7]{And-book}.
\begin{thm}
\label{thm:AGcomb}
Let $k\geq 2$ and $1\leq t \leq k$.
The number of partitions of $n$ 
with difference at least $2$ at distance $k-1$ such that
$1$ appears at most $t-1$ times is
the same as number of partitions of $n$ 
into parts not congruent to $0$, $t$ or $2k+1-t$ modulo $2k+1$.
\end{thm}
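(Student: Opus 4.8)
The plan is to pass from the stated combinatorial identity to an equality of generating functions in $q$, prove that analytic equality, and then read the combinatorics back off. Write $A_{k,t}(n)$ for the number of partitions of $n$ obeying the difference condition (difference at least $2$ at distance $k-1$, with $1$ appearing at most $t-1$ times), and $B_{k,t}(n)$ for the number of partitions of $n$ into parts $\not\equiv 0,\,t,\,2k+1-t \pmod{2k+1}$. It suffices to show that $\sum_n A_{k,t}(n)\,q^n = \sum_n B_{k,t}(n)\,q^n$ as formal power series.

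First I would dispose of the product side, which is elementary. By definition, excluding the relevant residue classes from $1/(q)_\infty$ multiplies by the omitted factors, so
\begin{align}
\sum_n B_{k,t}(n)\,q^n = \prod_{\substack{m \geq 1 \\ m \not\equiv 0,\,t,\,2k+1-t \!\!\!\pmod{2k+1}}} \frac{1}{1-q^m} = \frac{(q^t,\,q^{2k+1-t},\,q^{2k+1};\,q^{2k+1})_\infty}{(q)_\infty}.
\end{align}
Applying the Jacobi triple product to the numerator (with base $q^{2k+1}$) rewrites this as $\tfrac{1}{(q)_\infty}\sum_{j\in\ZZ}(-1)^j q^{(2k+1)\binom{j}{2}+tj}$, a single theta series divided by $(q)_\infty$. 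This is the target expression against which the difference side must be matched.

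The heart of the argument is the difference side. Here I would introduce a two-variable refinement $G_{k,t}(x;q) = \sum_\lambda x^{\ell(\lambda)} q^{|\lambda|}$, the sum ranging over all partitions satisfying the distance-$(k-1)$ difference condition with $1$ appearing at most $t-1$ times, where $x$ tracks the number of parts (or, dually, the largest part). Classifying such partitions according to the multiplicity of the smallest part and subtracting a suitable staircase, one obtains a closed system of $q$-difference equations relating the $G_{k,t}$ for $1 \le t \le k$, together with the boundary behaviour at the extreme value of $t$ and the normalization $G_{k,t}(0;q)=1$. I would then verify that the corresponding shifted theta quotients satisfy the identical recurrence and boundary data; since such a linear $q$-difference system has a unique solution in $\ZZ[[x,q]]$ with the prescribed initial values, the two families coincide, and specializing $x=1$ yields $\sum_n A_{k,t}(n)\,q^n$ equal to the product above. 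An equivalent route is to build the Andrews--Gordon multisum directly by iterating Bailey's lemma $k-1$ times from the unit Bailey pair and to recognize its summand as the generating function for the Gordon frequency conditions; this is closer in spirit to the $\typeA_2$ Bailey machinery used elsewhere in the paper.

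The main obstacle is the correct formulation and solution of the $q$-difference system for $G_{k,t}(x;q)$: extracting the recurrence requires a careful combinatorial peeling that keeps precise track of the parameter $t$ (the cap on the multiplicity of $1$) as parts are removed, and matching it to the theta side requires verifying that the shifted theta quotients obey the same relations \emph{and} the same initial conditions. Once the recurrence and its uniqueness are in hand, the remaining steps---the Jacobi triple product evaluation and the specialization $x=1$---are routine.
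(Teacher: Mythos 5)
The paper does not prove this statement: it is the classical Gordon/Andrews--Gordon partition theorem, recalled with a citation to \cite[Ch.~7]{And-book}, and nothing later in the paper depends on reproving it. So there is no in-paper argument to compare against; what can be assessed is whether your outline would, if executed, reproduce the standard proof. Your first route is essentially the argument of Andrews' book: introduce two-variable generating functions (his $J_{k,i}(x;q)$ and their combinatorial counterparts), derive a linear $q$-difference system by classifying partitions according to the number of appearances of $1$ --- the correct surgery is to delete the ones and subtract $1$ from \emph{every} remaining part (a column, not a staircase), which sends the multiplicity parameter $t$ to $k-t+1$ and effects $x\mapsto xq$ --- then verify that the theta quotients satisfy the same system with the same initial data and invoke uniqueness of solutions in $\ZZ[[x,q]]$. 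The product-side computation and the Jacobi triple product step are correct as written.

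Two caveats. First, as submitted this is a plan rather than a proof: the $q$-difference system is never written down, its combinatorial derivation is described only as ``a careful combinatorial peeling,'' and the verification for the theta quotients is asserted rather than performed; you acknowledge as much in your final paragraph. Second, your alternative route via Bailey's lemma proves the analytic multisum--product identity (Theorem \ref{thm:AGq}), not the combinatorial statement: the step ``recognize its summand as the generating function for the Gordon frequency conditions'' is precisely the nontrivial Durfee-dissection argument linking the Andrews--Gordon multisum to the difference-at-distance conditions, and cannot be waved through. If the goal is Theorem \ref{thm:AGcomb} itself, the $q$-difference route is the one to complete.
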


The generating function form of these identities is:

\begin{thm}
\label{thm:AGq}
Let $k\geq 2$ and $1\leq t \leq k$.
We have:
\begin{align}
\sum_{n_1,n_2,\cdots,n_{k-1}}
\dfrac{q^{n_1^2+n_2^2+\cdots+n_{k-1}^2+n_t+n_{t+1}+\cdots+n_{k-1}}}{(q)_{n_1-n_2}(q)_{n_2-n_3}\cdots (q)_{n_{k-2}-n_{k-1}}(q)_{n_{k-1}}}
=\dfrac{(q^t,q^{2k+1-t},q^{2k+1};\,\,q^{2k+1})}{(q)_\infty}.
\end{align}
\end{thm}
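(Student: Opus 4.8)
The plan is to deduce the analytic identity from the combinatorial Theorem~\ref{thm:AGcomb}, which is already at our disposal, so that the only genuine work is a combinatorial reading of the multisum; I would keep a self-contained Bailey-chain argument in reserve. First I would dispose of the product side. By Euler's product, the right-hand side equals $\prod_m (1-q^m)^{-1}$ taken over all positive integers $m\not\equiv 0,t,2k+1-t\pmod{2k+1}$, i.e.\ it is the generating function for partitions into parts avoiding these three residue classes. By Theorem~\ref{thm:AGcomb} this coincides with the generating function for partitions with difference at least $2$ at distance $k-1$ in which $1$ occurs at most $t-1$ times. Hence the statement reduces to the single assertion that the left-hand multisum is the generating function for precisely this latter family.

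To prove that assertion I would use the successive Durfee-square (Durfee dissection) decomposition of a partition. Writing $N_j=n_j+n_{j+1}+\cdots+n_{k-1}$ for the paper's partial sums, one reads $N_j$ as the side length of the $j$-th successive Durfee square; then $q^{N_1^2+\cdots+N_{k-1}^2}$ records the nested squares, while the factors $(q)_{n_j-n_{j+1}}^{-1}$ (equivalently the $q$-binomials, after the change of variables) generate the partitions filling the regions between consecutive squares. The condition that a partition admits at most $k-1$ successive Durfee squares is exactly Gordon's condition of difference at least $2$ at distance $k-1$, and the linear factor $q^{N_t+\cdots+N_{k-1}}$ shifts the smallest squares so as to forbid $1$ from appearing $t$ or more times. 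I expect this verification---matching the dissection statistics to the two combinatorial constraints, with the linear correction implementing exactly the multiplicity bound on the part $1$---to be the main obstacle, since everything else is bookkeeping.

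As a fully analytic alternative I would iterate Bailey's lemma. Starting from the unit Bailey pair relative to $a$ and applying the $\rho,\sigma\to\infty$ form $k-1$ times turns the iterated $\beta$ into the truncated multisum, since each step sends $\alpha_n\mapsto a^nq^{n^2}\alpha_n$ and convolves $\beta$ against $a^jq^{j^2}/(q)_{n-j}$. Letting the outer index tend to infinity in the defining relation of a Bailey pair produces the full $(k-1)$-fold sum over $(q)_\infty$ on one side and $(aq)_\infty^{-1}\sum_r a^{(k-1)r}q^{(k-1)r^2}\alpha_r$ on the other; the very-well-poised $\alpha_r$ collapse to a bilateral sum that Jacobi's triple product identity evaluates as the desired theta quotient. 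The specializations $a=1$ and $a=q$ give the extreme cases $t=k$ and $t=1$ directly.

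The delicate point in this second route is the intermediate range $2\leq t\leq k-1$: the base $a$ weights every summation variable equally, whereas the linear term $n_t+\cdots+n_{k-1}$ is carried only by the inner variables, so one cannot produce it by a single base choice. Handling this---either by seeding the inner part of the chain with a base-$q$ pair and splicing on a base-$1$ tail through a change-of-base step, or by Andrews' original route via the iterate of Watson's transformation---is where I would expect to spend the most effort in the analytic approach. For this reason I would carry out the combinatorial reduction first and treat the Bailey-chain computation as an independent confirmation.
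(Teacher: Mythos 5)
The paper offers no proof of this theorem --- it is recalled verbatim from \cite[Ch.\ 7]{And-book} --- so your argument has to stand on its own, and as written your primary (combinatorial) route has a genuine gap. The successive Durfee dissection does show that the multisum is the generating function for \emph{a} family of partitions, namely those admitting at most $k-1$ successive Durfee squares of which the last $k-t$ are Durfee rectangles. (As a side point, in the paper's normalization the $n_j$ themselves are the sides of the successive squares and the exponent is $\sum_j n_j^2$; your partial sums $N_j$ belong to the other standard form of the multisum, the one with denominators $(q)_{n_1}\cdots(q)_{n_{k-1}}$.) But that Durfee family is \emph{not} the family appearing in Theorem \ref{thm:AGcomb}. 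Your claim that ``at most $k-1$ successive Durfee squares is exactly Gordon's condition of difference at least $2$ at distance $k-1$'' fails at the level of individual partitions: already for $k=t=2$ the partition $2+2$ has a single Durfee square yet its adjacent parts differ by $0$. The Durfee-dissection family and the difference-condition family are merely equinumerous, and that equinumerosity is itself a statement of Rogers--Ramanujan depth --- essentially equivalent to what you are trying to prove. So the reduction to Theorem \ref{thm:AGcomb} is circular, or at best leaves the hard step untouched: you would need either Andrews' separate combinatorial argument (not a Durfee-square argument) that the multisum generates the difference-condition partitions, or an independent proof that the Durfee family matches the residue-avoiding one. The same confusion appears in your reading of the linear term: in the Durfee picture $q^{n_t+\cdots+n_{k-1}}$ converts the last squares into rectangles; it does not ``forbid $1$ from appearing $t$ or more times,'' which is a condition on the other family.

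Your analytic fallback via Bailey chains is the standard proof and would work, but you complete it only for the extreme cases $t=1$ and $t=k$ (bases $a=q$ and $a=1$); the change-of-base splice needed to produce $n_t+\cdots+n_{k-1}$ for $2\leq t\leq k-1$ is precisely the substance of the proof and is explicitly deferred. As written, neither route closes.
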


\subsection{Infinite modulus}

Let $k=\infty$ in Theorem \ref{thm:AGcomb}. Clearly, the restriction 
of having difference at least $2$ at distance $k-1$ now becomes vacuous, and on the LHS
we are counting partitions of $n$ where $1$ appears at most $t-1$ times.
On the RHS, the modulus $2k+1$ is now infinite and thus vacuous.
Thus, we are left with counting partitions where $t$ does not appear as a part.
These two classes of partitions are obviously equinumerous. Indeed, looking at their complements, 
partitions of $n$ in which we \emph{do} have at least $t$ ones is in bijection
with partitions in which $t$ \emph{does} appear as a part.

Thus, the combinatorial versions of the Andrews--Gordon identities with $k=\infty$ are trivial.

However, looking at the $q$--series versions, the identities take the shape ($t\in\ZZ_{>0}\cup\{\infty\}$):
\begin{align}
\sum_{n_1,n_2,\cdots}
\dfrac{q^{n_1^2+n_2^2+\cdots\,\,\,+n_t+n_{t+1}+\cdots}}{(q)_{n_1-n_2}(q)_{n_2-n_3}\cdots }
=
\sum_{n_1,n_2,\cdots}
\dfrac{1}{(q)_{n_1}}
\left(
\prod_{1\leq i < t} q^{n_i^2}
\qbin{n_i}{n_{i+1}}
\prod_{j \geq t}q^{n_j^2+n_j}
\qbin{n_j}{n_{j+1}}
\right)
=\dfrac{1-q^t}{(q)_\infty}.
\label{eqn:InfiniteAG}
\end{align}
Note that for $t=\infty$ we take $q^\infty=0$.
This actually admits a two variable generalization which we shall prove below.

\subsection{Representation-theoretic interpretation}

Another way to look at the RHS of \eqref{eqn:InfiniteAG} is that this is also the 
principal character of  generalized (i.e., parabolically induced) $\typeA_1^{(1)}$ Verma module
at \emph{any} level $\ell$ induced from the irreducible $\mathfrak{sl}_2$ module $L( (t-1) \Lambda_1)$
for $t\in\ZZ_{>0}$.
When $t=\infty$, this is to be thought of as the 
principal character of \emph{any} $\typeA_1^{(1)}$ Verma module (in the usual sense).

\subsection{Proof and other related identities}
If one takes $t=\infty$ in \eqref{eqn:InfiniteAG}, we see:
\begin{align}
\sum_{n_1,n_2,\cdots}
\dfrac{1}{(q)_{n_1}}
\left(
\prod_{i\geq 1} q^{n_i^2}
\qbin{n_i}{n_{i+1}}
\right)
=\dfrac{1}{(q)_\infty}.
\label{eqn:InfiniteAG_l_inf}
\end{align}
This identity can be proved immediately by recognizing the left hand side as the 
generating function of partitions calculated using successive Durfee squares, see Figure \ref{fig:andrewswarnaar_inf}.
\begin{figure}
\begin{tikzpicture}
\draw (0,10) -- (3,10) -- (3,7) -- (0,7) -- (0,10);
\draw (0,7) -- (2,7) -- (2,5) -- (0,5) -- (0,7); 
\draw (0,5) -- (1,5) -- (1,4) -- (0,4) -- (0,5); 
\draw (-0.25,8.5) node{$n_1$};
\draw (1.5,10.25) node{$n_1$};
\draw (-0.25,6) node{$n_2$};
\draw (1,7.25) node{$n_2$};
\draw (-0.25,4.5) node{$n_3$};
\draw (0.5,5.25) node{$n_3$};
\draw[dashed] (3,10) -- (6,10) -- (6,9) -- (5,9) -- (5,8) -- (4,8) -- (4,7)-- (3,7);
\draw[dashed] (3,7) -- (3,6) -- (2.5,6) -- (2.5,5) 
  -- (2,5);
\draw[dashed] (2,5) -- (2,4.5) -- (1.5,4.5) -- (1.5,4)--(1,4);
\draw (1.5,8.5) node{$\mathbf{z^{n_1}q^{n_1^2}}$};
\draw (1,6) node{$\mathbf{z^{n_2}q^{n_2^2}}$};
\draw (5.5,8) node{$\mathbf{\dfrac{1}{(q)_{n_1}}}$};
\draw (3.5,6) node{$\qbin{\mathbf{n_1}}{\mathbf{n_2}}$};
\draw[dotted, thick] (0.5,3.5)	-- (0.5,2);
\end{tikzpicture}
\caption{Successive Durfee squares}
\label{fig:andrewswarnaar_inf}
\end{figure}
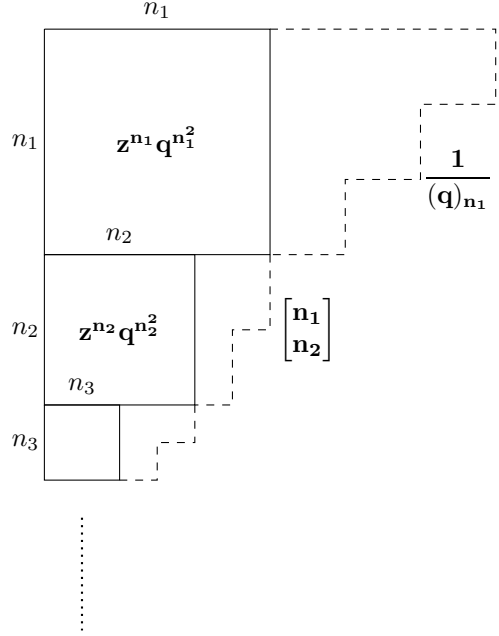
This interpretation in fact proves a $(z,q)$ version of this identity:
\begin{align}
\sum_{n_1,n_2,\cdots}
\dfrac{1}{(q)_{n_1}}
\left(
\prod_{i\geq 1} z^{n_i}q^{n_i^2}
\qbin{n_i}{n_{i+1}}
\right)
=\dfrac{1}{(zq)_\infty}.
\label{eqn:andrewswarnaar_inf}
\end{align}
Warnaar proves this identity in his paper \cite{War}. Let us review his proof
as it will help us in the case for all $t$.

Warnaar considers
\begin{align}
\Phi_n(z;q)=\dfrac{1}{(zq)_n}
\end{align}
and iterates the relation:
\begin{align}
\sum_{j=0}^n  z^jq^{j^2}
\qbin{n}{j}
\Phi_j(z;q)=\Phi_n(z;q).
\label{eqn:basicbin}
\end{align}
If we iterate it finitely many times, we reach:
\begin{align}
\sum_{n_1,n_2,\cdots, n_k=0}^{n_0}
z^{n_1+\cdots+n_k}q^{n_1^2+\cdots+n_k^2}
\qbin{n_0}{n_1}\qbin{n_1}{n_2}\cdots \qbin{n_{k-1}}{n_k}
\dfrac{1}{(zq)_{n_k}}=\dfrac{1}{(zq)_{n_0}}.
\label{eqn:andrewswarnaar}
\end{align}
Letting $k\rightarrow\infty$ (i.e., iterating the relation infinitely many times) 
we see:
\begin{align}
\sum_{n_1,n_2,\cdots=0}^{n_0}
\qbin{n_0}{n_1}
\left(
\prod_{i\geq 1} z^{n_i}q^{n_i^2}
\qbin{n_i}{n_{i+1}}
\right)
=
\sum_{n_1,n_2,\cdots=0}^{n_0}
\left(
\prod_{i\geq 1} z^{n_i}q^{n_i^2}
\qbin{n_{i-1}}{n_{i}}
\right)
=\dfrac{1}{(zq)_{n_0}}.
\label{eqn:andrewswarnaar_inf_fin}
\end{align}
Now with $n_0\rightarrow\infty$, 
we immediately get \eqref{eqn:andrewswarnaar_inf}.

Now we prove \eqref{eqn:InfiniteAG} for all $t$. Note that setting $z=1$ in \eqref{eqn:andrewswarnaar_inf} recovers the $t=\infty$ case of 
\eqref{eqn:InfiniteAG} and 
setting $z=q$ recovers the $t=1$ case.
We start with an easy lemma.
\begin{lem}
\label{lem:bincoeff}
We have:
\begin{align}
\sum_{k=0}^{n}
\dfrac{z^{k}q^{k^2}}{(zq^2;q)_{k}}
\qbin{n}{k}
&= \dfrac{1+zq-zq^{n+1}}{(zq^2)_{n}}=\dfrac{1 -zq^{n+1}-z^2 q^2 (1-q^{n})}{(zq;q)_{n+1}}
\end{align}
\end{lem}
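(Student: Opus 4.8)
The plan is to derive the identity from the basic binomial relation \eqref{eqn:basicbin} by comparing the sum $S_n := \sum_{k=0}^n \frac{z^k q^{k^2}}{(zq^2;q)_k}\qbin{n}{k}$ against what \eqref{eqn:basicbin} yields after the substitution $z\mapsto zq$. First I would record the companion summation I intend to use: replacing $\Phi_j(z;q)$ by $\Phi_j(zq;q)=1/(zq^2;q)_j$ and $z^j$ by $z^jq^j$ in \eqref{eqn:basicbin} gives
\begin{align}
\sum_{k=0}^n \dfrac{z^k q^{k^2+k}}{(zq^2;q)_k}\qbin{n}{k}=\dfrac{1}{(zq^2;q)_n}.
\end{align}
The summand here differs from that of $S_n$ only by the extra factor $q^k$, so the natural move is to subtract.

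Next I would form the difference $S_n-\frac{1}{(zq^2;q)_n}=\sum_{k=0}^n \frac{z^k q^{k^2}(1-q^k)}{(zq^2;q)_k}\qbin{n}{k}$ and apply the elementary identity $(1-q^k)\qbin{n}{k}=(1-q^n)\qbin{n-1}{k-1}$ to pull out $(1-q^n)$ and reindex $k=j+1$. This turns the difference into
\begin{align}
S_n-\dfrac{1}{(zq^2;q)_n}=(1-q^n)\,zq\sum_{j=0}^{n-1}\dfrac{z^j q^{j^2+2j}}{(zq^2;q)_{j+1}}\qbin{n-1}{j}.
\end{align}

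The key step, and the one I expect to be the only real point of care, is to recognize this leftover sum as yet another instance of the companion summation. Using $(zq^2;q)_{j+1}=(1-zq^2)(zq^3;q)_j$ together with the companion identity at base $zq$ (so the Pochhammer base shifts from $zq^2$ to $zq^3$), the sum collapses: the $\frac{1}{1-zq^2}$ prefactor recombines with $(zq^3;q)_{n-1}$ to give exactly $1/(zq^2;q)_n$. Hence $S_n-\frac{1}{(zq^2;q)_n}=\frac{zq(1-q^n)}{(zq^2;q)_n}$, which rearranges at once to the first claimed form $\frac{1+zq-zq^{n+1}}{(zq^2;q)_n}$. The main obstacle is purely the bookkeeping of the index shift and the Pochhammer-base shift; once these are aligned, the collapse is automatic.

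Finally, the equality of the two right-hand sides is a one-line algebraic check: since $(zq;q)_{n+1}=(1-zq)(zq^2;q)_n$, it suffices to verify $(1+zq-zq^{n+1})(1-zq)=1-zq^{n+1}-z^2q^2(1-q^n)$, which holds by direct expansion. As a fallback, should the reindexing prove fiddly, one can instead run a direct induction on $n$ using the $q$-Pascal rule $\qbin{n}{k}=\qbin{n-1}{k-1}+q^k\qbin{n-1}{k}$: the $q^k$-branch reproduces the companion summation while the other branch feeds the inductive hypothesis. The subtraction approach above is preferable since it avoids carrying an induction.
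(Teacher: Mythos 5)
Your proof is correct, and it takes a genuinely different route from the paper's. The paper denotes the left-hand side by $f_n(z,q)$ and computes the difference $f_n(z,q)-f_{n-1}(z,q)$ using the $q$-Pascal rule $\qbin{n}{k}-\qbin{n-1}{k}=q^{n-k}\qbin{n-1}{k-1}$, arriving at the functional recurrence $f_n(z,q)-f_{n-1}(z,q)=\frac{zq^n}{1-zq^2}f_{n-1}(zq,q)$; it then asserts (leaving the verification to the reader) that the claimed closed form satisfies the same initial condition and recurrence. You instead subtract the known evaluation $\sum_k \frac{z^kq^{k^2+k}}{(zq^2;q)_k}\qbin{n}{k}=\frac{1}{(zq^2;q)_n}$, which is \eqref{eqn:basicbin} at $z\mapsto zq$, use the identity $(1-q^k)\qbin{n}{k}=(1-q^n)\qbin{n-1}{k-1}$ to reindex, and recognize the residual sum as \eqref{eqn:basicbin} at $z\mapsto zq^2$; all the steps check out (in particular the Pochhammer bookkeeping $(zq^2;q)_{j+1}=(1-zq^2)(zq^3;q)_j$ and $(1-zq^2)(zq^3;q)_{n-1}=(zq^2;q)_n$ is right). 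What your approach buys is a direct closed-form evaluation with no induction and no need to verify that a two-variable expression satisfies a recurrence involving the shift $z\mapsto zq$ — arguably cleaner for this specific lemma. What the paper's approach buys is extensibility: the same difference-and-shift recurrence technique is exactly what drives the proof of the two-parameter generalization in Lemma \ref{lem:war_gen}, where the symmetry of the recurrences in $n$ and $m$ is the key point, so the paper's choice sets up that later argument.
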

\begin{proof}
Denote the LHS as $f_{n}(z,q)$.
We have $f_0(z,q)=1$ and
\begin{align}
f_{n}(z,q)-&f_{n-1}(z,q)=
\sum_{k=0}^{n}
\dfrac{z^{k}q^{k^2}}{(zq^2;q)_{k}}
\qbin{n}{k}
-
\sum_{k=0}^{n-1}
\dfrac{z^{k}q^{k^2}}{(zq^2;q)_{k}}
\qbin{n-1}{k}
\notag\\
&=
\sum_{k=1}^{n}
\dfrac{z^{k}q^{k^2}}{(zq^2;q)_{k}}
\left(\qbin{n}{k}-\qbin{n-1}{k}\right)
=
q^n\sum_{k=1}^{n}
\dfrac{z^{k}q^{k^2-k}}{(zq^2;q)_{k}}
\qbin{n-1}{k-1}
\notag\\
&
=
zq^n\sum_{k=0}^{n-1}
\dfrac{z^{k}q^{k^2+k}}{(zq^2;q)_{k+1}}
\qbin{n-1}{k}
=
\dfrac{zq^n}{1-zq^2}\sum_{k=0}^{n-1}
\dfrac{z^{k}q^{k^2+k}}{(zq^3;q)_{k}}
\qbin{n-1}{k}
=
\dfrac{zq^n}{1-zq^2}f_{n-1}(zq,q).
\end{align}
It is straightforward to check that the RHS satisfies the same initial condition and recurrence.
\end{proof}

S.\ Ole Warnaar kindly let us know that the lemma above can be generalized.

\begin{lem}[S.\ Ole Warnaar \cite{War-email}]
\label{lem:war_gen}
For $m\geq 0$ we have:
\begin{align}
\sum_{k=0}^n \frac{z^k q^{k^2}}{(zq;q)_{m+k}} \qbin{n}{k}
=\sum_{k=0}^m \frac{z^k q^{k^2}}{(zq;q)_{n+k}} \qbin{m}{k}
    = \frac{1}{(zq;q)_{n+m}}
       \sum_{i=0}^{m}(-z)^i q^{in+\binom{i+1}{2}} \qbin{m}{i}
            \sum_{k=0}^{m-i} z^k q^{k(k+i)} \qbin{m-i}{k}. 
\label{eqn:war_gen}            
\end{align}            
\end{lem}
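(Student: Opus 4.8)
The plan is to prove the three-way equality in \eqref{eqn:war_gen} by treating the two equalities separately. The symmetric-looking first equality (the two single sums with $m,n$ roles swapped) suggests proving a common closed form for both, and the natural candidate is the right-hand triple-sum expression. So I would aim to show that the leftmost sum equals the rightmost expression; by the manifest $m \leftrightarrow n$ symmetry of the final triple sum this is not automatic, so I would need to verify that the rightmost expression is indeed invariant under $m\leftrightarrow n$ (or else prove the middle=right equality independently by the same method).

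\textbf{Reduction via the $q$-binomial theorem.} First I would recognize $1/(zq;q)_{m+k}$ as a specialization of a $q$-Pochhammer and attempt to collapse one of the two summation indices on the right-hand side. Concretely, the inner sum $\sum_{k=0}^{m-i} z^k q^{k(k+i)} \qbin{m-i}{k}$ is, by the finite $q$-binomial (Rothe) formula, equal to $(-zq^{1+i};q)_{m-i}$ up to a power of $q$; pulling this in lets me rewrite the triple sum as a single sum over $i$ of products of theta-like factors over $(zq;q)_{n+m}$. The outer factor $(-z)^i q^{in+\binom{i+1}{2}}\qbin{m}{i}$ is then precisely what one gets from a $q$-Chu--Vandermonde or ${}_2\phi_1$ evaluation. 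Thus the core of the argument is to identify both the leftmost sum and the rightmost expression as the same ${}_2\phi_1$ basic hypergeometric series evaluated at a suitable specialization.

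\textbf{Induction as the robust route.} Rather than rely on locating the exact classical identity, the cleaner and safer approach—modeled directly on the proof of Lemma \ref{lem:bincoeff} above—is induction on $n$ (with $m$ as a parameter carried through, allowing $z \mapsto zq$ shifts). I would set $g_{n,m}(z,q)$ equal to the leftmost sum, compute $g_{n,m}-g_{n-1,m}$ using the Pascal-type recurrence $\qbin{n}{k}-\qbin{n-1}{k}=q^{n-k}\qbin{n-1}{k-1}$ exactly as in Lemma \ref{lem:bincoeff}, shift the index $k\mapsto k+1$, and extract a factor to obtain a relation of the form $g_{n,m}(z,q)-g_{n-1,m}(z,q)=c\cdot g_{n-1,m'}(zq,q)$ for an explicit constant $c$ and shifted parameter. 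Then I would verify the right-hand triple sum satisfies the identical base case ($n=0$, where the left sum is the $m$-indexed sum trivially) and the identical recurrence; matching both forces equality. The middle equality follows either by symmetry of the established closed form or by running the same induction on $m$.

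\textbf{Main obstacle.} The hard part will be the bookkeeping in verifying that the triple-sum right-hand side obeys the same recurrence under $n\mapsto n-1$ together with $z\mapsto zq$: the factors $q^{in}$ and $(zq;q)_{n+m}$ both shift, and showing the telescoping matches the constant $c$ from the left sum requires a careful reindexing of the $i$-sum (likely a shift $i\mapsto i-1$) and an application of the $q$-Pascal rule to $\qbin{m}{i}$ inside the outer sum. I expect this to be where almost all the genuine computation lives; the inner $q$-binomial evaluation and the base case are routine by comparison.
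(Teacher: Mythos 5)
Your induction plan for the first equality is sound and close to the paper's: the paper sets $f_{n,m}$ equal to the left sum, computes $f_{n,m}(z,q)-f_{n-1,m}(z,q)=\tfrac{zq^n}{1-zq}f_{n-1,m}(zq,q)$ and $f_{n,m}(z,q)-f_{n,m-1}(z,q)=\tfrac{zq^m}{1-zq}f_{n,m-1}(zq,q)$, checks the initial conditions $f_{0,m}=f_{m,0}=1/(zq;q)_m$ via \eqref{eqn:basicbin}, and concludes $f_{n,m}=f_{m,n}$ from the symmetry of the system; your version (induct on $n$ and verify the middle sum obeys the same recurrence and base case) works equally well. The problems are with the second equality. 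First, your proposed evaluation of the inner sum $\sum_{k=0}^{m-i}z^kq^{k(k+i)}\qbin{m-i}{k}$ as $(-zq^{1+i};q)_{m-i}$ up to a power of $q$ is false: Rothe's formula requires the exponent to be $\binom{k}{2}$ plus a function linear in $k$, whereas $k(k+i)=\binom{k}{2}+\binom{k+1}{2}+ki$ carries an extra quadratic piece. (Check $m-i=2$, $i=0$: the sum is $1+zq(1+q)+z^2q^4$, while $(-zq;q)_2=1+zq(1+q)+z^2q^3$; no overall power of $q$ reconciles these.) Second, your fallback---verifying that the triple sum satisfies the same $n$-recurrence---is precisely the step you flag as the main obstacle, and you leave it unexecuted.

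The missing idea is to interchange the order of the $i$- and $k$-sums. After swapping, the inner sum over $i$ becomes $\sum_{i=0}^{m-k}(-z)^iq^{i(k+n)+\binom{i+1}{2}}\qbin{m-k}{i}=\sum_{i=0}^{m-k} q^{\binom{i}{2}}\bigl(-zq^{k+n+1}\bigr)^i\qbin{m-k}{i}$, which \emph{is} of Rothe form and evaluates to $(zq^{k+n+1};q)_{m-k}=(zq;q)_{m+n}/(zq;q)_{k+n}$. The triple sum then collapses in one line to $(zq;q)_{n+m}\sum_{k=0}^m\frac{z^kq^{k^2}}{(zq;q)_{n+k}}\qbin{m}{k}$, i.e.\ to the middle sum, with no recurrence needed. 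This is exactly what the paper does, and it also disposes of your worry about whether the right-hand expression is symmetric in $m$ and $n$: once it is identified with the middle sum, the symmetry already established for the single sums takes care of everything.
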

\begin{proof}
Note that taking $m=1$ recovers Lemma \ref{lem:bincoeff}.

Denote the LHS of \eqref{eqn:war_gen} by $f_{n,m}(z,q)$. 
It is clear that 
\begin{align}
f_{0,m}(z,q)=\dfrac{1}{(zq;\,q)_m}.
\label{eqn:war_gen_n=0}
\end{align}
Also by \eqref{eqn:basicbin},
\begin{align}
f_{n,0}(z,q)=\sum_{k=0}^n\frac{z^k q^{k^2}}{(zq;\,q)_{k}} \qbin{n}{k}=
\dfrac{1}{(zq;\,q)_n}.
\label{eqn:war_gen_m=0}
\end{align}
Proceeding as in the proof of Lemma \eqref{lem:bincoeff}, we see:
\begin{align}
f_{n,m}(z,q)-f_{n-1,m}(z,q)=\dfrac{zq^n}{1-zq}f_{n-1,m}(zq,q).
\label{eqn:war_gen_recn}
\end{align}
Also,
\begin{align}
f_{n,m}(z,q)&-f_{n,m-1}(z,q)
=\sum_{k=0}^n z^kq^{k^2}\qbin{n}{k}\left(\dfrac{1}{(zq;\,q)_{m+k}}-\dfrac{1}{(zq;\,q)_{m-1+k}}\right)
=zq^{m}\sum_{k=0}^n \dfrac{z^kq^{k^2+k}}{(zq;\,q)_{m+k}}\qbin{n}{k}\notag\\
&=\dfrac{zq^{m}}{1-zq}\sum_{k=0}^n \dfrac{z^kq^{k^2+k}}{(zq^2;\,q)_{m+k-1}}\qbin{n}{k}
=\dfrac{zq^{m}}{1-zq}f_{n,m-1}(zq,q).
\label{eqn:war_gen_recm}
\end{align}
Since the initial conditions \eqref{eqn:war_gen_n=0}, \eqref{eqn:war_gen_m=0} and the recurrences \eqref{eqn:war_gen_recn}, \eqref{eqn:war_gen_recm} are symmetric in $n$ and $m$, $f_{n,m}=f_{m,n}$. This gives the first equality.
For the rest, we proceed as follows.
\begin{align}
&
\sum_{i=0}^{m}(-z)^i q^{in+\binom{i+1}{2}} \qbin{m}{i}
	\sum_{k=0}^{m-i} z^k q^{k(k+i)} \qbin{m-i}{k}
=
\sum_{k=0}^{m}{z^kq^{k^2}}\qbin{m}{k}
	\sum_{i=0}^{m-k}{(-z)^iq^{i(k+n)+\binom{i+1}{2}}}
    \qbin{m-k}{i}\notag\\
&=
\sum_{k=0}^{m}{z^kq^{k^2}}\qbin{m}{k}
   	(zq^{k+n+1};\,q)_{m-k}
=
\sum_{k=0}^{m}{z^kq^{k^2}}\qbin{m}{k}
   	\dfrac{(zq;\,q)_{m+n}}{(zq;\,q)_{k+n}},
\end{align}
where the second step is due to the $q$-binomial theorem.
This gives the second equality of \eqref{eqn:war_gen}.
\end{proof}

We now have the following identity, which generalizes \eqref{eqn:andrewswarnaar_inf_fin}.
We thank S.\ Ole Warnaar for pointing out that it can also be proved using Hall--Littlewood
polynomials.
\begin{thm}[cf.\ {\cite[p.\ 412]{War-HL}}]
Let $t\in\ZZ_{>0}\cup\{\infty\}$. Then,
\begin{align}
\sum_{n_1,n_2,\cdots=0}^{n_0}
\left(\prod_{1\leq i < t}z^{n_i}q^{n_i^2}
\qbin{n_{i-1}}{n_{i}}
\prod_{j \geq t}z^{n_j}q^{n_j^2+n_j}
\qbin{n_{j-1}}{n_{j}}\right)
= \dfrac{1 -zq^{n_0+1}-z^t q^t (1-q^{n_0})}{(zq;q)_{n_0+1}}.
\label{eqn:truncinfiniteAG}
\end{align}
\end{thm}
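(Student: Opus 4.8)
The plan is to induct on $t$, peeling off the outermost summation variable. Write $L_t(n_0)$ for the left-hand side of \eqref{eqn:truncinfiniteAG}. For $t\geq 2$ the $i=1$ factor in the product is of the first type, namely $z^{n_1}q^{n_1^2}\qbin{n_0}{n_1}$; after pulling it out and reindexing $n_i\mapsto n_{i-1}$, the remaining sum over $n_2,n_3,\dots$ is exactly $L_{t-1}(n_1)$. This gives the recursion
\[
L_t(n_0)=\sum_{n_1=0}^{n_0} z^{n_1}q^{n_1^2}\qbin{n_0}{n_1}\, L_{t-1}(n_1),\qquad t\geq 2.
\]
For the base case $t=1$ every factor is of the second type, and since $z^{n_j}q^{n_j^2+n_j}=(zq)^{n_j}q^{n_j^2}$, the sum is precisely \eqref{eqn:andrewswarnaar_inf_fin} with $z$ replaced by $zq$; hence $L_1(n_0)=1/(zq^2;q)_{n_0}$, which matches the claimed right-hand side once its numerator $1-zq^{n_0+1}-zq(1-q^{n_0})$ is simplified to $1-zq$.

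For the inductive step I would substitute the formula for $L_{t-1}$ and split its numerator as $\frac{1}{(zq;q)_{n_1}}-z^{t-1}q^{t-1}\frac{1-q^{n_1}}{(zq;q)_{n_1+1}}$. The first piece, summed against $z^{n_1}q^{n_1^2}\qbin{n_0}{n_1}$, collapses to $1/(zq;q)_{n_0}$ by the basic iteration \eqref{eqn:basicbin}. The second piece is the delicate one: writing $1-q^{n_1}$ as a difference produces two sums. The first is evaluated by Lemma \ref{lem:bincoeff} (equivalently the $m=1$ case of Lemma \ref{lem:war_gen}) to give $\frac{1+zq-zq^{n_0+1}}{(zq;q)_{n_0+1}}$; the second, after factoring $z^{n_1}q^{n_1^2+n_1}=(zq)^{n_1}q^{n_1^2}$, is evaluated by \eqref{eqn:basicbin} with $z\mapsto zq$ to give $\frac{1}{(zq;q)_{n_0+1}}$. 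Their difference is $\frac{zq(1-q^{n_0})}{(zq;q)_{n_0+1}}$, so assembling the pieces yields
\[
L_t(n_0)=\frac{1}{(zq;q)_{n_0}}-z^{t-1}q^{t-1}\cdot\frac{zq(1-q^{n_0})}{(zq;q)_{n_0+1}}
=\frac{1-zq^{n_0+1}-z^tq^t(1-q^{n_0})}{(zq;q)_{n_0+1}},
\]
which is exactly the claimed formula.

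The hard part will be this second piece of the inductive step: the extra $q^{n_1}$ coming from $1-q^{n_1}$ forces one of the two resulting sums into a shifted ($z\mapsto zq$) instance of the basic relation, so the bookkeeping must track the $(1-zq)$ prefactors (via $(zq;q)_{n+1}=(1-zq)(zq^2;q)_n$) carefully. Everything else is a direct application of \eqref{eqn:basicbin} together with Lemma \ref{lem:bincoeff}. Finally, the case $t=\infty$ follows by letting $t\to\infty$ under the convention $q^\infty=0$, which annihilates the $z^tq^t$ term and recovers \eqref{eqn:andrewswarnaar_inf_fin}.
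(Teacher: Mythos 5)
Your proposal is correct and follows essentially the same route as the paper's proof: induct on $t$, peel off the $n_1$-sum to get the recursion $L_t(n_0)=\sum_{n_1}z^{n_1}q^{n_1^2}\qbin{n_0}{n_1}L_{t-1}(n_1)$, and resolve the inductive step by splitting the numerator and applying \eqref{eqn:basicbin} (once as stated, once with $z\mapsto zq$) together with Lemma \ref{lem:bincoeff}. The only cosmetic difference is that you start the induction at $t=1$ (absorbing the paper's separate $t=2$ base case into the general step), and all your intermediate evaluations check out.
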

\begin{proof}
Note that \eqref{eqn:truncinfiniteAG} with $t=\infty$ corresponds to \eqref{eqn:andrewswarnaar_inf_fin} and 
\eqref{eqn:truncinfiniteAG} with 
$t=1$
corresponds to \eqref{eqn:andrewswarnaar_inf_fin} with $z\mapsto zq$.
To prove \eqref{eqn:truncinfiniteAG} for $2\leq t<\infty$, we induct on $t$.
First, let $t=2$. We have, using \eqref{eqn:andrewswarnaar_inf_fin} with $z\mapsto zq$, 
\begin{align}
\sum_{n_1,n_2,\cdots=0}^{n_0}
&z^{n_1}q^{n_1^2}
\qbin{n_{0}}{n_{1}}
\prod_{j \geq 2}z^{n_j}q^{n_j^2+n_j}
\qbin{n_{j-1}}{n_{j}}
=\sum_{n_1=0}^{n_0}z^{n_1}q^{n_1^2}
\qbin{n_{0}}{n_{1}}
\left(
\sum_{n_2,n_3\cdots=0}^{n_1}
\prod_{j \geq 2}z^{n_j}q^{n_j^2+n_j}
\qbin{n_{j-1}}{n_{j}}
\right)
\nonumber\\
&=\sum_{n_1=0}^{n_0}
\dfrac{z^{n_1}q^{n_1^2}}{(zq^2;q)_{n_1}}
\qbin{n_{0}}{n_{1}}
=\dfrac{1 -zq^{n_0+1}-z^2 q^2 (1-q^{n_0})}{(zq;q)_{n_0+1}}.
\label{eqn:l=2}
\end{align}
Now, for $t>2$, we consider:
\begin{align}
\sum_{n_1,n_2,\cdots=0}^{n_0}
&\prod_{1\leq i < t+1} z^{n_{i}}q^{n_i^2}
\qbin{n_{i-1}}{n_{i}}
\prod_{j \geq t+1}z^{n_j}q^{n_j^2+n_j}
\qbin{n_{j-1}}{n_{j}}\nonumber\\
&=
\sum_{n_1=0}^{n_0}
z^{n_{1}}q^{n_1^2}
\qbin{n_0}{n_1}
\left(
\sum_{n_2,n_3,\cdots=0}^{n_1}
\prod_{2\leq i < t+1} z^{n_{i}}q^{n_i^2}
\qbin{n_{i-1}}{n_{i}}
\prod_{j \geq t+1}z^{n_j}q^{n_j^2+n_j}
\qbin{n_{j-1}}{n_{j}}
\right)
\nonumber\\
&=\sum_{n_1=0}^{n_0}
z^{n_{1}}q^{n_1^2}
\qbin{n_0}{n_1}
\dfrac{1 -zq^{n_1+1}-z^t q^t (1-q^{n_1})}{(zq;q)_{n_1+1}}
\nonumber\\
&=
\left(\sum_{n_1=0}^{n_0}
\dfrac{z^{n_{1}}q^{n_1^2}}{(zq)_{n_1}}
\qbin{n_0}{n_1}\right)
-\left(\dfrac{z^t q^t}{(1-zq)}
\sum_{n_1=0}^{n_0}
\dfrac{z^{n_{1}}q^{n_1^2}}{(zq^2)_{n_1}}\qbin{n_0}{n_1}\right)
+\left(\dfrac{z^t q^t}{(1-zq)}
\sum_{n_1=0}^{n_0}
\dfrac{z^{n_{1}}q^{n_1^2+n_1}}{(zq^2)_{n_1}}\qbin{n_0}{n_1}\right)
\nonumber\\
&=\dfrac{1}{(zq)_{n_0}}
-
\dfrac{z^t q^t(1+zq-zq^{n_0+1})} {(zq;q)_{n_0+1}}
+\dfrac{z^t q^t}{(zq)_{n_0+1}}
\nonumber\\
&=\dfrac{1-zq^{n_0+1} -z^{t+1}q^{t+1}(1-q^{n_0})}{(zq;q)_{n_0+1}}.
\end{align}
\end{proof}
Taking $n_0\rightarrow\infty$, we deduce a $(z,q)$--version of \eqref{eqn:InfiniteAG}:
\begin{cor}
Let $t\in\ZZ_{>0}\cup\{\infty\}$. Then,
\begin{align}
\sum_{n_1,n_2,\cdots}
\left(\prod_{1\leq i < t}z^{n_i}q^{n_i^2}
\qbin{n_{i-1}}{n_{i}}
\prod_{j \geq t}z^{n_j}q^{n_j^2+n_j}
\qbin{n_{j-1}}{n_{j}}\right)
= \dfrac{1-z^t q^t}{(zq;q)_{\infty}}.
\label{eqn:truncinfiniteAGzq}
\end{align}
\end{cor}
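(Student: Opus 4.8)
\section*{Proof proposal}

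The plan is to obtain \eqref{eqn:truncinfiniteAGzq} directly from the finitized identity \eqref{eqn:truncinfiniteAG} by letting $n_0\to\infty$, working throughout in the ring of formal power series in $q$ (with $z$ carried along as a parameter), where all limits are understood coefficientwise. Fix $t\in\ZZ_{>0}\cup\{\infty\}$; the case $t=\infty$ has the empty product $\prod_{j\ge t}$ and $z^tq^t=0$, so it reduces to \eqref{eqn:andrewswarnaar_inf} and may be treated by the same limiting argument, and I therefore focus on $t$ finite.

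For the right-hand side I would simply compute the limit of
\[
\dfrac{1 -zq^{n_0+1}-z^t q^t (1-q^{n_0})}{(zq;q)_{n_0+1}}.
\]
Expanding the numerator as $1-z^tq^t-zq^{n_0+1}+z^tq^{t+n_0}$, the two terms carrying $q^{n_0+1}$ or $q^{t+n_0}$ tend to $0$ coefficientwise as $n_0\to\infty$, while $(zq;q)_{n_0+1}\to(zq;q)_\infty$. Hence the right-hand side converges to $\dfrac{1-z^tq^t}{(zq;q)_\infty}$, which is exactly the claimed right-hand side.

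For the left-hand side, letting $n_0\to\infty$ makes the upper bound $n_i\le n_0$ vacuous, so the sum ranges over all tuples $(n_1,n_2,\dots)$ of nonnegative integers, and the only surviving dependence on $n_0$ sits in the leading Gaussian binomial $\qbin{n_0}{n_1}$. Writing $\qbin{n_0}{n_1}=\dfrac{(q^{n_0-n_1+1};q)_{n_1}}{(q;q)_{n_1}}$ and noting that each factor of the numerator tends to $1$, one gets $\qbin{n_0}{n_1}\to\dfrac{1}{(q)_{n_1}}$; this is precisely the leading factor $\tfrac{1}{(q)_{n_1}}$ appearing in \eqref{eqn:InfiniteAG}, and with this reading the summand of \eqref{eqn:truncinfiniteAGzq} is exactly the $n_0\to\infty$ limit of the summand of \eqref{eqn:truncinfiniteAG}.

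The one point requiring care -- and the only genuine obstacle -- is justifying the interchange of the limit $n_0\to\infty$ with the infinite summation on the left. This is handled by the formal power series topology: for each fixed $N$, the factor $q^{n_1^2+n_2^2+\cdots}$ forces all but finitely many tuples $(n_1,n_2,\dots)$ to contribute only in $q$-degrees exceeding $N$, so the coefficient of $q^N$ in the left-hand side is a finite sum; for $n_0$ large this finite set of contributing tuples already satisfies $n_i\le n_0$, and the low-degree coefficients of $\qbin{n_0}{n_1}$ have stabilized to those of $\tfrac{1}{(q)_{n_1}}$. Thus every coefficient of $q^N$ stabilizes, the limit may be taken term by term, and equating the two limits yields \eqref{eqn:truncinfiniteAGzq}.
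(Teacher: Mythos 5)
Your proposal is correct and is exactly the paper's argument: the corollary is obtained by letting $n_0\to\infty$ in \eqref{eqn:truncinfiniteAG}, which the paper states in one line. You simply supply the (routine but valid) coefficientwise justification for the limit on both sides, including the correct reading of $\qbin{n_0}{n_1}\to 1/(q)_{n_1}$.
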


It is natural to ask if one can find companions to \eqref{eqn:andrewswarnaar} analogous to Andrews--Gordon identities.
Indeed, we have the following theorem which can be viewed
as a generalization of \eqref{eqn:andrewswarnaar} with $z\mapsto q$.
\begin{thm}
Let $k\geq 1$ and $1\leq t\leq k+1$. We have:
\begin{align}
\sum_{n_1,n_2,\cdots n_k=0}^{n_0}
q^{n_1^2+\cdots+n_k^2\,\,\,+n_t+n_{t+1}+\cdots+n_k}
\qbin{n_0}{n_1}\qbin{n_1}{n_2}\cdots \qbin{n_{k-1}}{n_k}
\dfrac{1}{(q^2;\,q)_{n_k}}=\dfrac{1-q^t-q^{n_0+1}(1-q^{t-1})}{(q)_{n_0+1}}.
\label{eqn:afinAG}
\end{align}
\end{thm}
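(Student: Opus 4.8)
The plan is to recognize \eqref{eqn:afinAG} as nothing more than the $z=1$ specialization of the already-established truncated identity \eqref{eqn:truncinfiniteAG}, once the infinite tail of summation indices $n_{k+1},n_{k+2},\dots$ has been summed away. Concretely, I would start from \eqref{eqn:truncinfiniteAG} with $z$ set to $1$ (legitimate since $t\le k+1<\infty$, so no denominator degenerates), whose left-hand side is $\sum_{n_1,n_2,\dots=0}^{n_0}\prod_{1\le i<t} q^{n_i^2}\qbin{n_{i-1}}{n_i}\prod_{j\ge t} q^{n_j^2+n_j}\qbin{n_{j-1}}{n_j}$, and then freeze $n_1,\dots,n_k$ and carry out the remaining sum over $n_{k+1},n_{k+2},\dots$ first.

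The key point is that the hypothesis $t\le k+1$ guarantees that every tail index $n_j$ with $j\ge k+1$ lies in the range $j\ge t$, hence carries the linear term $n_j$. Reindexing $m_i=n_{k+i}$ with $m_0=n_k$, the tail sum is exactly $\sum_{m_1,m_2,\dots=0}^{m_0}\prod_{i\ge1} q^{m_i^2+m_i}\qbin{m_{i-1}}{m_i}$, which is \eqref{eqn:andrewswarnaar_inf_fin} with $z\mapsto q$ and top variable $m_0=n_k$, and so evaluates to $1/(q^2;q)_{n_k}$. This is precisely the tail factor appearing in \eqref{eqn:afinAG}. What is left of the head is then the finite sum over $n_1,\dots,n_k$ with factor $q^{\,n_1^2+\cdots+n_k^2}\,q^{\,n_t+\cdots+n_k}\qbin{n_0}{n_1}\cdots\qbin{n_{k-1}}{n_k}$, i.e.\ the summand displayed in \eqref{eqn:afinAG}.

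It then only remains to match right-hand sides. Setting $z=1$ in \eqref{eqn:truncinfiniteAG} gives $\big(1-q^{n_0+1}-q^t(1-q^{n_0})\big)/(q;q)_{n_0+1}$, and expanding both numerators shows $1-q^{n_0+1}-q^t+q^{t+n_0}=1-q^t-q^{n_0+1}(1-q^{t-1})$, which is the numerator claimed in \eqref{eqn:afinAG}; the denominators already agree. This completes the reduction, and as a sanity check the case $k=1$ collapses back to Lemma \ref{lem:bincoeff} with $z\in\{1,q\}$.

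I expect the only genuinely delicate point to be the justification of summing the infinite tail before the finite head: this is a routine Fubini interchange for formal power series, valid because each factor $q^{m_i^2+m_i}$ forces large tail indices to contribute only in high $q$-degree, but it should be stated explicitly. The role of the hypothesis $t\le k+1$ deserves emphasis, since this is exactly what makes the tail homogeneous (every remaining variable of squares-plus-linear type) and hence summable by \eqref{eqn:andrewswarnaar_inf_fin}; this is where the upper bound on $t$ enters. A direct induction on $t$ in the spirit of the proof of \eqref{eqn:truncinfiniteAG} is also possible, recovering the base case $t=1$ from \eqref{eqn:andrewswarnaar} at $z=q$, but it forces one to evaluate sums such as $\sum_n q^{n^2+n}(q^2;q)_n^{-1}\qbin{n_0}{n}$ whose $q$-shifted denominators do not line up cleanly with Lemma \ref{lem:bincoeff}; the tail-summation route sidesteps this entirely and is the one I would follow.
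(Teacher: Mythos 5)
Your argument is correct, and it takes a genuinely different route from the paper. The paper proves \eqref{eqn:afinAG} by a self-contained double induction: the $t=1$ case comes from \eqref{eqn:andrewswarnaar} at $z=q$, the case $k=1$, $t=2$ from Lemma \ref{lem:bincoeff} at $z=1$, and the inductive step peels off the $n_1$-sum, writes the inner sum as $f_{k-1,t-1}(n_1)$, and re-evaluates using \eqref{eqn:basicbin} with $z=1$ and $z=q$ together with Lemma \ref{lem:bincoeff} at $z=1$. You instead specialize the already-established two-variable identity \eqref{eqn:truncinfiniteAG} at $z=1$ and collapse the infinite tail $n_{k+1},n_{k+2},\dots$ via \eqref{eqn:andrewswarnaar_inf_fin} at $z\mapsto q$ with top index $n_k$; since \eqref{eqn:truncinfiniteAG} is proved in the paper without reference to \eqref{eqn:afinAG}, there is no circularity, and your algebra matching the two numerators is right. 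What your route buys is brevity and a conceptual explanation of the hypothesis $t\le k+1$: it is precisely what makes every tail variable carry the $q^{n_j^2+n_j}$ weight, so the tail is exactly the geometric-type sum evaluating to $1/(q^2;q)_{n_k}$. What the paper's route buys is independence from \eqref{eqn:truncinfiniteAG} and, more importantly, it exhibits the recursion $f_{k,t}(n_0)=\sum_{n_1}q^{n_1^2}\qbin{n_0}{n_1}f_{k-1,t-1}(n_1)$ explicitly, which is the structural fact exploited again in Corollary \ref{cor:initcond}. Two small points: you should indeed record the formal-power-series justification for resumming the tail first (each coefficient of $q^N$ receives finitely many contributions, so the regrouping is legitimate), and your closing sanity check for $k=1$ is slightly off — the $t=1$ subcase is \eqref{eqn:basicbin} (equivalently \eqref{eqn:andrewswarnaar}) at $z=q$, not Lemma \ref{lem:bincoeff} at $z=q$, whose denominator $(zq^2;q)_k$ would become $(q^3;q)_k$ rather than the required $(q^2;q)_k$.
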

\begin{proof}
We induct on $k$.
For any $k$, $t=1$ case is simply \eqref{eqn:andrewswarnaar} with $z\mapsto zq$.
Lemma \eqref{lem:bincoeff} with $z=1$ provides the case of $k=1$ and $t=2$.
So now let $k>1$ and let $1<t\leq k+1$. Call the LHS $f_{k,l}(n_0)$.
We have:
\begin{align}
f_{k,l}(n_0)&=
\sum_{n_1=0}^{n_0}
{q^{n_1^2}}\qbin{n_0}{n_1}
\left(
\sum_{n_2,n_3\cdots n_k=0}^{n_1}
q^{n_2^2+\cdots+n_k^2\,\,\,+n_t+n_{t+1}+\cdots+n_k}
\qbin{n_0}{n_1}\qbin{n_1}{n_2}\cdots \qbin{n_{k-1}}{n_k}
\dfrac{1}{(q^2;\,q)_{n_k}}
\right)\notag\\
&=\sum_{n_1=0}^{n_0}
{q^{n_1^2}}\qbin{n_0}{n_1}f_{k-1,t-1}(n_1)\\
&=\sum_{n_1=0}^{n_0}
{q^{n_1^2}}\qbin{n_0}{n_1}\dfrac{1-q^{t-1}-q^{n_1+1}(1-q^{t-2})}{(q)_{n_1+1}}
\notag\\
&=\sum_{n_1=0}^{n_0}
\dfrac{q^{n_1^2}}{(q)_{n_1}}\qbin{n_0}{n_1}
-\dfrac{q^{t-1}}{1-q}
\left(\sum_{n_1=0}^{n_0}
\dfrac{{q^{n_1^2}}}{(q^2)_{n_1}}\qbin{n_0}{n_1}
-
\sum_{n_1=0}^{n_0}
\dfrac{{q^{n_1^2+n_1}}}{(q^2)_{n_1}}\qbin{n_0}{n_1}
\right)
\\
&=\dfrac{1}{(q)_{n_0}} - \dfrac{q^{t-1}}{1-q}\left(\dfrac{1+q-q^{n+1}}{(q^2;\,q)_{n_0}}-\dfrac{1}{(q^2;\,q)_{n_0}} \right)
\notag\\
&=\dfrac{1-q^t-q^{n_0+1}(1-q^{t-1})}{(q)_{n_0+1}}.
\end{align}
Here, in the penultimate step, we use \eqref{eqn:basicbin} with $z=1$ and $z=q$ and Lemma \ref{lem:bincoeff} with $z=1$.
\end{proof}

Note that this theorem with $n_0\rightarrow\infty$ and $k\rightarrow\infty$ also gives \eqref{eqn:InfiniteAG}.

The following corollary is now an easy consequence. We will use it below 
to check that our sum-sides satisfy initial conditions as required by the Corteel--Welsh recursion.

\begin{cor}
\label{cor:initcond}
Denote the $n_0\rightarrow\infty$ limit of the LHS of \eqref{eqn:afinAG}
by $X(k,t)$.
For $1\leq t\leq k$, we have:
\begin{align}
X(k,1)=X(k,t+1)-q\cdot X(k,t)=\dfrac{1}{(q^2;\,q)_\infty}.
\end{align}
\end{cor}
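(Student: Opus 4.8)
The plan is to reduce the entire statement to the closed-form evaluation already furnished by the preceding theorem, namely that the finite sum appearing in \eqref{eqn:afinAG} equals $\dfrac{1-q^t-q^{n_0+1}(1-q^{t-1})}{(q)_{n_0+1}}$. Since $X(k,t)$ is defined as the $n_0\to\infty$ limit of exactly this quantity, almost all of the work has been done; what remains is to pass to the limit and then to read off the two asserted equalities by elementary algebra.

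First I would take the limit $n_0\to\infty$ of the closed form. Working in the ring of formal power series in $q$, the factor $q^{n_0+1}$ tends to $0$ coefficient-wise while $(q)_{n_0+1}\to (q)_\infty$, so the expression converges to
\[
X(k,t)=\dfrac{1-q^t}{(q)_\infty}.
\]
It is worth emphasizing that the right-hand side is independent of $k$ (the original closed form already was, for all admissible $t$); this is precisely what makes the claimed relations hold uniformly in $k$.

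Next I would verify the two equalities by direct substitution. For the first, the factorization $(q)_\infty=(1-q)(q^2;q)_\infty$ gives
\[
X(k,1)=\dfrac{1-q}{(q)_\infty}=\dfrac{1-q}{(1-q)(q^2;q)_\infty}=\dfrac{1}{(q^2;q)_\infty}.
\]
For the middle expression, substituting the closed forms for $X(k,t+1)$ and $X(k,t)$ (both legitimate since $1\leq t\leq k$ forces $t+1\leq k+1$) yields
\[
X(k,t+1)-q\cdot X(k,t)=\dfrac{(1-q^{t+1})-q(1-q^t)}{(q)_\infty}=\dfrac{1-q}{(q)_\infty}=\dfrac{1}{(q^2;q)_\infty},
\]
which closes the chain of equalities.

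I do not anticipate a genuine obstacle here. The only step meriting any care is the passage $n_0\to\infty$, but because the theorem already supplies an explicit closed form, this amounts to the routine observation that each coefficient of the power series stabilizes once $n_0$ is sufficiently large, so no analytic convergence issue arises. The corollary is thus a purely formal consequence of the theorem together with the elementary identity $(q)_\infty=(1-q)(q^2;q)_\infty$.
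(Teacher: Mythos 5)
Your proposal is correct and is exactly the argument the paper intends: the paper declares the corollary "an easy consequence" of the preceding theorem without writing out details, and the intended details are precisely your computation — let $n_0\to\infty$ in the closed form to get $X(k,t)=(1-q^t)/(q)_\infty$, then use $(q)_\infty=(1-q)(q^2;q)_\infty$. Your remark that $t\leq k$ keeps $t+1$ within the theorem's admissible range $1\leq t\leq k+1$ is a worthwhile check that the paper leaves implicit.
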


\section{Infinite level \texorpdfstring{$\typeA_2^{(1)}$}{A2}}
\label{sec:infA2}

\subsection{The conjectures} 
Let us first fix the notation.
Suppose we are given two semi-infinite vectors of non-negative integers --
\begin{align}
\rho&=[\rho_1,\rho_2,\dots] \in\ZZ_{\geq 0}^\infty,\\
\sigma&=[\sigma_1,\sigma_2,\dots] \in\ZZ_{\geq 0}^\infty.
\end{align}
Define:
\begin{align}
S_\infty(\rho\mid \sigma)(z,q)=
\dfrac{1}{1-q}
\sum_{r,s\in\ZZ_{\geq 0}^\infty} z^{r_1}q^{\rho\cdot r+\sigma\cdot s}q^{\sum_{i\geq 1}r_i^2-r_is_i+s_i^2}
\prod_{i\geq 1} \dfrac{1}{(q)_{r_i-r_{i+1}}(q)_{s_i-s_{i+1}}}	
\label{eqn:Sinfty}
\end{align}
where 
\begin{align}
r&=[r_1,r_2,\dots] \in\ZZ_{\geq 0}^\infty,\\
s&=[s_1,s_2,\dots] \in\ZZ_{\geq 0}^\infty.
\end{align}
have finitely many non-zero entries.
We will explain the reason for the factor $1/(1-q)$ in Remark \ref{rem:A2infWar}.

We clearly have:
\begin{align}
S_\infty(\rho\mid\sigma)(1,q)=S_\infty(\sigma\mid\rho)(1,q).
\end{align}

We will require certain special vectors --
\begin{align}
\mathrm{e}_j = [\underbrace{0,\cdots, 0}_{j},1,1,\cdots],
\end{align}
where $j\in\ZZ_{>0}\cup\{\infty\}$.
For convenience, we will define:
\begin{align}
\mathrm{e}_{-1} = [2,1,1,1,\cdots].
\end{align}

Finally, for $a,b\in\ZZ_{\geq 0}\cup\{\infty\}$, define:
\begin{align}
H_{(\infty,a,b)}
&=
\begin{cases}
S_\infty(\mathrm{e}_a\mid \mathrm{e}_b)
- qS_\infty(\mathrm{e}_{a-1}\mid \mathrm{e}_{b-1}) & a,b>0\\
S_\infty(\mathrm{e}_a\mid \mathrm{e}_0) & b=0\\
S_\infty(\mathrm{e}_0\mid \mathrm{e}_b)
-q(1-z)S_\infty(\mathrm{e}_{-1}\mid \mathrm{e}_{b-1}) & a=0, b\neq 0.
\end{cases}
\label{eqn:Hinf}
\end{align}
Note that we have:
\begin{align}
H_{(\infty,a,b)}(1,q)=H_{(\infty,b,a)}(1,q).
\end{align}

\begin{conj}
We conjecture that for $a,b\in\ZZ_{\geq 0}\cup\{\infty\}$
\begin{align}
H_{(\infty,a,b)}(1,q)
=\dfrac{(1-q^{a+1})(1-q^{b+1})(1-q^{a+b+2})}{(q)_\infty^3}.
\label{eqn:infinitelevelA2}
\end{align}
\end{conj}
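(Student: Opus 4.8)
The plan is to imitate, one rank up, the infinite-level $\typeA_1^{(1)}$ argument just completed: there the entire identity was generated by iterating the single local relation \eqref{eqn:basicbin} with seed $1/(zq)_n$, the closed form coming from Lemma \ref{lem:bincoeff} and its two-index extension Lemma \ref{lem:war_gen}. Since \eqref{eqn:infinitelevelA2} is evaluated at $z=1$, where $z$ decorates only the top index $r_1$ (the maximum-part statistic), I would set $z=1$ from the outset; the $q(1-z)$ term in the third case of \eqref{eqn:Hinf} then drops, so it suffices to compute $S_\infty(\mathrm{e}_a\mid\mathrm{e}_b)(1,q)$ from \eqref{eqn:Sinfty} and assemble $H_{(\infty,a,b)}$. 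Concretely I would introduce a doubly-truncated, finite-row analogue of $S_\infty$, cap the outer indices $r_1,s_1$ by parameters $n_0,m_0$, and look for a two-index seed $\Phi_{n,m}(q)$ and a self-referential step
\[
\sum_{r\le n,\ s\le m} q^{\,r^2-rs+s^2+(\cdots)}\,\qbin{n}{r}\qbin{m}{s}\,\Phi_{r,s}(q)=\Phi_{n,m}(q),
\]
whose semi-infinite iteration rebuilds the multisum and whose $n_0,m_0\to\infty$ limit collapses to the expected $\la{sl}_3$ factor.

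The crux is the $\typeA_2$ analogue of \eqref{eqn:basicbin} and Lemma \ref{lem:war_gen}: a closed form for this coupled double sum, which I would attempt to prove by a double induction on the two truncation parameters, mirroring the symmetric recurrences \eqref{eqn:war_gen_recn}--\eqref{eqn:war_gen_recm}. The difficulty is that the cross term $-r_is_i$ in the $\typeA_2$ quadratic form ties the two families of indices together at \emph{every} row, so the local step is genuinely the $\typeA_2$ Bailey lemma of Andrews--Schilling--Warnaar rather than a bare $q$-binomial theorem. Finding a seed $\Phi_{n,m}$ that both rebuilds $S_\infty$ under iteration and telescopes cleanly is where I expect the main obstacle to lie.

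Granting the finite closed form, I would send the number of rows and then $n_0,m_0$ to infinity to reach $S_\infty(\mathrm{e}_a\mid\mathrm{e}_b)(1,q)$, and then assemble $H_{(\infty,a,b)}$ as in \eqref{eqn:Hinf}. The correction $-q\,S_\infty(\mathrm{e}_{a-1}\mid\mathrm{e}_{b-1})$ is, I believe, designed precisely to cancel the extra contributions produced by the one-step shift $\mathrm{e}_a\to\mathrm{e}_{a-1}$ (which changes the exponent by $r_a$), yielding a genuine telescoping. The boundary cases $a=0$ or $b=0$ should reduce to the $\typeA_1^{(1)}$ results \eqref{eqn:andrewswarnaar_inf_fin}, \eqref{eqn:truncinfiniteAGzq} and Corollary \ref{cor:initcond}; after telescoping, the surviving numerator should assemble into $(1-q^{a+1})(1-q^{b+1})(1-q^{a+b+2})$, the principally specialized Weyl--Kac numerator attached to the $\la{sl}_3$-module $L(a\omega_1+b\omega_2)$, with $(q)_\infty^{3}$ coming from the Verma part---which is exactly \eqref{eqn:infinitelevelA2}.

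An alternative route, closer to the rest of the paper, would bypass the explicit seed altogether: prove the $\typeA_2$ relations of Section \ref{sec:higher} for the $(z,q)$ sums, pass to the $\ell\to\infty$ limit of the Corteel--Welsh recursions, check that the product on the right of \eqref{eqn:infinitelevelA2} satisfies the limiting system, and pin the solution down using Corollary \ref{cor:initcond} for the initial conditions. The obstacles there are justifying the $\ell\to\infty$ limit of the recursive system and its uniqueness of solutions; these, together with the intricacy of the $\typeA_2$ local step above, are presumably why \eqref{eqn:infinitelevelA2} is recorded here only as a conjecture.
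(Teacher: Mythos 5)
There is no proof to compare against: the statement is stated and left as a \emph{conjecture} in the paper. The only portion the paper actually establishes is Remark \ref{rem:A2infWar}, which verifies the three boundary cases $(a,b)\in\{(0,0),(\infty,0),(\infty,\infty)\}$ by recognizing $S_\infty(\mathrm{e}_0\mid\mathrm{e}_0)$, $S_\infty(\mathrm{e}_\infty\mid\mathrm{e}_0)$ and $(1-q)S_\infty(\mathrm{e}_\infty\mid\mathrm{e}_\infty)$ at $z=1$ as $n_0,m_0,k\to\infty$ limits of Warnaar's sums $F^{(-1)}$ from \cite{War-A2further} and then invoking the evaluations (7.11) and (8.5) of that paper. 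Your proposal does not prove the conjecture either, and the gap is exactly the one you flag yourself: everything rests on a hypothetical $\typeA_2$ seed $\Phi_{n,m}$ and a closed form for the coupled double sum whose iteration would rebuild $S_\infty(\mathrm{e}_a\mid\mathrm{e}_b)$, and no such object is exhibited. The cross terms $-r_is_i$ in every row destroy the factorization that makes the $\typeA_1^{(1)}$ argument via \eqref{eqn:basicbin} and Lemma \ref{lem:war_gen} go through; producing the required local step is tantamount to extending the $\typeA_2$ Bailey machinery of \cite{AndSchWar} and \cite{War-A2further} to arbitrary $(a,b)$, which is precisely the open problem. The same is true of your alternative route: the relations of Section \ref{sec:higher} are only proved to imply the Corteel--Welsh recursions for small moduli, and neither the $\ell\to\infty$ limit of that system nor uniqueness of its solutions is available. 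So the proposal is a reasonable research plan, but as a proof it is incomplete at its central step --- consistent with the fact that the paper records \eqref{eqn:infinitelevelA2} only as a conjecture.

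One concrete piece of progress you could have extracted with the tools at hand is the content of Remark \ref{rem:A2infWar}: for $(a,b)=(0,0)$, $(\infty,0)$, $(\infty,\infty)$ your ``seed'' exists in the literature in the guise of Warnaar's supernomial Bailey pairs, and matching \eqref{eqn:Sinfty} against the limits of \cite[Eq.\ (7.10)]{War-A2further} (which also explains the prefactor $1/(1-q)$ in \eqref{eqn:Sinfty}) yields the product $(1-q^{a+1})(1-q^{b+1})(1-q^{a+b+2})/(q)_\infty^3$ in those cases. Checking that your predicted numerator specializes correctly there would at least have confirmed the normalization of \eqref{eqn:Hinf}.
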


\begin{rem}
\label{rem:A2infWar}
For $(a,b)\in\{(0,0), (\infty,0), (\infty,\infty)\}$, the conjecture above holds due to results in \cite{War-A2further}.
\begin{enumerate}
	\item For $(a,b)=(0,0)$, $S_\infty(\mathrm{e}_0\mid\mathrm{e}_0)(1,q)$ is in fact the $n_0,m_0,k\rightarrow\infty$ limit of $F_{n_0,m_0;k,1,1}^{(-1)}(1,q)$
	defined in \cite[Eq.\ (7.10)]{War-A2further}. Note that for infinite $k$, \cite[Eq.\ (7.10)]{War-A2further} 
	is a sum defined over infinite
	sequences  $n_1\geq n_2\geq\dots$, $m_1\geq m_2\geq\dots$ each of which ends with an infinite number of contiguous
	zeroes.
	Due to these zeros, the factor $1/(q)_{n_k+m_k+1}$ that appears in \cite[Eq.\ (7.10)]{War-A2further} evaluates to $1/(q)_{1}$
	when $k\rightarrow\infty$. This is precisely the factor that appears outside of the RHS in \eqref{eqn:Sinfty}.
	Using \cite[Eqn.\ (7.11)]{War-A2further} and \cite[Eq.\ (8.5)]{War-A2further} with $z=q,w=q^2$, we now see that
	the required limit evaluates to $(q)^{-1}_\infty(q^2;q)_\infty^{-1}(q^3;q)_\infty^{-1}$.
	\item For $(a,b)=(\infty,0)$,
	$S_\infty(\mathrm{e}_\infty\mid\mathrm{e}_0)(1,q)$ is the $n_0,m_0,k\rightarrow\infty$ limit of $F_{n_0,m_0;k,k+1,1}^{(-1)}(1,q)$ defined in \cite[Eq.\ (7.10)]{War-A2further}.
	Using \cite[Eqn.\ (7.11)]{War-A2further} and \cite[Eq.\ (8.5)]{War-A2further} with $z=1,w=q$, this evaluates to $(q)_\infty^{-2}(q^2;q)_\infty^{-1}$,
	as required.
	\item For $(a,b)=(\infty,\infty)$, we see that
	$H_{(\infty,\infty,\infty)}(1,q)=(1-q)S_\infty(\mathrm{e}_\infty\mid\mathrm{e}_\infty)(1,q)$, which in turn can be written as
	$n_0,m_0,k\rightarrow \infty$ limit of $F^{(-1)}_{n_0,m_0,k}(1,q)$ from \cite[Eq.\ (7.1)]{War-A2further}.
	Using \cite[Prop. 7.1]{War-A2further} and \cite[Eqn.\ (8.5)]{War-A2further} with $z=w=1$, we see that the required
	limit evaluates to $(q)_\infty^{-3}$. 
\end{enumerate}

\end{rem}

\subsection{Representation-theoretic interpretation} 
In analogy to the discussion about $\typeA_1^{(1)}$ above, 
we may also think of the RHS of \eqref{eqn:infinitelevelA2} as $(q)_\infty^{-1}$ times the principal character of generalized (i.e., parabolic) Verma module for $\typeA_2^{(1)}$
at \emph{any} level $\ell$
induced from the irreducible $\mathfrak{sl}_3$ module $L(a\Lambda_1+b\Lambda_2)$.
If $a=b=\infty$, we think of it as
$(q)_\infty^{-1}$ times the principal character of \emph{any} Verma module (in the usual sense) for $\typeA_2^{(1)}$.

\section{Cylindric partitions and Corteel--Welsh recursion}
\label{sec:CWrec}
The principal characters of standard $\typeA_r^{(1)}$ modules are known to be closely related 
to generating functions of cylindric partitions, see \cite{Tin-crystal}, \cite{FodWel}. 
Here, we only review the necessary background.
For more details, see \cite{GesKra}, \cite{FodWel}, \cite{CorDouUnc}, etc.

By a composition of $\ell$, we mean an (ordered) sequence of non-negative integers $(c_0,\dots,c_r)$ such that $\ell=c_0+c_1+\cdots+c_r$.
\begin{defi}
Consider a sequence $\Lambda = (\lambda^{(0)},\dots,\lambda^{(r)})$ where each $\lambda^{(j)}$ is a partition
$\lambda^{(j)}=\lambda_1^{(j)}+\lambda_2^{(j)}+\cdots$ arranged in a weakly descending order. 
We assume that each $\lambda^{(j)}$ continues indefinitely with only finitely many non-zero entries and ends with an infinite sequence of zeros.
We say that $\Lambda$ is a cylindric partition with profile $c$ if for all $i$ and $j$,
\begin{align}
\lambda_j^{(i)}\geq \lambda_{j+c_{(i+1)}}^{(i+1)}\quad\mathrm{and}\quad \lambda_j^{(r)}\geq \lambda_{j+c_0}^{(0)}.
\end{align}
\end{defi}

Let $F_c(z,q)=F_{(c_0,c_1,\dots,c_r)}(z,q)$ be the generating function of cylindric partitions of profile $c$, where $z$ correponds
to the maximum part statistic and $q$ corresponds to the total weight of the cylindric partition.
In other words, denoting the set of cylindric partitions of profile $c$ by $\cC_c$, we have:
\begin{align}
F_c(z,q) = \sum_{\Lambda\in\cC_c} z^\mathrm{max(\Lambda)}q^{\mathrm{wt}(\Lambda)}.
\end{align}
Due to the definition of cylindric partitions, these generating functions have an obvious cyclic symmetry:
\begin{align}
F_{(c_0, c_1, \dots, c_r)}(z,q) = F_{(c_1, c_2, \dots, c_r, c_0)}(z,q).
\label{eqn:Frotsym}
\end{align}

We have the following product formula for $F_c(1,q)$, due to Borodin \cite{Bor}. However, the same formula
can be quickly deduced from a much earlier work of Gessel--Krattenthaler \cite{GesKra} as exhibited in \cite{FodWel}.
\begin{thm}
Let $c=(c_0,c_1,\dots,c_r)$ be a composition of $\ell$. Set $m=r+1+\ell$. Then we have:
\begin{align}
F_c(1,q)=
\dfrac{(q^m;q^m)_\infty^r}{(q)_\infty^{r+1}}\prod_{1\leq i<j\leq r+1}\theta(q^{j-i + c_i+c_{i+1}+\cdots+c_{j-1}};\,q^m)
\label{eqn:cylindricprod}
\end{align}
\end{thm}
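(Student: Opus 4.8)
The plan is to realize $F_c(1,q)$ as the partition function of a \emph{periodic Schur process} and to evaluate it by the free-fermion (vertex-operator) calculus, which is essentially Borodin's route; the Gessel--Krattenthaler route encodes the same data as non-intersecting lattice paths on a cylinder and evaluates a determinant. I would first unfold the cyclic interlacing conditions: a cylindric partition of profile $c=(c_0,\dots,c_r)$ is equivalent to a bi-infinite, $(r+1)$-periodic sequence of ordinary partitions $\cdots \prec \mu^{(0)} \prec \mu^{(1)} \prec \cdots$ in which consecutive partitions interlace (the direction of each $\prec$, and the column shifts, being dictated by the $c_i$), subject to a periodicity condition that reproduces the cylinder. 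The weight $\mathrm{wt}(\Lambda)$ becomes a linear statistic on the $\mu^{(i)}$, and $\mathrm{max}(\Lambda)$ drops out once we set $z=1$.

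Next I would introduce the half-vertex operators $\Gamma_+(x),\Gamma_-(x)$ on the fermionic Fock space, whose matrix elements pick out the interlacing relations $\mu\prec\lambda$ and $\lambda\prec\mu$ respectively, and which obey the single commutation relation
\begin{align}
\Gamma_+(x)\,\Gamma_-(y) = \frac{1}{1-xy}\,\Gamma_-(y)\,\Gamma_+(x),
\end{align}
together with $\Gamma_{\pm}(x)\,q^{L_0} = q^{L_0}\,\Gamma_{\pm}(q^{\mp 1}x)$, where $L_0$ is the energy grading. Summing over all admissible interlacing sequences turns the generating function into the finite-temperature trace
\begin{align}
F_c(1,q)=\tr\!\left(q^{L_0}\,\Gamma_{\epsilon_0}(\cdots)\,\Gamma_{\epsilon_1}(\cdots)\cdots\Gamma_{\epsilon_r}(\cdots)\right),
\end{align}
in which the word $\epsilon_0\cdots\epsilon_r$ of signs records the up/down pattern prescribed by the profile and the arguments are monomials in $q$ determined by the $c_i$. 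I would then normal-order by pushing every $\Gamma_+$ past every $\Gamma_-$ using the commutation relation, producing scalar prefactors $\prod(1-q^{\,\cdot})^{-1}$, and use cyclicity of the trace to wrap operators around the cycle; the winding that cyclicity forces is precisely what converts the bare geometric factors into theta functions $\theta(q^{\,j-i+c_i+\cdots+c_{j-1}};q^m)$ with modulus $m=r+1+\ell$, while the vacuum normalization of the trace supplies the $(q^m;q^m)_\infty$ and $(q)_\infty$ factors.

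The hard part will be the exponent bookkeeping in this trace evaluation: tracking the exact $q$-power contributed by each interlacing step so that the shifts $j-i+c_i+\cdots+c_{j-1}$ emerge for every pair $1\le i<j\le r+1$, and checking that the prefactors reassemble into exactly $(q^m;q^m)_\infty^{r}/(q)_\infty^{r+1}$ rather than some neighboring normalization. Equivalently, in the Gessel--Krattenthaler formulation the obstacle condenses into a single determinant evaluation: the periodic Lindström--Gessel--Viennot lemma writes $F_c(1,q)$ as the determinant of a matrix whose entries are theta-type series (each a sum over windings of the cylinder), and one must recognize this as an affine type-$\typeA_r$ Weyl denominator, i.e.\ a Frobenius-type theta determinant, to factor it into the stated product. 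I would complete the argument by specializing that theta-determinant identity and matching its shift data against the profile $c$.
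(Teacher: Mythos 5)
The paper does not actually prove this theorem: it is quoted as a known result, attributed to Borodin \cite{Bor}, with the remark that it can alternatively be extracted from Gessel--Krattenthaler \cite{GesKra} as carried out in \cite{FodWel}. So there is no in-paper argument to compare yours against; the relevant comparison is with the cited sources. Your outline correctly identifies both of those routes --- the periodic Schur process evaluated as a trace of half-vertex operators $\Gamma_\pm$ on Fock space (Borodin's method), and the cylindric Lindstr\"om--Gessel--Viennot determinant with theta-series entries (the Gessel--Krattenthaler method) --- and you correctly locate where the modulus $m=r+1+\ell$ and the theta functions come from (the winding forced by cyclicity of the trace, respectively the sum over windings in each path-counting entry).

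That said, as it stands this is a proof plan rather than a proof. The two steps you defer --- the exponent bookkeeping in the normal-ordering of the trace, and the recognition of the resulting determinant as an affine type-$\typeA_r$ Weyl denominator that factors into $\prod_{i<j}\theta(q^{\,j-i+c_i+\cdots+c_{j-1}};q^m)$ times $(q^m;q^m)_\infty^{r}/(q)_\infty^{r+1}$ --- are precisely where all the content of the identity lives; everything before that is setup. If you intend this as a self-contained proof you must carry out one of the two computations in full (the trace route is probably the cleaner one: after commuting all $\Gamma_+$ past all $\Gamma_-$ around the cycle, each pair $(i,j)$ contributes a geometric progression in $q^m$ whose product telescopes into the stated theta function, and the trace over the charge-zero sector of $q^{L_0}$ supplies the $(q^m;q^m)_\infty$ powers). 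If instead you are content to cite the result, that is exactly what the paper does, and your sketch is a faithful summary of the cited proofs.
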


We also have the following connection to representation theory.
\begin{thm}[\cite{FodWel}, \cite{War}]
Let $L(\lambda)$ be a standard module of $\typeA_r^{(1)}$ and let 
 $\chi(L(\lambda))$ denote the principally specialized character, i.e.,
\begin{align}
\chi(L(\lambda))= \left.\left(e^{-\lambda} \ch L(\lambda)\right)\right\vert_{e^{-\alpha_0},\dots,e^{-\alpha_r}\mapsto q}.
\end{align}
With $\Lambda_j$ denoting the fundamental weights of $\typeA_r^{(1)}$, consider the level $\ell$ dominant integral weight 
\begin{align}
\lambda=c_0\Lambda_0+c_1\Lambda_1+\cdots+c_r\Lambda_r.
\end{align}
Then,
\begin{align}
F_c(1,q)
&=\dfrac{1}{(q^{r+1};\,q^{r+1})_\infty}\chi(L(\lambda))
=\dfrac{1}{(q)_\infty}\dfrac{\chi(L(\lambda))}{\chi(L(\Lambda_0))}
=\dfrac{1}{(q)_\infty}{\chi(\Omega(\lambda))},
\label{eqn:cylindricrep}
\end{align}
where $\chi(\Omega(\lambda))$ is the principal character 
of the standard module $L(\lambda)$.

Moreover, when $r+1$ and $\ell$ are coprime, we have:
\begin{align}
F_c(1,q)=\dfrac{1}{(q)_\infty}\ch\left(({\mathcal{W}_{r+1}})^{r+1,\ell+r+1}_{0,\lambda}\right),
\end{align}
where in the RHS we see the appearance of $\mathcal{W}_{r+1}$ minimal model character with 
parameters $(p,p')=(r+1,\ell+r+1)$,
central charge $r\left(1-\frac{(r+2)\ell^2}{\ell+r+1}\right)$
and highest weight labelled by $(0,\lambda)$.
\end{thm}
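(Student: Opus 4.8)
The plan is to observe first that the three equalities in \eqref{eqn:cylindricrep} collapse to a single product identity. For $\typeA_r^{(1)}$, the factor $F$ that is deleted in passing from the principally specialized character to the principal character is the character of the principal Heisenberg subalgebra, whose generators occupy every positive principal degree $n$ with $(r+1)\nmid n$; hence
\begin{align}
F=\chi(L(\Lambda_0))=\prod_{\substack{n\geq 1\\ (r+1)\nmid n}}\frac{1}{1-q^{n}}=\frac{(q^{r+1};q^{r+1})_\infty}{(q)_\infty}.
\end{align}
Given this value, the second equality of \eqref{eqn:cylindricrep} is the rewriting $\tfrac{1}{(q^{r+1};q^{r+1})_\infty}=\tfrac{1}{(q)_\infty}\,\chi(L(\Lambda_0))^{-1}$, and the third is the defining relation $\chi(\Omega(\lambda))=\chi(L(\lambda))/F$. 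Everything therefore reduces to the first equality, i.e.\ to proving $\chi(L(\lambda))=(q^{r+1};q^{r+1})_\infty\,F_c(1,q)$.

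To establish this I would put $\chi(L(\lambda))$ into closed product form. Starting from the Weyl--Kac character formula and imposing the principal specialization $e^{-\alpha_0},\dots,e^{-\alpha_r}\mapsto q$, the alternating numerator is evaluated by Lepowsky's numerator formula as a finite product of theta functions at base $q^{m}$ with $m=\ell+r+1$, whose arguments are governed by the pairings $\langle\lambda+\rho,\alpha^\vee\rangle$ over the positive roots $\alpha$ of the underlying finite $\typeA_r$; combined with the principal specialization of the denominator (the case $\lambda=0$ of the same formula, i.e.\ the affine Macdonald identity), this presents $\chi(L(\lambda))$ as a ratio of such a theta product to powers of $(q)_\infty$ and $(q^{m};q^{m})_\infty$.

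The crux, and the step I expect to be the principal obstacle, is to identify this product with $(q^{r+1};q^{r+1})_\infty$ times Borodin's formula \eqref{eqn:cylindricprod} factor by factor. The decisive bookkeeping is that the exponent $j-i+c_i+c_{i+1}+\cdots+c_{j-1}$ inside $\theta(\,\cdot\,;q^{m})$ in \eqref{eqn:cylindricprod} is exactly $\langle\lambda+\rho,\alpha_{ij}^\vee\rangle$ for the finite positive root $\alpha_{ij}=\alpha_i+\cdots+\alpha_{j-1}$: the partial sum $c_i+\cdots+c_{j-1}$ records $\langle\lambda,\alpha_{ij}^\vee\rangle$ (only $c_1,\dots,c_r$ enter, since $\lambda$ projects to $c_1\Lambda_1+\cdots+c_r\Lambda_r$ on the finite part), while the shift $j-i$ records $\langle\rho,\alpha_{ij}^\vee\rangle=j-i$. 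One must then check that the remaining prefactors coincide: the $r$ copies of $(q^{m};q^{m})_\infty$, the $(r+1)$ copies of $(q)_\infty^{-1}$, and the extra $(q^{r+1};q^{r+1})_\infty$ must reproduce the denominator contribution computed above. I would carry this out by indexing both products over the pairs $1\le i<j\le r+1$ and verifying equality termwise; the $\typeA_1$ case, where \eqref{eqn:cylindricprod} reproduces the Rogers--Ramanujan products up to $1/(q)_\infty$, is a reassuring special case, but the simultaneous matching of theta arguments and $(q)_\infty$-powers for general $r$ is where the genuine labor lies.

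For the final assertion, assume $\gcd(r+1,\ell)=1$. I would invoke the realization of the principal $\mathcal{W}_{r+1}$ minimal model with $(p,p')=(r+1,\ell+r+1)$ as the quantum Drinfeld--Sokolov reduction of $\typeA_r^{(1)}$ at level $\ell$ (equivalently, via the relevant coset), under which the principal character $\chi(\Omega(\lambda))$ corresponds to the minimal-model module labelled by $(0,\lambda)$. The central-charge check is immediate: substituting $N=r+1$, $p=r+1$, $p'=\ell+r+1$ into the principal $\mathcal{W}_N$ minimal-model central charge $(N-1)\bigl(1-N(N+1)\tfrac{(p-p')^2}{pp'}\bigr)$ yields $r\bigl(1-\tfrac{(r+2)\ell^2}{\ell+r+1}\bigr)$, as required. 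Since the minimal-model character is itself a ratio of theta functions, the identity $\ch\bigl((\mathcal{W}_{r+1})^{r+1,\ell+r+1}_{0,\lambda}\bigr)=(q)_\infty F_c(1,q)$ then follows by comparison with the product already obtained, completing the argument.
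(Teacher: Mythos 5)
The paper does not actually prove this theorem: it is quoted as background from \cite{FodWel} and \cite{War}, so there is no internal proof to compare your attempt against. Your outline is, however, precisely the route those references take. The reduction of the three displayed equalities to the single identity $\chi(L(\lambda))=(q^{r+1};q^{r+1})_\infty\,F_c(1,q)$ via $F=\chi(L(\Lambda_0))=(q^{r+1};q^{r+1})_\infty/(q)_\infty$ is correct (this value of $F$ is the Lepowsky--Wilson/Lepowsky--Milne evaluation of the principally specialized basic character, equivalently the character of the principal Heisenberg subalgebra), and your key identification is the right one: in Borodin's product \eqref{eqn:cylindricprod} the exponent $j-i+c_i+\cdots+c_{j-1}$ is exactly $\langle\lambda+\rho,\alpha_{ij}^\vee\rangle$ for the finite positive root $\alpha_{ij}=\alpha_i+\cdots+\alpha_{j-1}$, since $\Lambda_0$ restricts to zero on the finite Cartan and $\langle\rho,\alpha_{ij}^\vee\rangle=j-i$. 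Your central-charge verification for the $\mathcal{W}_{r+1}$ minimal model also checks out. The one piece you explicitly defer --- the termwise matching of the $(q)_\infty$-- and $(q^m;q^m)_\infty$--prefactors between the specialized Weyl--Kac/Lepowsky numerator formula and Borodin's product --- is genuinely where the labor sits, but it is a finite, routine bookkeeping computation rather than a conceptual gap; I see nothing in the plan that would fail.
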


Given all of this, we refer to $r+1$ as the rank, $\ell$ as the level of the module, $c$ as the highest weight of the standard module in question (by a slight abuse of notation), and finally $m$ as the modulus of the identities -- this
is the period of the product in \eqref{eqn:cylindricprod}.

Upon setting $z=1$,  the cyclic symmetry can be upgraded to a dihedral symmetry:
\begin{align}
F_{(c_0, c_1, c_2, \dots, c_r)}(1,q) = F_{(c_r, c_{r-1}, c_{r-2}, \dots, c_0)}(1,q).
\end{align}
The proof of this can be obtained using the product formula \eqref{eqn:cylindricprod} above, see \cite{CorDouUnc} for the case
$r=2$.
Another way to deduce it is by using \eqref{eqn:cylindricrep} and noting that 
that this symmetry corresponds to the invariance of the principal characters of standard $\typeA_r^{(1)}$
modules under the outer automorphism group, which in this case is the dihedral group of symmetries of the Dynkin
diagram of $\typeA_r^{(1)}$.

Now let us recall the Corteel--Welsh recursion that governs $F_c(z,q)$. 
Actually, it will be beneficial for us to introduce the following notations,
\begin{align}
G_c(z,q)&=(zq;\,q)_\infty F_c(z,q),\\
H_c(z,q)&=\dfrac{G_c(z,q)}{(q;\,q)_\infty}=\dfrac{(zq;\,q)_\infty}{(q;\,q)_\infty} F_c(z,q).
\end{align}
Note that we have:
\begin{align}
H_c(1,q)=F_c(1,q)=
\dfrac{(q^m;q^m)_\infty^r}{(q)_\infty^{r+1}}\prod_{1\leq i<j\leq r+1}\theta(q^{j-i + c_i+c_{i+1}+\cdots+c_{j-1}};\,q^m)
=\dfrac{1}{(q)_\infty}{\chi(\Omega(c_0\Lambda_0+\cdots +c_r \Lambda_r))}.
\label{eqn:Hcprod}
\end{align}

Now, given a composition $c$, we define the following:
\begin{align}
I_c &= \{ 0\leq i\leq r\,\vert\,c_i\neq 0\}.
\end{align}
With $c_{-1}=c_r$ and a subset $\phi\subsetneq J\subseteq I_c$, define a new composition $c(J)=(c_0(J),c_1(J),\dots,c_r(J))$ by:
\begin{align}
c_i(J) = 
\begin{cases}
c_i-1 & i\in J\,\,\mathrm{and}\,\,(i-1)\not\in J,\\
c_i+1 & i\not\in J\,\,\mathrm{and}\,\,(i-1)\in J,\\
c_i & \mathrm{otherwise}.
\end{cases}
\end{align}
It is not hard to check that if $c$ is a composition of $\ell$, then so is $c(J)$ for any $\phi\subsetneq J\subseteq I_c$.

We then have:
\begin{thm}[\cite{CorWel}]
Fix $r, \ell$.
The functions $G_c$ for all compositions $c$ of $\ell$ with length $r+1$ are the unique solutions to the following 
(finite) system of functional equations
and the initial conditions:
\begin{align}
G_c(z,q) &= \sum_{\phi\subsetneq J\subseteq I_c}(-1)^{|J|-1}(zq;\,q)_{|J|-1}G_{c(J)}(zq^{|J|};\,q),
\quad
G_c(0,q) = 1,\quad G_c(z,0)  = 1.
\end{align}
\end{thm}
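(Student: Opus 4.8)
The plan is to separate the statement into three independent parts: the functional equation, the two initial conditions, and uniqueness. For the functional equation I would first clear the common factor and rewrite the asserted recursion in terms of $F_c$ rather than $G_c$. Since $G_c=(zq;q)_\infty F_c$ and $G_{c(J)}(zq^{|J|},q)=(zq^{|J|+1};q)_\infty F_{c(J)}(zq^{|J|},q)$, the telescoping identity $(zq;q)_{|J|-1}(zq^{|J|+1};q)_\infty=(zq;q)_\infty/(1-zq^{|J|})$ (we are simply reinstating the missing $t=|J|$ factor) turns the claim into the equivalent form
\[
F_c(z,q)=\sum_{\emptyset\subsetneq J\subseteq I_c}\frac{(-1)^{|J|-1}}{1-zq^{|J|}}\,F_{c(J)}(zq^{|J|},q).
\]
All manipulations are carried out in the ring of formal power series in $z$ and $q$.

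The combinatorial heart is to prove this $F$-form directly from the definition $F_c(z,q)=\sum_{\Lambda\in\cC_c}z^{\max\Lambda}q^{\mathrm{wt}(\Lambda)}$. Expanding $\tfrac{1}{1-zq^{|J|}}=\sum_{k\ge0}z^kq^{k|J|}$ and using $F_{c(J)}(zq^{|J|},q)=\sum_{\Lambda'}z^{\max\Lambda'}q^{|J|\max\Lambda'+\mathrm{wt}(\Lambda')}$, the $J$-summand collects, with sign $(-1)^{|J|-1}$, all pairs $(\Lambda',M)$ with $\Lambda'$ a cylindric partition of profile $c(J)$ and $M\ge\max\Lambda'$, each weighted by $z^Mq^{|J|M+\mathrm{wt}(\Lambda')}$. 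I would then exhibit the matching operation on the profile-$c$ side: given $\Lambda\in\cC_c$ with $\max\Lambda=M$, strip the $|J|$ maximal parts (each equal to $M$) sitting at the boundary components indexed by $J$; this produces a cylindric partition of profile \emph{exactly} $c(J)$ of weight $\mathrm{wt}(\Lambda)-|J|M$, the boundary rule defining $c(J)$ being precisely the shift of offsets forced by deleting those extremal parts. Summing over $\emptyset\subsetneq J\subseteq I_c$ with the inclusion--exclusion sign then counts each $\Lambda$ once, the over-counting arising exactly when the maximum is attained on several components. \emph{This step is the main obstacle}: one must verify that the stripping map respects all interlacing inequalities, lands in profile exactly $c(J)$, and that the signed sum telescopes to multiplicity one.

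The initial conditions are immediate. Setting $z=0$ retains only the cylindric partition with maximum part $0$, namely the empty one, so $F_c(0,q)=1$; as $(zq;q)_\infty|_{z=0}=1$ this gives $G_c(0,q)=1$. Setting $q=0$ retains only the weight-$0$ (empty) partition, so $F_c(z,0)=1$, and $(zq;q)_\infty|_{q=0}=1$ gives $G_c(z,0)=1$.

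For uniqueness I would argue by induction on the $q$-order that the functional equation together with the initial conditions pins down every coefficient. Write $G_c(z,q)=\sum_{n\ge0}g_{c,n}(z)q^n$. The condition $G_c(z,0)=1$ forces $g_{c,0}(z)=1$, while $G_c(0,q)=1$ gives the crucial vanishing $g_{c,n}(0)=0$ for all $n\ge1$ and every profile. Now extract $[q^n]$ from the functional equation. Because each summand carries the shift $z\mapsto zq^{|J|}$ with $|J|\ge1$, the term $g_{c(J),m}(zq^{|J|})$ contributes to order $q^n$ only through its $z^p$-coefficients with $m+p|J|=n$; the sole contribution that could reach $m=n$ is $p=0$, that is $g_{c(J),n}(0)$, which vanishes for $n\ge1$. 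Since moreover $(zq;q)_{|J|-1}=1+O(q)$, this shows that $[q^n]$ of the right-hand side depends only on coefficients $g_{c(J),m}$ with $m<n$ (the self-referential terms $c(J)=c$ included). Hence $g_{c,n}(z)$ is determined explicitly by strictly lower-order data for all profiles simultaneously, and induction on $n$ yields uniqueness. As a consistency check at $n=0$, one has $\sum_{\emptyset\subsetneq J\subseteq I_c}(-1)^{|J|-1}=1$, which reproduces $g_{c,0}=1$.
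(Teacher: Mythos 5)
The paper does not prove this theorem; it is quoted from Corteel and Welsh with a citation to \cite{CorWel}, so there is no internal proof to compare against. On its own merits, your reduction from the $G$-form to the $F$-form is correct (the telescoping $(zq;q)_{|J|-1}(zq^{|J|+1};q)_\infty=(zq;q)_\infty/(1-zq^{|J|})$ is exactly right), your treatment of the two initial conditions is correct, and your uniqueness argument by induction on the $q$-order is complete and valid: the shift $z\mapsto zq^{|J|}$ with $|J|\ge 1$, together with the vanishing $g_{c',n}(0)=0$ for $n\ge 1$ supplied by $G_{c'}(0,q)=1$, does force every order-$n$ coefficient to be determined by strictly lower-order data across all profiles simultaneously.

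The genuine gap is the one you flagged yourself: the combinatorial proof of the $F$-recursion, which is the entire content of the theorem. As written, your stripping map is not defined for every nonempty $J\subseteq I_c$ --- one can only remove a part equal to $M$ from component $i$ if $\lambda^{(i)}_1=M$ --- so the bookkeeping has to run in the other direction: for each pair $(\Lambda',M)$ with $\Lambda'\in\cC_{c(J)}$ and $M\ge\max\Lambda'$, prepend a new first part $M$ to each component indexed by $J$, verify that the interlacing inequalities for profile $c$ on the augmented object are exactly the inequalities for profile $c(J)$ on $\Lambda'$ together with $M\ge(\text{the relevant neighbouring parts})$ (this is precisely what the definition of $c(J)$ encodes, and where $J\subseteq I_c$ is needed), and then show that a fixed $\Lambda\in\cC_c$ with $\max\Lambda=M$ arises from exactly the nonempty subsets $J$ of the set $K(\Lambda)$ of components from which such a maximal part can be peeled, whence $\sum_{\emptyset\ne J\subseteq K(\Lambda)}(-1)^{|J|-1}=1$ gives multiplicity one. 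None of this is carried out in your proposal; everything else in your write-up is routine by comparison. If you intend a self-contained proof you must supply that verification; otherwise citing \cite{CorWel}, as the paper does, is the appropriate course.
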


\begin{cor}
Fix $r, \ell$.
The functions $H_c$ for all compositions $c$ of $\ell$ with length $r+1$ are the unique solutions to the following 
(finite) system of functional equations
and the initial conditions:
\begin{align}
H_c(z,q) &= \sum_{\phi\subsetneq J\subseteq I_c}(-1)^{|J|-1}(zq;\,q)_{|J|-1}H_{c(J)}(zq^{|J|};\,q),
\quad
H_c(0,q)  = \dfrac{1}{(q)_\infty},\quad H_c(z,0)  = 1.
\label{eqn:Hrec}
\end{align}
\end{cor}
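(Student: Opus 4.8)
The plan is to deduce the corollary directly from the Corteel--Welsh theorem by exploiting the fact that $G_c$ and $H_c$ differ only by the $z$-independent factor $(q;q)_\infty$. Concretely, since $H_c(z,q) = G_c(z,q)/(q;q)_\infty$, equivalently $G_c(z,q) = (q;q)_\infty\, H_c(z,q)$, I would substitute this relation into the functional equation
\begin{align*}
G_c(z,q) = \sum_{\phi\subsetneq J\subseteq I_c}(-1)^{|J|-1}(zq;\,q)_{|J|-1}\,G_{c(J)}(zq^{|J|};\,q)
\end{align*}
asserted by the theorem.

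The key observation is that in each summand the base of the $q$-Pochhammer defining $G_{c(J)}$ is \emph{unchanged}: the substitution rescales only the first argument, $z\mapsto zq^{|J|}$, and not the base $q$. Hence $G_{c(J)}(zq^{|J|};\,q) = (q;q)_\infty\, H_{c(J)}(zq^{|J|};\,q)$, so the factor $(q;q)_\infty$ occurs identically on both sides of the identity. Dividing through by $(q;q)_\infty$ yields precisely the claimed recursion \eqref{eqn:Hrec} for $H_c$. Note that the coefficient $(zq;\,q)_{|J|-1}$ involves only the original variable $z$ and is therefore left untouched by this manipulation.

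For the initial conditions I would simply specialize. Setting $z=0$ gives $H_c(0,q) = G_c(0,q)/(q;q)_\infty = 1/(q)_\infty$, using $G_c(0,q)=1$; setting $q=0$ gives $H_c(z,0) = G_c(z,0)=1$, since $G_c(z,0)=1$ and $(q;q)_\infty$ has constant term $1$. Finally, uniqueness transfers for free: multiplication by the formal power series $(q;q)_\infty$, which is invertible since its constant term is $1$, is a bijection between the solution set of the $G$-system and that of the $H$-system. Thus the uniqueness of $G_c$ provided by the theorem immediately yields uniqueness of $H_c$.

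There is essentially no serious obstacle here; the corollary is a one-line consequence of the theorem once one records the single point requiring care, namely that the prefactor $(q;q)_\infty$ is insensitive to the substitution $z\mapsto zq^{|J|}$ because that substitution alters only the maximum-part variable and leaves the base $q$ fixed.
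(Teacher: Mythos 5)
Your proof is correct and takes the only natural route, which is the one the paper implicitly relies on (it states the corollary without proof as an immediate consequence of the Corteel--Welsh theorem): divide the $G$-recursion by the $z$-independent unit $(q;q)_\infty$, noting that the substitution $z\mapsto zq^{|J|}$ does not touch this prefactor, and transfer the initial conditions and uniqueness accordingly.
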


The aim of this paper is to conjecture and in some cases prove  that certain linear combinations of sums
in Andrews--Schilling--Warnaar form solve the system \eqref{eqn:Hrec} for compositions $c=(c_0,c_1,c_2)$ of length
$3$.

\section{\texorpdfstring{$\typeA_2$}{A2} identities in the Andrews--Schilling--Warnaar form}
\label{sec:conj}

In this section, our aim is to provide conjectures for $H_c(z,q)$ whenever $c=(c_0,c_1,c_2)$ is a length 3 
composition of $\ell\geq 2$. For a fixed $\ell$, we will present a specific way to arrange the corresponding
compositions (see for example \eqref{eqn:level16} for $\ell=16$). 
Based on this arrangement, we will provide conjectures for a subset of $H_c$ as certain linear combinations 
of sums in the Andrews--Schilling--Warnaar form, see Conjecture \ref{conj:main}. We will 
then prove that the remaining $H_c$ are completely deterined by the Corteel--Welsh recursions, see Theorem 
\ref{thm:remainingconj}.
Consider $k\geq 2$, let
\begin{align}
\rho &= [\rho_1,\rho_2,\dots,\rho_{k-1}]\in\ZZ^{k-1}_{\geq 0},\\
\sigma &= [\sigma_1,\sigma_2,\dots,\sigma_{k-1}]\in\ZZ^{k-1}_{\geq 0}\\
\end{align}
and define:
\begin{align}
S_{3k-1}(\rho\mid \sigma)
&= \sum_{r,s\in\ZZ^{k-1}_{\geq 0}}
\dfrac{z^{r_1}q^{\left(\sum_{1\leq i \leq k-1} r_i^2-r_is_i+s_i^2\right) + \rho\cdot r + \sigma\cdot s}}
{\prod_{1\leq i\leq k-2}(q)_{r_i-r_{i+1}}(q)_{s_i-s_{i+1}}}
\cdot\dfrac{q^{2r_{k-1}s_{k-1}}}{(q)_{r_{k-1}}(q)_{s_{k-1}}(q)_{r_{k-1}+s_{k-1}+1}}
\\
S_{3k+1}(\rho\mid \sigma)
&= \sum_{r,s\in\ZZ^{k-1}_{\geq 0}}
\dfrac{z^{r_1}q^{\left(\sum_{1\leq i \leq k-1} r_i^2-r_is_i+s_i^2\right)+\rho\cdot r + \sigma\cdot s}}
{\prod_{1\leq i\leq k-2}(q)_{r_i-r_{i+1}}(q)_{s_i-s_{i+1}}}
\cdot\dfrac{1}{(q)_{r_{k-1}}(q)_{s_{k-1}}(q)_{r_{k-1}+s_{k-1}+1}}
\\
S_{3k}(\rho\mid \sigma)
&= \sum_{r,s\in\ZZ^{k-1}_{\geq 0}}
\dfrac{z^{r_1}q^{\left(\sum_{1\leq i \leq k-1} r_i^2-r_is_i+s_i^2\right)+\rho\cdot r + \sigma\cdot s}}
{\prod_{1\leq i\leq k-2}(q)_{r_i-r_{i+1}}(q)_{s_i-s_{i+1}}}
\cdot\dfrac{1}{(q)_{r_{k-1}+s_{k-1}}(q)_{r_{k-1}+s_{k-1}+1}}\qbin{r_{k-1}+s_{k-1}}{r_{k-1}}_{q^3}
\end{align}
See \cite{AndSchWar} where Andrews, Schilling and Warnaar proved sum-product identities related to some of these sums.


Define the truncated analogues of the $\mathrm{e}$ vectors accordingly:
\begin{align}
\mathrm{e}_j &= [\underbrace{0,\cdots, 0}_{j},1,1,\cdots,1] \in \ZZ^{k-1},\notag\\
\mathrm{e}_{-1} &= [2,1,1,\dots,1] \in \ZZ^{k-1}.
\label{eqn:evect}
\end{align}
Also consider:
\begin{align}
\mathrm{\delta}_j &= [\underbrace{0,\dots,0}_{j-1},1,0,\dots,0]\in \ZZ^{k-1}.
\label{eqn:dvect}
\end{align}
Note that we have:
\begin{align}
S_m(\rho\mid \sigma)(zq^j,q)=S_m(\rho+j\delta_1\mid \sigma)(z,q).
\label{eqn:Sshift}
\end{align}
In Sections \ref{sec:m567} and \ref{sec:m8910}, we will work with $k=2$ and $k=3$, respectively.
In these cases, we will write various vectors (such as $\mathrm{e}_j$) explicitly. For example,
instead of $S_9(\mathrm{e_1}\mid\mathrm{e}_2)$ we will write $S_9(0,1\mid 0,0)$, etc.

Now fix $m\in 3k+\{-1,0,1\}$ and let $\ell=m-3$. Let $c=(c_0,c_1,c_2)$ be a composition of $\ell$. 

Using the cyclic symmetry of the generating functions for cylindric partitions, we will assume that
$c_0$ is the biggest part of $c$. Arrange all compositions $(c_0,c_1,c_2)$ of $\ell$
as follows.
Let $\lambda_0+\lambda_1+\lambda_2$ be the partition corresponding to $(c_0,c_1,c_2)$, i.e., 
$(\lambda_0,\lambda_1,\lambda_2)$ is just the composition $(c_0,c_1,c_2)$ sorted in descending order.
We put the composition $(c_0,c_1,c_2)$ in row $\lambda_1$ and column $\lambda_2$. 
We shall put a horizontal line after the row $k-1$. 
As an example, consider the case of $k=6, m=19, \ell=16$:
\begin{align}
{\arraycolsep=0.3cm
\def\arraystretch{1.2}
\begin{array}{llllll}
16,0,0 		\\
15,1,0/15,0,1 & 14,1,1    \\
14,2,0/14,0,2 & 13,2,1/13,1,2  & 12,2,2\\
13,3,0/13,0,3 & 12,3,1/12,1,3  & 11,3,2/11,2,3 & 10,3,3\\
12,4,0/12,0,4 & 11,4,1/11,1,4  & 10,4,2/10,2,4 & 9,4,3/9,3,4 & 8,4,4\\
11,5,0/11,0,5 & 10,5,1/10,1,5  & 9,5,2/9,2,5   & 8,5,3/8,3,5 & 7,5,4/7,4,5 & 6,5,5\\
\hline
10,6,0/10,0,6 & 9,6,1/9,1,6    & 8,6,2/8,2,6   & 7,6,3/7,3,6 & 6,6,4\\
9,7,0/9,0,7   & 8,7,1/8,1,7    & 7,7,2\\
8,8,0
\end{array}
}
\label{eqn:level16}
\end{align}

\begin{conj}\label{conj:main}
For compositions in rows $0,1,\dots,k-1$, i.e., if we have $c_1,c_2\leq k-1$, 
we conjecture that the identities are given by truncating the infinite level identities.
\begin{align}
H_{(c_0,c_1,c_2)}
&=
\begin{cases}
S_m(\mathrm{e}_{c_1}\mid \mathrm{e}_{c_2})
- qS_m(\mathrm{e}_{c_1-1}\mid \mathrm{e}_{c_2-1}) & c_1,c_2>0\\
S_m(\mathrm{e}_{c_1}\mid \mathrm{e}_0) & c_2=0\\
S_m(\mathrm{e}_0\mid \mathrm{e}_{c_2})
-q(1-z)S_m(\mathrm{e}_{-1}\mid \mathrm{e}_{c_2-1}) & c_1=0, c_2\neq 0.
\end{cases}
\label{eqn:aboveline}
\end{align}
These are our ``seed'' or ``above the line'' conjectures.
\end{conj}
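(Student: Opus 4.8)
The plan is to invoke the uniqueness half of the Corteel--Welsh theorem: for a fixed $\ell$, the functions $H_c$ are the \emph{unique} solutions of the system \eqref{eqn:Hrec}. So it suffices to show that the assignment given by the right-hand sides of \eqref{eqn:aboveline} (for $c$ above the line) together with the values propagated below the line by the recursions of Theorem \ref{thm:remainingconj} satisfies both initial conditions and every functional equation in \eqref{eqn:Hrec}. First I would dispose of the two initial conditions. The condition $H_c(z,0)=1$ is immediate: since $r_i^2-r_is_i+s_i^2=\tfrac12\big((r_i-s_i)^2+r_i^2+s_i^2\big)\geq 0$ with equality only at $r=s=0$, each $S_m(\cdot\mid\cdot)(z,0)=1$, while every correction term in \eqref{eqn:aboveline} carries an explicit factor $q$ and drops out.

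The condition $H_c(0,q)=1/(q)_\infty$ is exactly what Corollary \ref{cor:initcond} was built to supply. Setting $z=0$ forces $r_1=\dots=r_{k-1}=0$, and in all three families the last factor collapses to $1/\big((q)_{s_{k-1}}(q)_{s_{k-1}+1}\big)$, leaving an $(s_1,\dots,s_{k-1})$-sum that depends only on the second argument $\mathrm{e}_b$; via the standard passage between the $q$-binomial and difference forms, this sum is identified with $\tfrac{1}{1-q}X(k-1,b+1)$, where $X$ is the $n_0\to\infty$ limit of the left side of \eqref{eqn:afinAG}. The three cases of \eqref{eqn:aboveline} then read $\tfrac{1}{1-q}X(k-1,1)$ and $\tfrac{1}{1-q}\big(X(k-1,c_2+1)-qX(k-1,c_2)\big)$, and Corollary \ref{cor:initcond} evaluates each to $\tfrac{1}{1-q}\cdot\tfrac{1}{(q^2;q)_\infty}=\tfrac{1}{(q)_\infty}$, as required.

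The heart of the argument, and the main obstacle, is verifying the functional equations. Here the strategy is the one the paper executes for small moduli: assemble a toolbox of relations among the sums $S_m(\rho\mid\sigma)$ — the elementary shift \eqref{eqn:Sshift} together with the contiguous relations of Section \ref{sec:higher} that move a single entry of $\rho$ or $\sigma$ — and then exhibit each Corteel--Welsh equation as an explicit linear combination of these relations evaluated at the profiles $c(J)$. The below-the-line profiles entering a given equation are rewritten through Theorem \ref{thm:remainingconj}, so only the finitely many recursions not already consumed in defining those profiles remain to be checked; if they all reduce to known relations, uniqueness closes the proof.

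The hard part will be that this final reduction has so far been established only case by case (moduli $5,6,7,8,10$), where it is a finite check automated by the routine of Appendix \ref{app:verify}. I do not see a uniform mechanism producing, for every $k$ simultaneously, the combination of relations that realizes an arbitrary Corteel--Welsh equation: both the number of profiles $c(J)$ and the complexity of the needed relations grow with $k$, and brute force is already forbidding at moduli $9$ and $11$. The natural way to close the gap would be to organize the relations of Section \ref{sec:higher} into a single structural identity valid in all $k$ — in the spirit of Lemma \ref{lem:war_gen} at infinite level — which would simultaneously prove the infinite-level conjecture and, by truncation, upgrade Conjecture \ref{conj:main} from conjecture to theorem.
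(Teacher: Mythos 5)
The statement you were given is stated in the paper as a \emph{conjecture}, and the paper contains no general proof of it; there is therefore no complete argument to measure yours against. What you propose is, essentially, the paper's own program. Your handling of the initial conditions is exactly Theorem \ref{thm:initcond}: setting $z=0$ kills all the $r_i$, each sum collapses to $\tfrac{1}{1-q}X(k-1,b+1)$, and Corollary \ref{cor:initcond} yields $1/(q)_\infty$ in all three cases (your computation here is correct, and the condition $H_c(z,0)=1$ is indeed trivial enough that the paper does not comment on it). The reduction of the Corteel--Welsh functional equations to explicit linear combinations of the relations $R_1$--$R_4$ is precisely how the paper establishes the conjecture for moduli $5,6,7,8,10$, and the obstruction you flag --- the absence of a uniform mechanism producing the required combination for every $k$, with brute force already infeasible at moduli $9$ and $11$ --- is exactly the gap the paper leaves open: the closing conjecture of Section \ref{sec:higher} asserts, without proof, that these relations always suffice. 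So your proposal is a faithful reconstruction of the paper's partial results and of the missing step, but it does not (and, as you acknowledge, cannot as written) upgrade the statement from conjecture to theorem.
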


\begin{rem}
Using \cite[Prop.\ 7.3]{War}, we may put these seed conjectures into an alternate
form that is much closer to giving us manifestly non-negative sum-product identities.
\end{rem}

The following theorem is an immediate consequence of corollary \eqref{cor:initcond}.
\begin{thm}
\label{thm:initcond}
If we have $c_1,c_2\leq k-1$, then the conjectures above satisfy the 
required initial conditions of \eqref{eqn:Hrec}, i.e., $H_c(0,q)={(q)_\infty^{-1}}$.
\end{thm}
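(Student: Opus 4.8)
The plan is to reduce the claim $H_c(0,q)=(q)_\infty^{-1}$, for compositions with $c_1,c_2\leq k-1$, to the already-established Corollary \ref{cor:initcond}. The key observation is that setting $z=0$ in the sum-sides $S_m(\rho\mid\sigma)$ collapses the double sum drastically. Since every summand carries the factor $z^{r_1}$, putting $z=0$ kills all terms with $r_1\geq 1$; because the $r_i$ are weakly decreasing along the chain (forced by the factors $(q)_{r_i-r_{i+1}}^{-1}$ requiring $r_i\geq r_{i+1}$), $r_1=0$ forces $r_1=r_2=\cdots=r_{k-1}=0$. Thus at $z=0$ the entire $r$-sum trivializes, and each $S_m(\rho\mid\sigma)(0,q)$ becomes a single-variable sum over $s=[s_1,\dots,s_{k-1}]$.

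First I would carry out this collapse explicitly. With all $r_i=0$, the quadratic exponent $\sum(r_i^2-r_is_i+s_i^2)$ reduces to $\sum s_i^2$, the cross term $q^{2r_{k-1}s_{k-1}}$ (in the $S_{3k-1}$ case) becomes $1$, and the various $r$-dependent denominator factors simplify. The resulting $s$-sum should be recognizable as exactly the left-hand side of \eqref{eqn:afinAG} in the $n_0\to\infty$ limit, i.e.\ as $X(k,t)$ for an appropriate $t$ determined by $\sigma=\mathrm{e}_{c_2}$ (the shift vector $\mathrm{e}_{c_2}$ controls where the linear terms $n_t+n_{t+1}+\cdots$ begin, so $t$ is read off from $c_2$). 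I would check that $S_m(\mathrm{e}_{c_1}\mid\mathrm{e}_{c_2})(0,q)=X(k,c_2+1)$ and similarly identify the other pieces.

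Next I would substitute these identifications into the three cases of \eqref{eqn:aboveline}. In the generic case $c_1,c_2>0$, the conjecture reads $H_c=S_m(\mathrm{e}_{c_1}\mid\mathrm{e}_{c_2})-qS_m(\mathrm{e}_{c_1-1}\mid\mathrm{e}_{c_2-1})$; at $z=0$ this becomes $X(k,c_2+1)-q\cdot X(k,c_2)$, which Corollary \ref{cor:initcond} states equals $(q^2;q)_\infty^{-1}$. The boundary cases ($c_2=0$, and $c_1=0,c_2\neq0$) collapse similarly, with the $q(1-z)$ factor conveniently reducing to just $q$ at $z=0$; these should land on $X(k,1)$ or the corresponding combination, again giving $(q^2;q)_\infty^{-1}$ by the Corollary. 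The final bookkeeping step is to reconcile the normalization: the Corollary produces $(q^2;q)_\infty^{-1}$, whereas we want $(q)_\infty^{-1}$, so I expect an extra factor of $(1-q)^{-1}$ to enter, which is precisely accounted for by the $1/(1-q)$-type normalization present in the $m\equiv0$ sum-sides together with the outermost factors in the definitions of $S_m$.

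The main obstacle will be the careful matching of indices and normalizations across the three residue classes $m\in 3k+\{-1,0,1\}$: the three sum-sides $S_{3k-1}$, $S_{3k+1}$, $S_{3k}$ have slightly different tail denominators, so the $z=0$ collapse yields three superficially different single-variable sums, and I must verify each reduces to the \emph{same} quantity $X(k,t)$ governed by \eqref{eqn:afinAG}. Establishing that the tail factors $(q)_{r_{k-1}}(q)_{s_{k-1}}(q)_{r_{k-1}+s_{k-1}+1}$ and the $q^3$-binomial all degenerate correctly at $r=0$ to reproduce the single factor $(q^2;q)_{n_k}^{-1}$ appearing in \eqref{eqn:afinAG} is the delicate bookkeeping, but once the dictionary between $(c_1,c_2)$ and the parameter $t$ is pinned down, the result follows directly from Corollary \ref{cor:initcond}.
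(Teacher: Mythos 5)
Your proposal is correct and is exactly the argument the paper intends: the paper simply asserts the theorem is an immediate consequence of Corollary \ref{cor:initcond}, and your plan (set $z=0$, note $r_1=0$ forces all $r_i=0$, observe all three tail factors degenerate to $(q)_{s_{k-1}}(q)_{s_{k-1}+1}=(1-q)(q)_{s_{k-1}}(q^2;q)_{s_{k-1}}$, and recognize the collapsed $s$-sum as $\tfrac{1}{1-q}X(k-1,c_2+1)$) is precisely the omitted verification. The only nit is an off-by-one: the collapsed sum is $X(k-1,c_2+1)$, not $X(k,c_2+1)$, but you flagged the index bookkeeping as the step to pin down, and the Corollary applies in exactly the needed range.
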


We now show that the remaining identities where $\max(c_1,c_2)\geq k$ 
are determined by Corteel--Welsh recursions.
We will do this in two steps. First, we will determine $H_c$ for $c_0\geq c_1\geq c_2\geq 0$ 
(and $\max(c_1,c_2)\geq k$).
After this, we will analyze the case $c_0\geq c_2 \geq c_1\geq 0$ (and $\max(c_1,c_2)\geq k$).
To this end, we now look at this system of recursions closely.

Let us start with the first step.

We consider three cases of the Corteel--Welsh recursion \eqref{eqn:Hrec} that are relevant to us. 
First is when the composition $(c_0,c_1,c_2)$ is sufficiently far away from the edges, i.e.,
$c_0-2\geq c_1$,  $c_1-2\geq c_2>0$. 
Let $\phi\subsetneq J\subset\{0,1,2\}=I_c$. 
Now consider the diagram \eqref{eqn:Grec_part_in}.
All such diagrams indicate the locations of $c(J)$, with labels on the arrows being the set $J$.
Our plan is to use the recursion for composition enclosed in the dashed box to determine 
the $H$ corresponding to the composition enclosed in the solid box, assuming that 
$H$s corresponding to all other compositions in the diagram are known.
\begin{align}
\begin{matrix}
\begin{tikzpicture}
        \node[draw,dashed] (c) at (0,0) {$(c_0,c_1,c_2)$};
        \node[draw] (cj0) at (0,-2) {$(c_0-1,c_1+1,c_2)$};
        \node (cj1) at (5,2) {$(c_0,c_1-1,c_2+1)$};
        \node (cj2) at (-5,0) {$(c_0+1,c_1,c_2-1)$};
        \node (cj12) at (0,2)	{$(c_0+1,c_1-1,c_2)$};
        \node (cj01) at (5,0) {$(c_0-1,c_1,c_2+1)$};
        \node (cj02) at (-5,-2) {$(c_0,c_1+1,c_2-1)$};
        \draw[->] (c) -- (cj0) node[midway, right] {$\{0\}$};
        \draw[->] (c) -- (cj1) node[midway, left, yshift=2] {$\{1\}$};
        \draw[->] (c) -- (cj2) node[midway, below] {$\{2\}$};
        \draw[->] (c) -- (cj12) node[midway, right, yshift=6] {$\{1,2\}$};
        \draw[->] (c) -- (cj02) node[midway, right,yshift=-2.5] {$\{0,2\}$};
        \draw[->] (c) -- (cj01) node[midway, above] {$\{0,1\}$};
        \draw[->] (c) edge [out=150,in=120,loop] node[midway,above,left] {$\{0,1,2\}$} (c);
\end{tikzpicture}        
\end{matrix}
\label{eqn:Grec_part_in}
\end{align}
Because of our restriction $c_0-2\geq c_1$,  $c_1-2\geq c_2>0$, every composition 
in the diagram above is actually a partition.
Next, there are two edge cases.
If $c_0-2\geq c_1$, $c_1-2\geq c_2=0$, we have $I_c=\{0,1\}$ and then:
\begin{align}
\begin{matrix}
\begin{tikzpicture}
        \node[draw,dashed] (c) at (0,0) {$(c_0,c_1,0)$};
        \node[draw] (cj0) at (0,-2) {$(c_0-1,c_1+1,0)$};
        \node (cj1) at (5,2) {$(c_0,c_1-1,1)$};
        \node (cj01) at (5,0) {$(c_0-1,c_1,1)$};
        \draw[->] (c) -- (cj0) node[midway, right] {$\{0\}$};
        \draw[->] (c) -- (cj1) node[midway, left, yshift=2] {$\{1\}$};
        \draw[->] (c) -- (cj01) node[midway, above] {$\{0,1\}$};
\end{tikzpicture}        
\end{matrix}
\label{eqn:Grec_part_edge1}
\end{align}
Lastly, if $c_0-2\geq c_1$, but $c_1=c_2+1$ and $c_2>0$, we have $I_c=\{0,1,2\}$ and then:
\begin{align}
\begin{matrix}
\begin{tikzpicture}
        \node[draw,dashed] (c) at (0,0) {$(c_0,c_2+1,c_2)$};
        \node (cp) at (2.4,0) {$/(c_0,c_2,c_2+1)$};
        \node[draw] (cj0) at (0,-2) {$(c_0-1,c_2+2,c_2)$};
        \node (cj2) at (-5,0) {$(c_0+1,c_2+1,c_2-1)$};
        \node (cj12) at (0,2)	{$(c_0+1,c_2,c_2)$};
        \node (cj01) at (6,0) {$(c_0-1,c_2+1,c_2+1)$};
        \node (cj02) at (-5,-2) {$(c_0,c_2+2,c_2-1)$};
        \draw[->] (c) -- (cj0) node[midway, right] {$\{0\}$};
        \draw[->] (c) to[bend left] node[midway, above] {$\{1\}$} (cp);
        \draw[->] (c) -- (cj2) node[midway, below] {$\{2\}$};
        \draw[->] (c) -- (cj12) node[midway, right, yshift=6] {$\{1,2\}$};
        \draw[->] (c) -- (cj02) node[midway, right,yshift=-2.5] {$\{0,2\}$};
        \draw[->] (c) to[bend right] node[midway, below] {$\{0,1\}$} (cj01);
        \draw[->] (c) edge [out=150,in=120,loop] node[midway,above,left] {$\{0,1,2\}$} (c);
\end{tikzpicture}        
\end{matrix}
\label{eqn:Grec_part_edge2}
\end{align}

In each of the three cases, writing the recursion for $H_{(c_0,c_1,c_2)}$ we see that the term corresponding
to $(c_0-1,c_1+1,c_2)$ is simply
$H_{(c_0-1,c_1+1,c_2)}(zq;\,q)$. This means that if expressions for all other
terms in the diagram above are known, we may deduce the expression for $H_{(c_0-1,c_1+1,c_2)}$ by using
\eqref{eqn:Sshift} with $j=-1$.

This analysis proves that if $c=(c_0,c_1,c_2)$ is a composition in lines $k,k+1,\dots$
such that $c_0\geq c_1\geq c_2\geq 0$, then $H_c$ may be recursively deduced by
using the recursion for $H_{(c_0+1,c_1-1,c_2)}$ as we move left to right in each row and then traverse
the rows from top to bottom.

In our example of $k=6, m=19, \ell=16$ above, the sequence in which we deduce $H_c$ is as follows.
Here, the arrow head points to the $H_c$ being deduced, the tail is the composition for which 
recursion is being used and the number on the arrow denotes the progression in which the deduction is performed.
\begin{align*}
{\arraycolsep=0.35cm
\def\arraystretch{1.2}
\xymatrix{
11,5,0 \ar[d]_{1} & 10,5,1 \ar[d]_{2}  & 9,5,2 \ar[d]_{3}  & 8,5,3 \ar[d]_{4} & 7,5,4 \ar[d]_{5} & 6,5,5\\
10,6,0 \ar[d]_{6} & 9,6,1 \ar[d]_{7}   & 8,6,2\ar[d]_{8}  & 7,6,3  & 6,6,4\\
9,7,0 \ar[d]_{9}  & 8,7,1    & 7,7,2\\
{8,8,0}
}}
\end{align*}
The steps $1$, $6$, $9$ are handled by \eqref{eqn:Grec_part_edge1}, step $5$ by \eqref{eqn:Grec_part_edge2}
and rest of the steps are handled by \eqref{eqn:Grec_part_in}.

Next, we consider the case when $c=(c_0,c_1,c_2)$ with $c_0\geq c_2\geq c_1\geq 0$. Here, the analysis is similar, except that 
we move from right to left in each row $k,k+1,\dots$ and then traverse the rows top to bottom.
Remember that in this case, the composition $c$ is placed in row $c_2$ and column $c_1$.
As before, we need the following cases of the Corteel--Welsh recursions.

When the composition $(c_0,c_1,c_2)$ is sufficiently far away from the edges, i.e.,
$c_0-2\geq c_2$,  $c_2-2\geq c_1>0$. 
We have $I_c=\{0,1,2\}$, and then:
\begin{align}
\begin{matrix}
\begin{tikzpicture}
        \node[draw,dashed] (c) at (0,0) {$(c_0,c_1,c_2)$};
        \node (cj0) at (5,0) {$(c_0-1,c_1+1,c_2)$};
        \node[draw] (cj1) at (-5,-2) {$(c_0,c_1-1,c_2+1)$};
        \node (cj2) at (0,2) {$(c_0+1,c_1,c_2-1)$};
        \node (cj12) at (-5,0)	{$(c_0+1,c_1-1,c_2)$};
        \node (cj01) at (0,-2) {$(c_0-1,c_1,c_2+1)$};
        \node (cj02) at (5,2) {$(c_0,c_1+1,c_2-1)$};
        \draw[->] (c) -- (cj0) node[midway, above] {$\{0\}$};
        \draw[->] (c) -- (cj1) node[midway, left, yshift=2] {$\{1\}$};
        \draw[->] (c) -- (cj2) node[midway, right] {$\{2\}$};
        \draw[->] (c) -- (cj12) node[midway, above] {$\{1,2\}$};
        \draw[->] (c) -- (cj02) node[midway, right,yshift=-2.5] {$\{0,2\}$};
        \draw[->] (c) -- (cj01) node[midway, right] {$\{0,1\}$};
        \draw[->] (c) edge [out=150,in=120,loop] node[midway,above,left] {$\{0,1,2\}$} (c);
\end{tikzpicture}        
\end{matrix}
\label{eqn:Grec_comp_in}
\end{align}
We need one edge case of the composition $(c_0,c_0-3,c_0-2)$, with $c_0\geq 2$.
Here, we have:
\begin{align}
\begin{matrix}
\begin{tikzpicture}
        \node[draw,dashed] (c) at (0,0) {$(c_0,c_0-3,c_0-2)$};
        \node (cp) at (3,0) {$/(c_0,c_0-2,c_0-3)$};
        \node (cj0) at (7,0) {$(c_0-1,c_0-2,c_0-2)$};
        \node[draw] (cj1) at (-5,-2) {$(c_0,c_0-4,c_0-1)$};
        \node (cj2) at (0,2) {$(c_0+1,c_0-3,c_0-3)$};
        \node (cj12) at (-5,0)	{$(c_0+1,c_0-4,c_0-2)$};
        \node (cj01) at (0,-2) {$(c_0-1,c_0-3,c_0-1)$};
        \node (cj01p) at (3.6,-2) {$=(c_0-1,c_0-1,c_0-3)$};
        \draw[->] (c) to[bend right] node[midway, below] {$\{0\}$} (cj0);
        \draw[->] (c) -- (cj1) node[midway, left, yshift=2] {$\{1\}$};
        \draw[->] (c) -- (cj2) node[midway, right] {$\{2\}$};
        \draw[->] (c) -- (cj12) node[midway, above] {$\{1,2\}$};
        \draw[->] (c) to[bend left] node[midway, above] {$\{0,2\}$} (cp) ;
        \draw[->] (c) -- (cj01) node[midway, right] {$\{0,1\}$};
        \draw[->] (c) edge [out=150,in=120,loop] node[midway,above,left] {$\{0,1,2\}$} (c);
\end{tikzpicture}        
\end{matrix}
\label{eqn:Grec_comp_edge1}
\end{align}
In these two cases, writing the recursion for $H_{(c_0,c_1,c_2)}$ we see that the term corresponding
to $(c_0,c_1-1,c_2+1)$ is simply
$H_{(c_0,c_1-1,c_2+1)}(zq;\,q)$. This means that if expressions for all other
terms in the diagram above are known, we may deduce the expression for $H_{(c_0,c_1-1,c_2+1)}$ by using
\eqref{eqn:Sshift} with $j=-1$.
Now it is not hard to see that if $c=(c_0,c_1,c_2)$ is a composition in lines $k,k+1,\dots$
such that $c_0\geq c_2\geq c_1\geq 0$, then $H_c$ may be recursively deduced by
using the recursion for $H_{(c_0,c_1+1,c_2-1)}$ as we move from right to left in each row and then traverse
the rows from top to bottom.

Again, in our example of $k=6, m=19, \ell=16$:
\begin{align*}
{\arraycolsep=0.35cm
\def\arraystretch{1.2}
\xymatrix{
11,0,5  & 10,1,5 \ar[dl]_{4}  & 9,2,5 \ar[dl]_{3}  & 8,3,5 \ar[dl]_{2} & 7,4,5 \ar[dl]_{1} & 6,5,5\\
10,0,6  & 9,1,6 \ar[dl]_{6}   & 8,2,6\ar[dl]_{5}  & 7,3,6  & 6,6,4\\
9,0,7   & 8,1,7  & 7,7,2\\ 
8,8,0
}}
\end{align*}
Here, step 1 is performed using \eqref{eqn:Grec_comp_edge1} and rest of the steps by \eqref{eqn:Grec_comp_in}.
  
We have thus proved that our conjecture \ref{conj:main} is enough to determine $H_c$ for all compositions
$c$ of $\ell$:

\begin{thm}\label{thm:remainingconj}
For any fixed $\ell$, identities in Conjecture \ref{conj:main} are enough to determine 
expressions for $H_c$ for all length $3$ compositions $c$ of $\ell$.
\end{thm}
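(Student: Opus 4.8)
The plan is to regard this as a well-founded induction that carries the ``above the line'' formulas of Conjecture~\ref{conj:main} downward through the grid~\eqref{eqn:level16} using only the Corteel--Welsh system~\eqref{eqn:Hrec}. First I would use the cyclic symmetry~\eqref{eqn:Frotsym}, which descends to $H_c$ because the prefactor $(zq;q)_\infty/(q;q)_\infty$ is independent of $c$, to assume once and for all that $c_0=\max(c_0,c_1,c_2)$; this is precisely the normalization that positions each composition in the grid. Since the seeds occupy rows $0,\dots,k-1$ (the constraint $c_1,c_2\le k-1$), the whole content of the theorem is that every composition with $\max(c_1,c_2)\ge k$ can be reached. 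I would split these into the two triangular regions $c_0\ge c_1\ge c_2$ and $c_0\ge c_2\ge c_1$ and treat them in turn, mirroring the two ways the grid is populated.

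The engine of each step is a single observation about~\eqref{eqn:Hrec}: a singleton $J$ contributes the term $(-1)^{0}(zq;q)_0\,H_{c(J)}(zq,q)=H_{c(J)}(zq,q)$, with coefficient $1$ and only the shift $z\mapsto zq$. Concretely, in the first region the $J=\{0\}$ term of the recursion for $D=(c_0+1,c_1-1,c_2)$ is $H_{(c_0,c_1,c_2)}(zq,q)$, while in the second region the $J=\{1\}$ term of the recursion for $D=(c_0,c_1+1,c_2-1)$ is again $H_{(c_0,c_1,c_2)}(zq,q)$. In either case I would move the remaining $c(J)$-terms and the known $H_D$ across the equation to solve for $H_{(c_0,c_1,c_2)}(zq,q)$ as an explicit combination of quantities already in $S_m$-form; since~\eqref{eqn:Sshift} describes exactly how $z\mapsto zq^{j}$ acts on such sums, applying it with $j=-1$ undoes the shift and delivers $H_{(c_0,c_1,c_2)}(z,q)$ itself in the same form. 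The interior steps would use~\eqref{eqn:Grec_part_in} and~\eqref{eqn:Grec_comp_in}, and the boundary configurations ($c_2=0$, $c_1=c_2+1$, or $c=(c_0,c_0-3,c_0-2)$) would be handled by~\eqref{eqn:Grec_part_edge1}, \eqref{eqn:Grec_part_edge2}, and~\eqref{eqn:Grec_comp_edge1}.

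What remains---and where essentially all of the labor lies---is the combinatorial bookkeeping that makes this an honest induction: I must fix a traversal order and check that at each step every term of the relevant diagram other than the isolated target is already available. For the first region I would process each of rows $k,k+1,\dots$ from left to right and the rows from top to bottom; for the second, from right to left and top to bottom. One then verifies, against the (at most six) neighbours in~\eqref{eqn:Grec_part_in}/\eqref{eqn:Grec_comp_in} and the fewer neighbours of the edge diagrams, that each neighbour either sits in rows $0,\dots,k-1$ (a seed supplied by~\eqref{eqn:aboveline}) or has been deduced earlier in the chosen order---in particular that $H_D$ is known, which holds because $D$ lies one row above $(c_0,c_1,c_2)$. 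A recurring point to watch is that some neighbours $c(J)$ need not carry their largest part first and must be rotated back into the grid by~\eqref{eqn:Frotsym} before their value can be cited; this is exactly the identification $(c_0-1,c_0-3,c_0-1)=(c_0-1,c_0-1,c_0-3)$ flagged in~\eqref{eqn:Grec_comp_edge1}.

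The main obstacle I anticipate is precisely this verification of well-foundedness across all boundary configurations: one must confirm that the interior and edge diagrams~\eqref{eqn:Grec_part_in}--\eqref{eqn:Grec_comp_edge1} genuinely exhaust every position with $\max(c_1,c_2)\ge k$, and that no neighbour invoked as ``known'' in fact lies strictly later in the traversal than its target. Once this is settled---with the worked sequences for $k=6,\ \ell=16$ serving as the template---the two regions together account for every composition of $\ell$ with $c_0$ maximal, and cyclic symmetry~\eqref{eqn:Frotsym} returns all remaining compositions, so Conjecture~\ref{conj:main} determines $H_c$ for every length $3$ composition of $\ell$.
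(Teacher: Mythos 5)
Your proposal is correct and follows essentially the same route as the paper: normalize by cyclic symmetry so that $c_0$ is maximal, treat the two triangular regions $c_0\ge c_1\ge c_2$ and $c_0\ge c_2\ge c_1$ separately, isolate the singleton-$J$ term $H_{c(J)}(zq,q)$ in the Corteel--Welsh recursion for the composition one row above, and undo the shift via \eqref{eqn:Sshift} with $j=-1$, traversing rows top to bottom (left to right, respectively right to left). The bookkeeping you flag as the remaining labor is exactly what the paper's case analysis in \eqref{eqn:Grec_part_in}--\eqref{eqn:Grec_comp_edge1} carries out.
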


\section{Moduli \texorpdfstring{$3k$}{3k} with \texorpdfstring{$z=1$}{z=1}}
In this section, we explore the mod $m=3k$ identities with $z=1$ and give precise forumlas for the 
identities below the horizontal line, i.e., in the cases when $c_1\geq k$. 
We will in fact prove these formulas also, but conditional on the validity of the seed Conjecture \ref{conj:main}
at $z=1$.
Combined with the results already present in \cite{AndSchWar}, this will help us establish a few new identities
without resorting to the Corteel--Welsh recursions. 

At $k=2$, after setting $c_0$ to be the largest part,  there are no compositions with $c_1\geq 2$. We thus
fix $k\geq 3$. Let $(c_0,c_1,c_2)$ be a composition of $\ell=3k-3$.
Since we are setting $z=1$, due to the dihedral symmetry, we may and do assume that $(c_0,c_1,c_2)$ is
a partition, i.e., $c_0\geq c_1\geq c_2\geq 0$.

For convenience, we define:
\begin{align}
\pi_{(c_0,c_1,c_2)}=\dfrac{(q^{3k},q^{3k})^2_\infty}{(q)^3_\infty}\theta(q^{c_0+1}, q^{c_1+1},q^{c_2+1};\, q^{3k})
=\dfrac{(q^{3k},q^{3k})^2_\infty}{(q)^3_\infty}\theta(q^{c_1+1},q^{c_2+1},q^{c_1+c_2+2};\, q^{3k})
,
\label{eqn:mod3kprod}
\end{align}
which is nothing but the RHS of \eqref{eqn:Hcprod}.

We have the following very useful lemma \cite[p.\ 151]{GasRah-book} originally due to Weierstra\ss:
\begin{lem}   
\label{lem:weierstrass}
Let $a_1a_2\cdots a_n=b_1b_2\cdots b_n$ and suppose that $a_i\neq q^ta_j$ for all $i\neq j$ and $t\in\ZZ$. Then:
\begin{align}
\sum_{1\leq i\leq n}\dfrac{\prod_{1\leq j\leq n}\theta(a_ib_j^{-1};\,q)}{\prod_{1\leq j\leq n, j\neq i}\theta(a_ia_j^{-1};\,q)}=0.
\end{align}
\end{lem}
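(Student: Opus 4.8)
The plan is to prove this as an instance of a residue/partial-fractions argument for theta functions, which is the standard way these Weierstra\ss-type identities are established. The left-hand side is a sum of $n$ terms, each a ratio of products of theta functions; the goal is to show the total vanishes identically as a function of, say, the variable $a_n$ (with the constraint $a_1\cdots a_n = b_1\cdots b_n$ built in, so that moving $a_n$ forces a compensating change in the $b_j$, or equivalently one treats the $b_j$ as free and views the constraint as defining the identity on the appropriate domain). First I would recall the basic quasi-periodicity and zero structure of $\theta(x;q)=(x;q)_\infty(q/x;q)_\infty$: it has simple zeros exactly at $x\in q^{\ZZ}$, and satisfies the functional equation $\theta(qx;q)=-x^{-1}\theta(x;q)$, equivalently $\theta(x;q)=-x\,\theta(q/x;q)$ and $\theta(x^{-1};q)=-x^{-1}\theta(x;q)$. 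These are the only analytic facts needed.

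The key steps, in order, are as follows. First I would fix all variables except one — treat the expression as a function $f$ of a single multiplicative variable, the natural choice being to set $a_n$ free and absorb the constraint by writing $a_n = (b_1\cdots b_n)/(a_1\cdots a_{n-1})$, or cleaner: regard the whole sum as a function of $b_n$ and use the constraint to express one $a_i$ in terms of it. Second, I would show that each individual summand, as a function of this variable, has at worst simple poles, located where some $\theta(a_ia_j^{-1};q)$ in a denominator vanishes, i.e.\ at $a_i\in q^{\ZZ}a_j$. The hypothesis $a_i\neq q^t a_j$ for $i\neq j$ guarantees these are genuinely simple and that distinct summands do not share poles in an uncontrolled way. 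Third — the heart of the computation — I would check that the residues cancel in pairs: the residue of the $i$-th summand at the pole coming from the factor $\theta(a_ia_j^{-1};q)$ in its denominator exactly cancels the residue of the $j$-th summand at the same point. This is a direct evaluation using the numerator $\prod_k\theta(a_ib_k^{-1};q)$ and the functional equation to match the two residues up to sign. Fourth, having shown $f$ is holomorphic (no poles) and, via the quasi-periodicity of $\theta$, that $f$ is genuinely elliptic (invariant under $x\mapsto qx$) — the weights of numerator and denominator theta factors are arranged so the quasi-periodicity multipliers cancel, using precisely the balancing condition $a_1\cdots a_n=b_1\cdots b_n$ — I would conclude $f$ is a holomorphic elliptic function, hence constant in that variable. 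Fifth, I would pin down that constant to be $0$ by evaluating at a convenient specialization (for instance letting the free variable tend to a value where one numerator factor vanishes, forcing a term to drop and the rest to be seen to cancel, or a symmetric specialization $a_i=b_i$).

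The step I expect to be the main obstacle is the third one: the explicit residue cancellation, which requires carefully tracking signs and powers of $q$ produced by repeated application of $\theta(qx;q)=-x^{-1}\theta(x;q)$, and confirming that the contribution from the $i$-th term at its $(i,j)$-pole is the exact negative of the $j$-th term's contribution at the corresponding $(j,i)$-pole. The bookkeeping of which $\theta(a_ia_k^{-1};q)$ factors survive in each denominator after taking the residue, and showing the surviving products agree across the pair, is where the balancing hypothesis $a_1\cdots a_n=b_1\cdots b_n$ and the non-degeneracy hypothesis $a_i\neq q^ta_j$ both get used. Since this is a classical identity of Weierstra\ss\ with a known reference, I would most efficiently simply invoke \cite[p.\ 151]{GasRah-book} rather than reproduce the full residue computation; but the elliptic-function argument sketched above is the route I would take were a self-contained proof required.
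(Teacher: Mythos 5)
The paper does not actually prove this lemma: it is quoted verbatim from \cite[p.~151]{GasRah-book} as a classical identity of Weierstra{\ss}, so your closing remark --- that the efficient course is simply to invoke that reference --- is exactly what the paper does, and for the purposes of the paper nothing more is needed.

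Your self-contained sketch is the right family of argument, but step four contains a genuine error: the sum is \emph{not} an elliptic function of the free variable. Each summand has $n$ theta factors in its numerator and only $n-1$ in its denominator, so the quasi-periodicity multipliers cannot cancel to $1$. Concretely, if you let $a_1$ vary and preserve the balancing condition by the compensating shift $b_n\mapsto qb_n$, then using $\theta(qx;q)=-x^{-1}\theta(x;q)$ and $\theta(q^{-1}x;q)=-q^{-1}x\,\theta(x;q)$ one finds that \emph{every} summand (the $i=1$ term through its numerator, each $i\neq 1$ term through the single affected factors $\theta(a_ib_n^{-1};q)$ and $\theta(a_ia_1^{-1};q)$) acquires the same multiplier $a_1/b_n=b_1\cdots b_{n-1}/(a_2\cdots a_n)$, which is a constant $\lambda$ but not $1$ in general. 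So the conclusion ``holomorphic elliptic, hence constant'' is not available; one must instead argue that a holomorphic function on $\CC^*$ satisfying $f(qx)=\lambda f(x)$ with generic $\lambda\notin q^{\ZZ}$ vanishes identically (comparing Laurent coefficients), and then remove the genericity by continuity. That repair works, but it is not what you wrote, and the pairwise residue cancellation --- which you correctly identify as the main obstacle --- is left entirely undone.

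The cleaner classical proof, and the one effectively behind the cited reference, avoids both difficulties: apply the residue theorem to the single function
\begin{align*}
g(x)=\frac{\prod_{j=1}^{n}\theta(xb_j^{-1};\,q)}{\prod_{j=1}^{n}\theta(xa_j^{-1};\,q)},
\end{align*}
which has $n$ theta factors above and below and is therefore genuinely elliptic ($g(qx)=g(x)$) precisely because of the balancing condition $a_1\cdots a_n=b_1\cdots b_n$. Its poles in a period annulus are the simple poles at $x=a_i$ (simple by the hypothesis $a_i\neq q^ta_j$), and since $\theta(u;q)\sim(1-u)(q;q)_\infty^2$ as $u\to1$, the residue of $g(x)\,dx/x$ at $x=a_i$ equals $-(q;q)_\infty^{-2}$ times the $i$-th summand of the lemma. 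The vanishing of the sum of residues of an elliptic function then gives the identity at once, with no pairwise cancellation and no constant left to evaluate.
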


Now we have the following fundamental relation for mod $3k$ identities for $k\geq 3$.
\begin{thm}
\label{thm:weier_a2_rel}
Let $k\geq 3$ and let $c_0\geq c_1\geq c_2\geq 0$ such that $c_0+c_1+c_2=3k-3$ and $c_1\geq k$.
Then,
\begin{align}
\pi_{(c_0,c_1,c_2)}=\pi_{(2k-c_2-2,2k-c_1-2,2k-c_0-2)}-q^{c_2+1}\pi_{(2k+c_2,c_0-k,c_1-k)},
\label{eqn:mod3kqrel}
\end{align}
Consequently, using \eqref{eqn:Hcprod},
\begin{align}
H_{(c_0,c_1,c_2)}(1,q)=H_{(2k-c_2-2,2k-c_1-2,2k-c_0-2)}(1,q)-q^{c_2+1}H_{(2k+c_2,c_0-k,c_1-k)}(1,q).
\label{eqn:mod3kqrelH}
\end{align}
\end{thm}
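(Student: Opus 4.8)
The plan is to derive \eqref{eqn:mod3kqrel} as a single instance of the Weierstra\ss\ relation in Lemma \ref{lem:weierstrass}, applied with $n=3$ and with the base $q$ of that lemma replaced by $p=q^{3k}$. Since the prefactor $(q^{3k};q^{3k})_\infty^2/(q)_\infty^3$ is common to all three terms of \eqref{eqn:mod3kqrel}, it suffices to prove the corresponding three-term identity among the theta products. Abbreviating $\theta(x)=\theta(x;q^{3k})$, these products are $\Theta_1=\theta(q^{c_0+1})\theta(q^{c_1+1})\theta(q^{c_2+1})$, $\Theta_2=\theta(q^{2k-c_0-1})\theta(q^{2k-c_1-1})\theta(q^{2k-c_2-1})$ and $\Theta_3=\theta(q^{2k+c_2+1})\theta(q^{c_0-k+1})\theta(q^{c_1-k+1})$, and the goal is to show $\Theta_1=\Theta_2-q^{c_2+1}\Theta_3$. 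I would note first that in each triple the exponents sum to $3k$ (using $c_0+c_1+c_2=3k-3$), which is precisely what makes all three products amenable to a single Weierstra\ss\ relation.

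Next I would make the explicit choice $a_i=q^{(i-1)k}$ for $i=1,2,3$ and $b_1=q^{3k-c_0-1}$, $b_2=q^{3k-c_1-1}$, $b_3=q^{-c_2-1}$. The hypothesis $a_1a_2a_3=b_1b_2b_3$ of the lemma holds because both sides equal $q^{3k}$, again by $c_0+c_1+c_2=3k-3$; the genericity requirement $a_i\neq p^{t}a_j$ for all integers $t$ holds since the ratios $a_i/a_j\in\{q^{\pm k},q^{\pm 2k}\}$ are never powers of $p=q^{3k}$. With these values the numerator $\prod_{j}\theta(a_i/b_j)$ of the $i$-th term of Lemma \ref{lem:weierstrass} is a product of three theta functions whose arguments are $q^{(i-1)k-3k+c_0+1}$, $q^{(i-1)k-3k+c_1+1}$, $q^{(i-1)k+c_2+1}$.

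The heart of the computation is then to reduce each such numerator to one of $\Theta_1,\Theta_2,\Theta_3$ using the two elementary quasi-periodicity rules $\theta(px)=-x^{-1}\theta(x)$ and $\theta(p/x)=\theta(x)$. For $i=1$ this produces $q^{-3k-1-c_2}\Theta_1$; for $i=2$ it produces $q^{-k-1-c_2}\Theta_2$ (here the middle factor is clean, since $\theta(q^{k+c_2+1})=\theta(p/q^{2k-c_2-1})=\theta(q^{2k-c_2-1})$, while the other two factors supply the monomial); and for $i=3$, because $c_0\geq c_1\geq k$ forces $c_0-k+1,c_1-k+1\geq 1$ and $c_0+c_1+c_2=3k-3$ forces $c_2\leq k-3$, all three arguments already lie in the fundamental range and the numerator is exactly $\Theta_3$. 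In parallel, the denominators $\prod_{j\neq i}\theta(a_i/a_j)$ evaluate, via the same rules together with $\theta(q^{2k})=\theta(q^k)$, to $q^{-3k}\theta(q^k)^2$, $-q^{-k}\theta(q^k)^2$ and $\theta(q^k)^2$. Substituting into $\sum_i(\text{numerator})/(\text{denominator})=0$ and clearing the common factor $\theta(q^k)^{-2}$, the powers of $q$ collapse to give $q^{-c_2-1}\Theta_1-q^{-c_2-1}\Theta_2+\Theta_3=0$, which is exactly $\Theta_1=\Theta_2-q^{c_2+1}\Theta_3$. Multiplying back by the common prefactor yields \eqref{eqn:mod3kqrel}, and \eqref{eqn:mod3kqrelH} is then immediate from \eqref{eqn:Hcprod}.

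The main obstacle is the bookkeeping in the previous paragraph: one must track the signs and the powers of $q$ produced when the shifted arguments $a_i/b_j$ are folded back into canonical exponents, and confirm that the monomial prefactors coming from the three numerators and three denominators conspire to leave coefficient $1$ on $\Theta_1$ and relative coefficient $-q^{c_2+1}$ between $\Theta_2$ and $\Theta_3$. It is precisely here that the hypotheses $c_1\geq k$ and $c_0+c_1+c_2=3k-3$ are used essentially, both to guarantee the exponents land in the correct ranges and to make the stray $q$-powers cancel; a choice of the $b_j$ violating $a_1a_2a_3=b_1b_2b_3$ would leave spurious powers of $q$ and the lemma would not apply at all.
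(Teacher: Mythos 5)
Your proposal is correct and follows essentially the same route as the paper: both apply Lemma \ref{lem:weierstrass} with $n=3$ and base $q\mapsto q^{3k}$, verify the product condition via $c_0+c_1+c_2=3k-3$ and the genericity condition via $k\leq c_0,c_1\leq 2k-3$, $0\leq c_2\leq k-3$, and then fold the resulting theta quotients back using $\theta(x^{-1};p)=-x^{-1}\theta(x;p)$ and $\theta(p/x;p)=\theta(x;p)$. The only difference is the particular choice of the parameters $a_i,b_j$ (you take $a_i=q^{(i-1)k}$, which makes the three terms align with $\Theta_1,\Theta_2,\Theta_3$ somewhat more symmetrically, while the paper takes $a_1=1$, $a_2=q^{c_1-k+1}$, $a_3=q^{2k-c_0-1}$); this affects only the bookkeeping, and your exponent and sign computations check out.
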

\begin{proof}
In Lemma \ref{lem:weierstrass}, take $n=3$, $q\mapsto q^{3k}$ and:
\begin{align}
\begin{matrix}
a_1=1& a_2=q^{c_1-k+1} & a_3=q^{2k-c_0-1}\\
b_1=q^k& b_2=q^{c_1+1} & b_3=q^{-c_0-1}.
\end{matrix}
\end{align}
Clearly, we have $a_1a_2a_3=b_1b_2b_3$. 
We now argue that $a_i\neq q^{3t}a_j$ for all $i\neq j$ and $t\in\ZZ$.
We have $a_1/a_2=q^{-c_1+k-1}$, $a_1/a_3=q^{c_0+1-2k}$ and $a_2/a_3=q^{c_0+c_1+2-3k}$.
Clearly, we have the following  constraints due to our restrictions on $c_0,c_1,c_2$:
\begin{align*}
2k-3\geq c_0,c_1\geq k,\quad k-3\geq c_2\geq 0
\end{align*}
which imply that none of $a_1/a_2, a_1/a_3, a_2/a_3$ could equal $q^{3kt}$ for $t\in\ZZ$.
This means that the conditions for the application of Lemma \ref{lem:weierstrass} are met.
Applying this lemma, we see:
\begin{align*}
\dfrac{\theta(q^{-k},q^{-c_1-1},q^{c_0+1};\,q^{3k})}{\theta(q^{k-c_1-1},q^{c_0+1-2k};\,q^{3k})}
+\dfrac{\theta(q^{c_1+1-2k},q^{-k},q^{c_0+c_1-k+2};\,q^{3k})}{\theta(q^{c_1-k+1},q^{c_0+c_1-3k+2};\,q^{3k})}
+\dfrac{\theta(q^{k-c_0-1},q^{2k-c_0-c_1-2},q^{2k};\,q^{3k})}{\theta(q^{2k-c_0-1},q^{3k-c_0-c_1-2};\,q^{3k})}=0.
\end{align*}
We now use the fact that $\theta(x^{-1};\,q)=-x^{-1}\theta(x;\,q)$ to modify this to:
\begin{align*}
q^{-c_0-1}&\dfrac{\theta(q^{k},q^{c_1+1},q^{c_0+1};\,q^{3k})}
{\theta(q^{-k+c_1+1},q^{-c_0-1+2k};\,q^{3k})}
-q^{-c_0-1}\dfrac{\theta(q^{-c_1-1+2k},q^{k},q^{c_0+c_1-k+2};\,q^{3k})}
{\theta(q^{c_1-k+1},q^{-c_0-c_1+3k-2};\,q^{3k})}\\
&+q^{3k-3-2c_0-c_1}\dfrac{\theta(q^{-k+c_0+1},q^{-2k+c_0+c_1+2},q^{2k};\,q^{3k})}
{\theta(q^{2k-c_0-1},q^{3k-c_0-c_1-2};\,q^{3k})}=0.
\end{align*}
Now,
observe that each of the terms has a factor $\theta(q^k;\,q^{3k})=\theta(q^{2k};\,q^{3k})$. Cancel this factor, and then 
multiply through by $q^{c_0+1}\theta(q^{c_1+1-k},q^{2k-c_0-1},q^{3k-c_0-c_1-2};\,q^{3k})$ to get:
\begin{align*}
\theta(q^{c_1+1},&q^{c_0+1},q^{3k-c_0-c_1-2};\,q^{3k})
-{\theta(q^{-c_1-1+2k},q^{c_0+c_1-k+2},q^{2k-c_0-1};\,q^{3k})}\\
&+q^{3k-2-c_0-c_1}
{\theta(q^{-k+c_0+1},q^{-2k+c_0+c_1+2},q^{c_1+1-k};\,q^{3k})}
=0.
\end{align*}
Finally, recognizing that $c_0+c_1=3k-3-c_2$ and then multiplying through
by $\dfrac{(q^{3k};\,q^{3k})^2_\infty}{(q)^3_\infty}$, we obtain the required result.
\end{proof}

The term on the LHS of \eqref{eqn:mod3kqrelH} lies below the line.
Crucially, all the terms on the RHS of \eqref{eqn:mod3kqrelH}
involve compositions $(a,b,c)$ with $a\geq b\geq c\geq 0$ and $b,c\leq k-1$,
in particular, they lie above the line. 
These terms are therefore encompassed by Conjecture \ref{conj:main}.
Substituting the corresponding expressions from Conjecture \ref{conj:main}, we obtain the following
theorem.
\begin{thm}
\label{thm:mod3krel}
Let $k\geq 3$. 
Let $c=(c_0,c_1,c_2)$ be such that $c_0\geq c_1\geq c_2\geq 0$, $c_0+c_1+c_2=3k-3$ and $c_1\geq k$.
If formulas in Conjecture \ref{conj:main} are true for $z=1$, then we must have:
\begin{align}
H_c(1,q)=
X(1,q)-q^{c_2+1}Y(1,q),
\end{align}
where:
\begin{align}
X(z,q)=
\begin{cases}
S_{3m}(\mathrm{e}_{2k-c_1-2}\mid\mathrm{e}_{2k-c_0-2})-qS_{3m}(\mathrm{e}_{2k-c_1-3}\mid\mathrm{e}_{2k-c_0-3}),
& c_0<2k-2\\
S_{3m}(\mathrm{e}_{2k-c_1-2}\mid\mathrm{e}_0),
& c_0=2k-2
\end{cases}
\end{align}
and 
\begin{align}
Y(z,q)=
\begin{cases}
S_{3m}(\mathrm{e}_{c_0-k}\mid\mathrm{e}_{c_1-k})-qS_{3m}(\mathrm{e}_{c_0-k-1}\mid\mathrm{e}_{c_1-k-1}), & c_1>k\\
S_{3m}(\mathrm{e}_{c_0-k}\mid\mathrm{e}_0) & c_1=k.
\end{cases}
\end{align}
\end{thm}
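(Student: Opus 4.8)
The plan is to read the result off directly from Theorem~\ref{thm:weier_a2_rel} by substituting the seed formulas of Conjecture~\ref{conj:main}. Relation~\eqref{eqn:mod3kqrelH} already rewrites $H_{(c_0,c_1,c_2)}(1,q)$ as
\[
H_{(2k-c_2-2,\,2k-c_1-2,\,2k-c_0-2)}(1,q)\;-\;q^{c_2+1}\,H_{(2k+c_2,\,c_0-k,\,c_1-k)}(1,q),
\]
so the entire task reduces to showing that each of the two compositions on the right is a partition lying \emph{above the line} (both of its smaller entries being $\leq k-1$), whence Conjecture~\ref{conj:main} at $z=1$ supplies explicit Andrews--Schilling--Warnaar expressions for them; these are exactly $X(1,q)$ and $Y(1,q)$. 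There is no analytic content beyond Theorem~\ref{thm:weier_a2_rel} itself, and the only genuine work is the inequality bookkeeping that selects the correct branch of the seed conjecture for each term.

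First I would record the elementary consequences of the hypotheses $c_0\geq c_1\geq c_2\geq 0$, $c_0+c_1+c_2=3k-3$, and $c_1\geq k$: from $c_0\geq c_1\geq k$ one gets $c_0+c_1\geq 2k$, hence $c_2\leq k-3$ and $c_0=3k-3-c_1-c_2\leq 2k-3$. For the $X$-term, set $(C_0,C_1,C_2)=(2k-c_2-2,\,2k-c_1-2,\,2k-c_0-2)$; these bounds give $C_0\in[k+1,2k-2]$ and $1\leq C_2\leq C_1\leq k-2$, so $(C_0,C_1,C_2)$ is sorted, sits above the line, and has both smaller entries strictly positive. Only the first case of Conjecture~\ref{conj:main} applies, producing the stated $X(1,q)$; the branch $c_0=2k-2$ (i.e.\ $C_2=0$) is recorded for completeness but is excluded here since $c_0\leq 2k-3$.

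Then I would treat the $Y$-term identically. For $(2k+c_2,\,c_0-k,\,c_1-k)$ the same bounds give $2k+c_2\geq 2k$ while $0\leq c_1-k\leq c_0-k\leq k-3$, so this is again a sorted composition above the line, and whether its last entry $c_1-k$ is positive or zero is precisely the dichotomy $c_1>k$ versus $c_1=k$. Applying the first case of Conjecture~\ref{conj:main} when $c_1>k$ and the $c_2=0$ branch when $c_1=k$ reproduces the stated $Y(1,q)$. Since the third case of Conjecture~\ref{conj:main} (the branch carrying the factor $q(1-z)$) never occurs for either term, setting $z=1$ loses nothing, and the theorem follows. The step I expect to require the most care is the verification that each target composition is already in descending order with both smaller parts bounded by $k-1$, since it is this check---rather than any identity manipulation---that justifies invoking the seed conjecture in the appropriate form.
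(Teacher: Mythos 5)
Your proposal is correct and follows exactly the paper's route: Theorem~\ref{thm:weier_a2_rel} (i.e.\ \eqref{eqn:mod3kqrelH}) reduces everything to checking that the two compositions $(2k-c_2-2,2k-c_1-2,2k-c_0-2)$ and $(2k+c_2,c_0-k,c_1-k)$ are sorted with both smaller parts at most $k-1$, after which one substitutes the seed formulas of Conjecture~\ref{conj:main}; the inequality bookkeeping you carry out (including the observation that $c_0\leq 2k-3$ makes the $c_0=2k-2$ branch vacuous and that the $q(1-z)$ branch never arises) is exactly what the paper's one-paragraph argument implicitly relies on.
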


Now, we observe that \cite[Thm.\ 5.4]{AndSchWar} actually proves Conjecture \ref{conj:main} with $z=1$
for the case of compositions $(3k-3-2j,j,j)$ with $0\leq j\leq k-1$. 
In our arrangement, these entries lie to the far right of rows $0,1,2,\cdots,k-1$ above the horizontal line.
We may therefore deduce new sum-product identities whenever both terms on the 
RHS of \eqref{eqn:mod3kqrelH} involve such compositions. 
This happens when $(c_0,c_1,c_2)$ in \eqref{eqn:mod3kqrelH}
is taken to be $(i,i,3k-3-2i)$ with $i\geq k$ and $3k-3-2i\geq 0$.
This is captured in the following theorem.
\begin{thm}
\label{thm:mod3k}
Let $k\geq 3$, and let $i\geq k$ such that $3k-3-2i\geq 0$.
We then have:
\begin{align}
H_{(i,i,3k-3-2i)}(1,q)
= 
X(1,q)-q^{3k-2-2i}Y(1,q),
\end{align}
where:
\begin{align}
X(z,q)=
\begin{cases}
S_{3m}(\mathrm{e}_{2k-i-2}\mid\mathrm{e}_{2k-i-2})-qS_{3m}(\mathrm{e}_{2k-i-3}\mid\mathrm{e}_{2k-i-3}),
& i<2k-2\\
S_{3m}(\mathrm{e}_{2k-i-2}\mid\mathrm{e}_0),
& i=2k-2
\end{cases}
\end{align}
and 
\begin{align}
Y(z,q)=
\begin{cases}
S_{3m}(\mathrm{e}_{i-k}\mid\mathrm{e}_{i-k})-qS_{3m}(\mathrm{e}_{i-k-1}\mid\mathrm{e}_{i-k-1}), & i>k\\
S_{3m}(\mathrm{e}_{i-k}\mid\mathrm{e}_0) & i=k.
\end{cases}
\end{align}
\end{thm}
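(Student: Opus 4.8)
The plan is to read off Theorem \ref{thm:mod3k} as the \emph{unconditional} instance of Theorem \ref{thm:mod3krel} obtained by specializing the Weierstra\ss{} relation \eqref{eqn:mod3kqrelH} to the single symmetric composition $c=(i,i,3k-3-2i)$. The essential point, which removes the conditional hypothesis present in Theorem \ref{thm:mod3krel}, is that for \emph{this particular} $c$ both compositions appearing on the right-hand side of \eqref{eqn:mod3kqrelH} have the special ``diagonal'' shape $(3k-3-2j,j,j)$, for which Conjecture \ref{conj:main} at $z=1$ is not a conjecture but the theorem \cite[Thm.\ 5.4]{AndSchWar}.

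First I would check admissibility. Under the hypotheses $i\geq k$ and $3k-3-2i\geq 0$ one has $k\leq i\leq (3k-3)/2$, so $i\geq i\geq 3k-3-2i\geq 0$ and $c_0+c_1+c_2=3k-3$; thus $c=(i,i,3k-3-2i)$ is a partition with $c_1=i\geq k$, exactly meeting the hypotheses of Theorem \ref{thm:weier_a2_rel}. Applying \eqref{eqn:mod3kqrelH} with $c_0=c_1=i$, $c_2=3k-3-2i$ (so $c_2+1=3k-2-2i$), I would compute the two right-hand compositions:
\begin{align*}
(2k-c_2-2,\,2k-c_1-2,\,2k-c_0-2)&=(2i-k+1,\,2k-i-2,\,2k-i-2),\\
(2k+c_2,\,c_0-k,\,c_1-k)&=(5k-3-2i,\,i-k,\,i-k).
\end{align*}
A one-line verification identifies these as $(3k-3-2j_1,j_1,j_1)$ with $j_1=2k-i-2$ and $(3k-3-2j_2,j_2,j_2)$ with $j_2=i-k$.

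The step demanding care is the index bookkeeping that places both terms inside the Andrews--Schilling--Warnaar diagonal family, i.e.\ confirming $j_1,j_2\in\{0,1,\dots,k-1\}$. From $k\leq i\leq(3k-3)/2$ I would derive $1\leq (k-1)/2\leq j_1=2k-i-2\leq k-2$ and $0\leq j_2=i-k\leq (k-3)/2\leq k-1$. Note in passing that $j_1\geq 1$ always, so the branch $i=2k-2$ of $X$ is in fact vacuous under these constraints and the first branch of $X$ always applies, whereas $j_2=0$ occurs precisely when $i=k$, matching the second branch of $Y$.

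Finally I would invoke \cite[Thm.\ 5.4]{AndSchWar}, which establishes Conjecture \ref{conj:main} at $z=1$ for every diagonal composition $(3k-3-2j,j,j)$ with $0\leq j\leq k-1$. This discharges the conditional hypothesis of Theorem \ref{thm:mod3krel} for exactly the two terms at hand: substituting the (now proved) Conjecture \ref{conj:main} expressions for $H_{(3k-3-2j_1,j_1,j_1)}(1,q)$ and $H_{(3k-3-2j_2,j_2,j_2)}(1,q)$ into \eqref{eqn:mod3kqrelH} produces precisely $X(1,q)$ and $Y(1,q)$ as written in the statement, with the prefactor $q^{3k-2-2i}=q^{c_2+1}$, giving the result unconditionally. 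I expect no analytic obstacle: the genuine content lives in Theorem \ref{thm:weier_a2_rel} (the theta-function Weierstra\ss{} relation) and in \cite{AndSchWar}; the only thing to get right is the arithmetic of the two index shifts $j_1=2k-i-2$ and $j_2=i-k$ and the matching of the $i=k$ boundary to the correct branch of $Y$.
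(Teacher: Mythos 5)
Your proposal is correct and follows essentially the same route as the paper: the paper proves Theorem \ref{thm:mod3k} in the paragraph preceding its statement by specializing \eqref{eqn:mod3kqrelH} to $c=(i,i,3k-3-2i)$ and observing that both right-hand compositions are of the diagonal form $(3k-3-2j,j,j)$ with $0\leq j\leq k-1$, for which Conjecture \ref{conj:main} at $z=1$ is already established by \cite[Thm.\ 5.4]{AndSchWar}. Your index bookkeeping ($j_1=2k-i-2$, $j_2=i-k$, both landing in $\{0,\dots,k-1\}$, with $i=k$ matching the second branch of $Y$) is exactly the verification the paper leaves implicit, and your observation that the $i=2k-2$ branch of $X$ is vacuous under the stated constraints is accurate.
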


As an example, consider the mod $9$, i.e., $k=3$ identities. 
Here, there is exactly one value of $i$, namely $i=3$ that can be used in Theorem \ref{thm:mod3k}.
Thus, corresponding to the composition $(3,3,0)$ we have proved:
\begin{align}
H_{(3,3,0)}&=\dfrac{(q^9;q^9)^2_\infty\theta(q,q^4,q^4;\,q^9)}{(q)^3_\infty}
=\dfrac{1}{(q)_\infty}\dfrac{1}{\theta(q,q^2,q^2,q^3,q^3;q^9)_\infty}
\notag\\
&=S_{9}(\mathrm{e_1}\mid \mathrm{e}_1)(1,q)-qS_9(\mathrm{e_0}\mid \mathrm{e}_0)(1,q)-qS_{9}(\mathrm{e}_0\mid\mathrm{e}_0)\notag\\
&=\sum_{r_1,r_2,s_1,s_2\geq 0}
\dfrac{q^{r_1^2-r_1s_1+s_1^2+r_2^2-r_2s_2+s_2^2+r_2+s_2}(1-2q^{1+r_1+s_1})}
{(q)_{r_1-r_2}(q)_{s_1-s_2}(q)_{r_2+s_2}(q)_{r_2+s_2+1}}
\qbin{r_2+s_2}{r_2}_{q^3}.
\label{eqn:mod9missing}
\end{align}

Experimentally, we have the following conjectural, two-variable, sum-side version of Theorem \ref{thm:weier_a2_rel}. 
\begin{conj}
We have:
\begin{align}
H_{(k,k,k-3)}(z,q)=H_{(k+1,k-2,k-2)}(z,q)-zq^{k-2}H_{(3k-3,0,0)}(z,q).
\end{align}
\end{conj}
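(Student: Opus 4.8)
The first thing I would record is that at $z=1$ this is precisely Theorem \ref{thm:weier_a2_rel}: substituting $(c_0,c_1,c_2)=(k,k,k-3)$ into \eqref{eqn:mod3kqrelH} gives $H_{(k+1,k-2,k-2)}(1,q)-q^{k-2}H_{(3k-3,0,0)}(1,q)$, matching the conjecture at $z=1$. Thus the statement is a two-variable lift of an identity we have already proved via Weierstra\ss. The obstruction is that the proof of Theorem \ref{thm:weier_a2_rel} is purely a theta-function (product-side) argument with no analogue for $z\neq 1$, where $H_c(z,q)$ is only a generating function. A useful reformulation: since $H_c=\frac{(zq)_\infty}{(q)_\infty}F_c$, the common factor cancels and the conjecture is equivalent to the clean relation among cylindric generating functions
\begin{align}
F_{(k,k,k-3)}(z,q)=F_{(k+1,k-2,k-2)}(z,q)-zq^{k-2}F_{(3k-3,0,0)}(z,q),
\end{align}
which in particular is unconditional (it does not presuppose Conjecture \ref{conj:main}). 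As immediate consistency checks, both sides agree at $z=0$ (all $H_c(0,q)=(q)_\infty^{-1}$, and the $zq^{k-2}$ term drops) and at $q=0$ (all $H_c(z,0)=1$).

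My main plan would mirror the method used for moduli $5,6,7,8,10$. The profiles $(k+1,k-2,k-2)$ and $(3k-3,0,0)$ sit above the horizontal line, so Conjecture \ref{conj:main} supplies explicit Andrews--Schilling--Warnaar expressions for $H_{(k+1,k-2,k-2)}$ and $H_{(3k-3,0,0)}$ in terms of the sums $S_{3k}(\cdot\mid\cdot)(z,q)$, while $(k,k,k-3)$ lies below the line and is pinned down by the Corteel--Welsh recursion via the procedure of Theorem \ref{thm:remainingconj} (using the shift \eqref{eqn:Sshift}). Carrying out that determination turns the conjecture into a single identity among explicitly written $S_{3k}$ sums. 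I would then try to exhibit this identity as an explicit linear combination of the $(z,q)$-relations satisfied by the ASW sums developed in Section \ref{sec:higher}, exactly as in the verified cases, so that it could be checked mechanically by the program of Appendix \ref{app:verify}. Setting $z=1$ must collapse the resulting combination to \eqref{eqn:mod3kqrelH}, which is a valuable guide for guessing the correct coefficients.

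An appealing unconditional alternative is to prove the $F_c$ relation directly. Because the $F_c$ provably satisfy the Corteel--Welsh system \eqref{eqn:Hrec}, one could seek to derive the three-term relation from a finite combination of Corteel--Welsh recursions for neighbouring profiles, or read it off combinatorially: rewritten as $F_{(k+1,k-2,k-2)}=F_{(k,k,k-3)}+zq^{k-2}F_{(3k-3,0,0)}$, it asks for a partition of the cylindric partitions of profile $(k+1,k-2,k-2)$ into one class equinumerous with profile $(k,k,k-3)$ (same $\max$ and weight) and a complementary class in bijection with profile $(3k-3,0,0)$ via a map increasing $\max$ by $1$ and weight by $k-2$; the factor $zq^{k-2}$ names exactly this local move.

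The hard part in either route is the single, unexplained power of $z$: it is precisely this $z$ (rather than the bare $q^{k-2}$ of the $z=1$ identity) that the Weierstra\ss\ argument cannot see, and pinning it down is the whole content of the generalization. Concretely, in the ASW route the obstacle is establishing, for general $k$, the family of two-variable relations among the $S_{3k}$ sums and verifying that the intended linear combination closes --- the same computational heaviness the authors flag for moduli $9$ and $11$; in the combinatorial route the obstacle is constructing the signed/local bijection uniformly in $k$.
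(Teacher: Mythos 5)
The statement you were asked to prove is, in the paper, explicitly a \emph{conjecture}: the authors introduce it with ``Experimentally, we have the following conjectural, two-variable, sum-side version of Theorem \ref{thm:weier_a2_rel}'' and close with ``It will be interesting to find a combinatorial proof of this.'' There is therefore no proof in the paper to compare yours against, and your proposal does not supply one either --- it is an (accurate) diagnosis plus a research plan. Your preliminary observations are all correct and match what the paper itself records: substituting $(c_0,c_1,c_2)=(k,k,k-3)$ into \eqref{eqn:mod3kqrelH} does give exactly the $z=1$ specialization, so the $z=1$ case is the (unconditionally proved, theta-function) Theorem \ref{thm:weier_a2_rel}; the reduction to a relation among the $F_c$ via cancelling $(zq)_\infty/(q)_\infty$ is right; the placement of the three profiles relative to the horizontal line is right; and the $z=0$, $q=0$ checks are fine. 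Your two suggested routes are also the natural ones --- the second (a weight- and max-tracking bijection/involution on cylindric partitions, with $zq^{k-2}$ recording a local move) is precisely the combinatorial proof the authors say they would like to see.

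The genuine gap is that neither route is carried out, and the hard step is exactly where you locate it: the Weierstra\ss{} identity lives entirely on the product side and cannot see the variable $z$, so nothing in the paper's machinery produces the single factor of $z$ in $zq^{k-2}H_{(3k-3,0,0)}(z,q)$. One caution about your first route: expressing $H_{(k,k,k-3)}(z,q)$ through Theorem \ref{thm:remainingconj} and then matching against the ASW expressions for the two above-the-line profiles would make the resulting $S_{3k}$-identity \emph{conditional} on Conjecture \ref{conj:main} (or would require first proving the relevant instances of it), whereas the conjecture as stated about the $H_c$ is unconditional --- so that route proves something slightly weaker unless the seed identities for modulus $3k$ are themselves established, which for general $k$ they are not. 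As it stands, your submission should be read as a correct framing of an open problem rather than a proof.
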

It will be interesting to find a combinatorial proof of this.

\section{Moduli \texorpdfstring{$5$}{5}, \texorpdfstring{$6$}{6}, \texorpdfstring{$7$}{7}}
\label{sec:m567}
These moduli correspond to $k=2$. 
We state and prove all required identities for these three moduli, with the case of mod $6$ being new.

\subsection{Mod \texorpdfstring{$5$}{5}}
The arrangement is as follows.
\begin{align}
\begin{matrix}
2,0,0 \\
1,1,0\\ \hline
\end{matrix}
\end{align}

The identities are as follows. 
\begin{align}
H_{(2,0,0)} &= S_5(1\mid 1), \\
H_{(1,1,0)} &= S_5(0\mid 1). 
\end{align}

We now show that these give unique solutions to \eqref{eqn:Hrec} with $r=2$ and $\ell=2$.
The recursion for $H_{(2,0,0)}$ is satisfied immediately,
and proving the recursion for $H_{(1,1,0)}$ amounts to the following:
\begin{thm} We have
\begin{align}
zqS_5(2\mid 1) - S_5(0\mid 1)+S_5(1\mid 1)=0.	
\end{align}
\end{thm}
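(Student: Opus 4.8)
The plan is to unwind the definitions, reduce the three-term relation to a single vanishing sum, and then annihilate that sum by a self-similar descent. First I would specialize to $k=2$, where only one pair of indices $r=r_1,s=s_1$ survives and the quadratic form collapses via $r^2-rs+s^2+2rs=r^2+rs+s^2$, so that
\[
S_5(a\mid b)=\sum_{r,s\geq 0}\frac{z^{r}q^{r^2+rs+s^2+ar+bs}}{(q)_r(q)_s(q)_{r+s+1}}.
\]
Next I would simplify each term of $zqS_5(2\mid1)-S_5(0\mid1)+S_5(1\mid1)$. In $zqS_5(2\mid1)$ I would shift $r\mapsto r-1$; a short exponent computation shows the linear parts cancel and leaves $P:=\sum_{r\geq1,s\geq0}z^rq^{r^2+rs+s^2}/\bigl((q)_{r-1}(q)_s(q)_{r+s}\bigr)$. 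In $S_5(1\mid1)-S_5(0\mid1)$ the two summands differ only by a factor $q^{r}$, so the difference equals $\sum z^r q^{r^2+rs+s^2+s}(q^r-1)/\bigl((q)_r(q)_s(q)_{r+s+1}\bigr)$; using $(1-q^r)/(q)_r=1/(q)_{r-1}$, with the $r=0$ term vanishing, this collapses to $-Q:=-\sum_{r\geq1,s\geq0}z^rq^{r^2+rs+s^2+s}/\bigl((q)_{r-1}(q)_s(q)_{r+s+1}\bigr)$. The claim thus reduces to $P=Q$.

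Then I would merge $P-Q$ into one sum using $1/(q)_{r+s}=(1-q^{r+s+1})/(q)_{r+s+1}$, obtaining
\[
E_0:=\sum_{r\geq1,\,s\geq0}\frac{z^{r}q^{r^2+rs+s^2}\bigl(1-q^{s}-q^{r+s+1}\bigr)}{(q)_{r-1}(q)_s(q)_{r+s+1}}=P-Q,
\]
and I would embed $E_0$ in the family $E_n$ obtained by inserting a factor $q^{\,n(r+s+1)}$ in the numerator, replacing $q^{r+s+1}$ by $q^{r+s+n+1}$ there, and replacing the trailing denominator factor by $(q)_{r+s+n+1}$. The engine of the proof is the descent relation $E_n=-q^{\,n}E_{n+1}$. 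To establish it I would split $E_n$ into its $1$-, $q^s$-, and $q^{r+s+n+1}$-pieces $U_n,V_n,W_n$; apply $q^{s}/(q)_s=1/(q)_s-1/(q)_{s-1}$ to rewrite $V_n$ as $U_n$ minus a reindexed sum $V_n'$, so that the $U_n$'s cancel and $E_n=V_n'-W_n$; then reindex $s\mapsto s+1$ in $V_n'$ and apply $1/(q)_{N}=(1-q^{N+1})/(q)_{N+1}$, which reproduces exactly $E_{n+1}$ weighted by $-q^{\,n}$.

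Iterating gives $E_0=(-1)^mq^{\binom{m}{2}}E_m$ for every $m\geq 0$. Since each $E_m$ is a well-defined element of $\ZZ[z][[q]]$ whose $q$-order grows with $m$ (the minimal contribution, from $(r,s)=(1,0)$, already has order $\geq 3m+3$), the right-hand side tends to $0$ in the $q$-adic topology, forcing $E_0=0$ and hence $P=Q$. I expect the main obstacle to be pinning down the precise self-similar normalization of $E_n$ so that the three bookkeeping steps regenerate $E_{n+1}$ with exactly the factor $-q^{\,n}$; once that is correct, the vanishing follows immediately from the formal limit. As a consistency check one can verify that the two lowest-order contributions to $E_0$, namely $(r,s)=(1,0)$ and $(r,s)=(1,1)$, each of order $zq^{3}$, cancel.
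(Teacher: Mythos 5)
Your proposal is correct, and up to the point of producing a single combined sum it runs parallel to the paper: after your shift $r\mapsto r-1$ in $zqS_5(2\mid 1)$, your $E_0$ with numerator $1-q^s-q^{r+s+1}$ is exactly the sum the paper reaches (the paper merely extracts the coefficient of $z^{r+1}$ first, which is cosmetic). Where you genuinely diverge is the finishing move. The paper kills the combined sum in one step by factoring the numerator, $1-q^s-q^{r+s+2}=(1-q^s)(1-q^{r+s+2})-q^{r+2s+2}$, absorbing both factors into $(q)_{s}$ and $(q)_{r+s+2}$ and reindexing $s\mapsto s+1$ to get an exact termwise cancellation --- no limit is needed. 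You instead split off only the $1-q^s$ piece, reindex, recognize the residue as $-q^{n}E_{n+1}$, and conclude from $E_0=(-1)^mq^{\binom{m}{2}}E_m$ together with the unbounded $q$-order of the right-hand side. I verified your descent relation $E_n=-q^nE_{n+1}$ and it is sound, as is the $q$-adic limit argument (your order bound $3m+3$ is correct because the $s=0$ terms degenerate to $-q^{r+s+m+1}$, though all you need is that the order grows with $m$). Both proofs are built from the same two elementary moves, $q^s/(q)_s=1/(q)_s-1/(q)_{s-1}$ and $1/(q)_N=(1-q^{N+1})/(q)_{N+1}$; the paper's factorization simply terminates immediately, so your infinite descent, while valid and a nice self-contained mechanism, is more machinery than this particular identity requires.
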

\begin{proof}
The coefficient of $z^0$ on the LHS is easily seen to equal $0$.
Now let $r\geq 0$.
We have:
\begin{align*}
[z^{r+1}]&\left(zqS_5(2\mid 1) - S_5(0\mid 1)+S_5(1\mid 1)\right)\\
&=
\sum_{s\geq 0}\dfrac{q^{r^2+rs+s^2+2r+s+1}}{(q)_{r}(q)_s(q)_{r+s+1}}
-\sum_{s\geq 0}\dfrac{q^{r^2+rs+s^2+2r+2s+1}}{(q)_{\mathbf{r+1}}(q)_s(q)_{r+s+2}}
+\sum_{s\geq 0}\dfrac{q^{r^2+rs+s^2+3r+2s+2}}{(q)_{\mathbf{r+1}}(q)_s(q)_{r+s+2}}
\\
&=
\sum_{s\geq 0}\dfrac{q^{r^2+rs+s^2+2r+s+1}}{(q)_{r}(q)_s(q)_{r+s+1}}
-\sum_{s\geq 0}\dfrac{q^{r^2+rs+s^2+2r+2s+1}}{(q)_{\mathbf{r}}(q)_s(q)_{r+s+2}}\\
&=\sum_{s\geq 0}\dfrac{q^{r^2+rs+s^2+2r+s+1}(1-q^{r+s+2}-q^s)}{(q)_{r}(q)_s(q)_{r+s+2}}\\
&=\sum_{s\geq 0}\dfrac{q^{r^2+rs+s^2+2r+s+1}(1-q^s)(1-q^{r+s+2})}{(q)_{r}(q)_s(q)_{r+s+2}}
-\sum_{s\geq 0}\dfrac{q^{r^2+rs+s^2+3r+3s+3}}{(q)_{r}(q)_s(q)_{r+s+2}}.
\end{align*}
Now observe:
\begin{align*}
\sum_{s\geq 0}\dfrac{q^{r^2+rs+s^2+2r+s+1}(1-q^s)(1-q^{r+s+2})}{(q)_{r}(q)_s(q)_{r+s+2}}
=\sum_{\mathbf{s\geq 1}}\dfrac{q^{r^2+rs+s^2+2r+s+1}}{(q)_{r}(q)_{{s-1}}(q)_{{r+s+1}}}
=\sum_{\mathbf{s\geq 0}}\dfrac{q^{r^2+rs+s^2+{3r+3s+3}}}{(q)_{r}(q)_{{s}}(q)_{{r+s+2}}}.
\end{align*}

\end{proof}
Combined with Theorem \ref{thm:initcond}, we have established Conjecture \ref{conj:main}
 compositions of $2$.

\subsection{Mod \texorpdfstring{$6$}{6}}

The arrangement is as follows.
\begin{align}
\begin{array}{ll}
3,0,0 & \\
2,1,0/2,0,1  & 1,1,1\\
\hline
\end{array}
\end{align}
The identities are:
\begin{align}
H_{(3,0,0)} &= S_6(1\mid 1)\\
H_{(2,1,0)} &= S_6(0\mid 1)\\
H_{(2,0,1)} &= S_6(1\mid 0)-q(1-z)S_6(2\mid 1)\\
H_{(1,1,1)} &= S_6(0\mid 0)-qS_6(1\mid 1).
\end{align}

We now show that these give unique solutions to \eqref{eqn:Hrec} with $r=2$ and $\ell=3$.
The recurrences for $H_{(3,0,0)}$, $H_{(2,0,1)}$ are satisfied immediately.
Before proving the remaining recurrences, we first find and prove some
relations satisfied by the sums.

\begin{lem} For $A\in\ZZ$, we have:
\begin{align}
S_6(A \mid 0) - (1+q)S_6(1+A \mid 1)   -zq^{1+A}S_6(2+A\mid -1) = 0
\tag{\mbox{$R_1$}}
\label{eqn:m6r1}
\end{align}
\end{lem}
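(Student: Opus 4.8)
The plan is to prove the identity by extracting the coefficient of each power of $z$ and reducing to a one-parameter family of single-variable $q$-series identities. Writing out the three summands from the definition of $S_6=S_{3k}$ with $k=2$, namely
\[
S_6(A\mid B)=\sum_{r,s\geq 0}\frac{z^{r}q^{r^2-rs+s^2+Ar+Bs}}{(q)_{r+s}(q)_{r+s+1}}\qbin{r+s}{r}_{q^3},
\]
I would first reindex the third term $zq^{1+A}S_6(2+A\mid -1)$ by $r\mapsto r-1$. The key (and pleasantly clean) bookkeeping is that the prefactor $q^{1+A}$ exactly absorbs the shift: the exponent $(r-1)^2-(r-1)s+s^2+(2+A)(r-1)-s+1+A$ collapses to $r^2-rs+s^2+Ar$, so that after reindexing all three terms carry the common weight $z^{r}q^{r^2-rs+s^2+Ar}$. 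This is essentially the reason the coefficients $(1+q)$ and $zq^{1+A}$ appear in the statement, and it is the engine of the whole argument.

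Since $A$ enters only through the factor $q^{Ar}$, the relation holds for every $A\in\ZZ$ simultaneously once I show that the coefficient of $z^{r}$ vanishes for each fixed $r\geq 0$. Collecting terms, this coefficient is $q^{r^2+Ar}E_r$, where
\[
E_r=\sum_{s\geq 0}q^{s^2-rs}\left[\frac{1-(1+q)q^{r+s}}{(q)_{r+s}(q)_{r+s+1}}\qbin{r+s}{r}_{q^3}-\frac{1}{(q)_{r-1+s}(q)_{r+s}}\qbin{r-1+s}{r-1}_{q^3}\right],
\]
the second bracketed term arising from the reindexed third sum (and being absent for $r=0$). Thus the entire lemma reduces to proving $E_r=0$ for all $r\geq 0$.

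To establish $E_r=0$ I would exhibit a telescoping in $s$. Using $(q)_{n+1}=(1-q^{n+1})(q)_n$ together with the base-$q^3$ Pascal relation $\qbin{r+s}{r}_{q^3}=\tfrac{1-q^{3(r+s)}}{1-q^{3r}}\qbin{r-1+s}{r-1}_{q^3}$, the bracket collapses to a single rational factor times a common Pochhammer/$q^3$-binomial prefactor, after which I seek a sequence $T_r(s)$ with $q^{s^2-rs}\,[\text{bracket}]=T_r(s)-T_r(s+1)$, $T_r(0)=0$, and $T_r(s)\to 0$. The case $r=0$ is the model and can be done by hand: there the second term drops out, and writing $1-(1+q)q^{s}=(1-q^{s+1})-q^{s}$ gives $q^{s^2}\bigl[(q)_s^{-2}-q^{s}(q)_s^{-1}(q)_{s+1}^{-1}\bigr]=T_0(s)-T_0(s+1)$ with $T_0(s)=q^{s^2}/\bigl((q)_{s-1}(q)_s\bigr)$ for $s\geq 1$ and $T_0(0)=0$, whence $E_0=T_0(0)=0$.

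The main obstacle is producing the telescoper $T_r(s)$ for general $r\geq 1$, where the base-$q^3$ binomials must be carried along; a short expansion (I checked $E_1=0$ through order $q^6$) shows the cancellation is genuinely across the sum rather than term-by-term, so some form of creative telescoping is unavoidable. I expect $T_r(s)$ to have the shape $q^{s^2-rs+c_r}(q)_{r+s-1}^{-1}(q)_{r+s}^{-1}\qbin{r+s-1}{r}_{q^3}$ for an explicit constant $c_r$ (with $c_0=0$), specializing correctly at $r=0$; verifying the telescoping identity then amounts to a finite manipulation with the Pascal rule and $(q)_{n+1}=(1-q^{n+1})(q)_n$, precisely the kind of routine check that the verification program in Appendix \ref{app:verify} automates. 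The remaining relations in this section should succumb to the same three-step scheme: reindex to a common weight, reduce to a per-$r$ single sum, and telescope.
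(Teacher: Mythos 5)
Your setup is correct and is essentially the paper's: reducing to $A=0$ (the paper phrases this as $z\mapsto zq^{A}$), extracting the coefficient of each $z^{r}$, and reindexing the third sum so that all three terms carry the common weight $z^{r}q^{r^2-rs+s^2+Ar}$; your telescoping proof that $E_0=0$ is also valid and is a reasonable elementary substitute for the paper's appeal to Durfee squares and rectangles. The genuine gap is the one you yourself flag: for $r\geq 1$ the telescoper $T_r(s)$ is only conjectured, supported by a numerical check of $E_1$ through order $q^{6}$, and the ``finite manipulation'' that would verify it is never carried out. That manipulation is the entire content of the lemma for positive powers of $z$. Concretely, after applying the $q^{3}$-Pascal rule $\qbin{r+s}{r}_{q^3}=\tfrac{1-q^{3(r+s)}}{1-q^{3r}}\qbin{r+s-1}{r-1}_{q^3}$ and writing $(q)_{r+s-1}^{-1}(q)_{r+s}^{-1}=(1-q^{r+s})(1-q^{r+s+1})(q)_{r+s}^{-1}(q)_{r+s+1}^{-1}$, one must check the polynomial identity
\begin{align*}
\bigl(1-(1+q)q^{r+s}\bigr)\bigl(1-q^{3(r+s)}\bigr)&-(1-q^{r+s})(1-q^{r+s+1})(1-q^{3r})\\
&=q^{3r}(1-q^{3s})(1-q^{r+s})(1-q^{r+s+1})-q^{2(r+s)+1}\bigl(1-q^{3(r+s)}\bigr),
\end{align*}
and then observe that the first term on the right vanishes at $s=0$ and, under $s\mapsto s+1$, reproduces the second term at $s$; this reindexing is exactly what the paper performs.

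The good news is that your guessed telescoper has the right form: carrying out the identity above yields $T_r(s)=q^{s^2-rs+3r}\,(q)_{r+s-1}^{-1}(q)_{r+s}^{-1}\qbin{r+s-1}{r}_{q^3}$, i.e.\ your constant is $c_r=3r$, with $T_r(0)=0$ because $\qbin{r-1}{r}_{q^3}=0$. So the plan would succeed, but as submitted the proposal establishes the lemma only for the coefficient of $z^{0}$ and leaves the coefficients of $z^{r}$ for $r\geq 1$ unproved.
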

\begin{proof}
It is enough to prove the case for $A=0$. The $A\neq 0$ case can be obtained from this by
considering $z\mapsto zq^A$.
First consider the constant term (w.r.t.\ $z$) in the LHS.
This is --
\begin{align*}
\sum_{s\geq 0} &\dfrac{q^{s^2}(1-q^s-q^{s+1})}{(q)_{s}(q)_{s+1}}
=\sum_{s\geq 0} \dfrac{q^{s^2}}{(q)^2_{s}}-\sum_{s\geq 0} \dfrac{q^{s^2+s}}{(q)_{s}(q)_{s+1}}=0,
\end{align*}
where the last equality follows since both the sums equal $1/(q)_{\infty}$; first due to Durfee squares,
second due to Durfee rectangles.
Now, with $r\geq 0$, we have:
\begin{align*}
[z^{r+1}]&\left(S_6(0 \mid 0) - (1+q)S_6(1 \mid 1)   -zqS_6(2\mid -1)\right)\\
&=\sum_{s\geq 0}\dfrac{q^{r^2-rs+s^2+2r-s+1}(1-q^{r+s+1}-q^{r+s+2})}{(q)_{r+s+1}(q)_{r+s+2}}\qbin{r+1+s}{r+1}_{q^3}
-\sum_{s\geq 0}\dfrac{q^{r^2-rs+s^2+2r-s+1}}{(q)_{r+s}(q)_{r+s+1}}\qbin{r+s}{r}_{q^3}\\
&=\sum_{s\geq 0}\dfrac{q^{r^2-rs+s^2+2r-s+1}}{(q)_{r+s+1}(q)_{r+s+2}(1-q^{3r+3})}\qbin{r+s}{r}_{q^3}\\
&\quad\quad\quad\times  \left((1-q^{3r+3s+3})(1-q^{r+s+1}-q^{r+s+2})-(1-q^{r+s+1})(1-q^{r+s+2})(1-q^{3r+3})\right)\\
&=\sum_{s\geq 0}\dfrac{q^{r^2-rs+s^2+2r-s+1}}{(q)_{r+s+1}(q)_{r+s+2}(1-q^{3r+3})}\qbin{r+s}{r}_{q^3}\\
&\quad\quad\quad\times  \left(
q^{3r+3}(1-q^{3s})(1-q^{r+s+1})(1-q^{r+s+2}) - q^{2r+2s+3}(1-q^{3r+3s+3})
	\right)\\
&=	\sum_{\mathbf{s\geq 1}}\dfrac{q^{r^2-rs+s^2+5r-s+4}(q^3;q^3)_{r+s}}{(q)_{r+s}(q)_{r+s+1}(q^3;q^3)_{r+1}(q^3;q^3)_{s-1}}
-\sum_{s\geq 0}\dfrac{q^{r^2-rs+s^2+4r+s+4}}{(q)_{r+s+1}(q)_{r+s+2}}\qbin{r+s+1}{r+1}_{q^3}\\
&=	\sum_{\mathbf{s\geq 0}}\dfrac{q^{r^2-rs+s^2+4r+s+4}}{(q)_{r+s+1}(q)_{r+s+2}}\qbin{r+s+1}{r}_{q^3}
-\sum_{s\geq 0}\dfrac{q^{r^2-rs+s^2+4r+s+4}}{(q)_{r+s+1}(q)_{r+s+2}}\qbin{r+s+1}{r}_{q^3}\\
&=0.
\end{align*}

\end{proof}

\begin{lem}
We have:
\begin{align}
2S_6(1\mid 1) -S_6(2\mid -1)+q(1-zq)S_6(3\mid 0)  = 0
\tag{\mbox{$R_2$}}
\label{eqn:m6r2}
\end{align}
\end{lem}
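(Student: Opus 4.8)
The plan is to treat the claimed identity as an equality of formal power series in $z$ and to show that every coefficient of $z^n$ vanishes, following verbatim the template of the proof of \eqref{eqn:m6r1}. Since $k=2$, all sums collapse to the single-index form
\begin{align}
S_6(A\mid B)=\sum_{r,s\geq 0}\dfrac{z^{r}q^{r^2-rs+s^2+Ar+Bs}}{(q)_{r+s}(q)_{r+s+1}}\qbin{r+s}{r}_{q^3},
\end{align}
and after writing $q(1-zq)=q-zq^2$ the left-hand side becomes $2S_6(1\mid1)-S_6(2\mid-1)+qS_6(3\mid0)-zq^2S_6(3\mid0)$. First I would isolate the constant term in $z$, and then treat the coefficient of $z^{r+1}$ for each $r\geq 0$; note that the term $-zq^2S_6(3\mid0)$ contributes only to the latter, namely $-q^2[z^r]S_6(3\mid0)$, which I will also rewrite using the shift \eqref{eqn:Sshift}.

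For the constant term, setting $r=0$ makes the $q$-binomial equal to $1$, and the extraction reduces the identity to
\begin{align}
2\sum_{s\geq 0}\dfrac{q^{s^2+s}}{(q)_{s}(q)_{s+1}}-\sum_{s\geq 0}\dfrac{q^{s^2-s}}{(q)_{s}(q)_{s+1}}+q\sum_{s\geq 0}\dfrac{q^{s^2}}{(q)_{s}(q)_{s+1}}=0.
\end{align}
The first sum equals $1/(q)_\infty$ by Durfee rectangles; the third equals $(1+q)/(q)_\infty$, which is precisely the constant term of \eqref{eqn:m6r1} with $A=0$. I would then reduce the middle sum to $(2+q+q^2)/(q)_\infty$ via the elementary step $\tfrac{1}{(q)_{s+2}}=\tfrac{1}{(q)_{s+1}}+\tfrac{q^{s+2}}{(q)_{s+2}}$ together with an index shift $s\mapsto s-1$, after which the combination becomes $\bigl(2-(2+q+q^2)+q(1+q)\bigr)/(q)_\infty=0$.

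For the coefficient of $z^{r+1}$ with $r\geq 0$, I would extract each piece explicitly. The contributions from $2S_6(1\mid1)$, $-S_6(2\mid-1)$ and $qS_6(3\mid0)$ all carry the factor $\qbin{r+1+s}{r+1}_{q^3}/[(q)_{r+s+1}(q)_{r+s+2}]$, while $-q^2[z^r]S_6(3\mid0)$ carries $\qbin{r+s}{r}_{q^3}/[(q)_{r+s}(q)_{r+s+1}]$. Using
\begin{align}
\dfrac{1}{(q)_{r+s+1}(q)_{r+s+2}}\qbin{r+1+s}{r+1}_{q^3}=\dfrac{1}{(q)_{r+s}(q)_{r+s+1}}\qbin{r+s}{r}_{q^3}\cdot\dfrac{1-q^{3r+3s+3}}{(1-q^{3r+3})(1-q^{r+s+1})(1-q^{r+s+2})},
\end{align}
I would place all four contributions over the common factor $(1-q^{3r+3})(1-q^{r+s+1})(1-q^{r+s+2})$, so that $[z^{r+1}]$ of the left-hand side becomes a single sum $\sum_{s\geq 0}\tfrac{q^{r^2-rs+s^2}}{(q)_{r+s}(q)_{r+s+1}}\qbin{r+s}{r}_{q^3}\,N(r,s)\big/\!\left[(1-q^{3r+3})(1-q^{r+s+1})(1-q^{r+s+2})\right]$ for an explicit Laurent polynomial $N(r,s)$ in $q$.

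The hard part will be showing that this last sum vanishes, which it does not termwise but only after reindexing, exactly as in \eqref{eqn:m6r1}. Concretely, I expect $N(r,s)$ to factor in the same shape as there, namely as $q^{a}(1-q^{3s})(1-q^{r+s+1})(1-q^{r+s+2})-q^{b}(1-q^{3r+3s+3})(\cdots)$ with explicit exponents; this is consistent with the boundary value $N(r,0)=q^{3r+2}(1+q^{r+2})(1-q^{3r+3})$, which is divisible by $1-q^{3r+3s+3}$ at $s=0$ and in which the $(1-q^{3s})$-piece drops out. In the first piece the factors $(1-q^{r+s+1})(1-q^{r+s+2})$ cancel the surviving denominators and $(1-q^{3s})/(q^3;q^3)_s=1/(q^3;q^3)_{s-1}$ turns it into a sum over $s\geq 1$; in the second piece $(1-q^{3r+3s+3})$ restores the upper $q$-binomial $\qbin{r+s+1}{r+1}_{q^3}$. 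Shifting $s\mapsto s-1$ in the first then makes the two cancel. Pinning down the exact factorization of $N(r,s)$ and verifying this shift is the only genuinely laborious step; as a sanity check I have confirmed that the coefficients of $z^0$ and $z^1$ vanish through order $q^4$, which validates the bookkeeping above.
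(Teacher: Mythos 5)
Your setup and your treatment of the constant term are correct and follow the paper's proof essentially verbatim: the extraction of $[z^{0}]$ and the evaluations $\sum_{s}q^{s^2+s}/((q)_s(q)_{s+1})=1/(q)_\infty$, $\sum_s q^{s^2}/((q)_s(q)_{s+1})=(1+q)/(q)_\infty$ and $\sum_s q^{s^2-s}/((q)_s(q)_{s+1})=(2+q+q^2)/(q)_\infty$ are all right (the paper organizes the last two slightly differently, reducing everything to $\sum_s q^{s^2-s}/(q)_s^2=2/(q)_\infty$ via the same $s\mapsto s-1$ shift). Your reduction of $[z^{r+1}]$ of the left-hand side to a single sum with numerator $N(r,s)$ over the common denominator $(1-q^{3r+3})(1-q^{r+s+1})(1-q^{r+s+2})$ is likewise exactly the paper's intermediate display.

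The genuine gap is that for $r\geq 0$ the entire content of the lemma is the explicit rearrangement of $N(r,s)$, and you only conjecture its shape, supported by the boundary value $N(r,0)$ and a numerical check to order $q^4$; that is not a proof. Moreover your guessed form, with a single monomial $q^{a}$ multiplying $(1-q^{3s})(1-q^{r+s+1})(1-q^{r+s+2})$, cannot be literally attained. The identity you actually need is
\begin{align*}
N(r,s) &= (1-q^{3r+3s+3})(2q^{3r+2}-q^{4r-2s+3}+q^{5r-s+5}) - q^{3r+2}(1-q^{r+s+1})(1-q^{r+s+2})(1-q^{3r+3})\\
&= q^{3r+2}\bigl(1+q^{r+s+2}\bigr)(1-q^{3r+3s+3}) - q^{4r-2s+3}\bigl(1+q^{r+s+1}\bigr)(1-q^{r+s+1})(1-q^{r+s+2})(1-q^{3s}),
\end{align*}
so both prefactors are binomials, $q^{4r-2s+3}+q^{5r-s+4}$ and $q^{3r+2}+q^{4r+s+4}$; this is, up to an overall sign (immaterial here, since the sum is being shown to vanish), the decomposition the paper writes down. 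Even granting it, one must still verify the nontrivial coincidence that after using $(1-q^{3s})/(q^3;q^3)_s=1/(q^3;q^3)_{s-1}$ and reindexing $s\mapsto s+1$, the prefactor $q^{4r-2s+3}+q^{5r-s+4}$ combined with the shift in the quadratic exponent $r^2-rs+s^2$ reproduces exactly $(q^{3r+2}+q^{4r+s+4})\qbin{r+s+1}{r+1}_{q^3}$ over the original denominator; that is where the two pieces cancel. Until the displayed polynomial identity and this reindexing are actually carried out, the lemma is not proved.
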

\begin{proof}
To check that the constant term with respect to $z$ is zero, we need the following identity:
\begin{align*}
\sum_{s\geq 0}\dfrac{q^{s^2}(2q^s-q^{-s}+q)}{(q)_{s}(q)_{s+1}}=0.
\end{align*}
For this, note that
\begin{align*}
\sum_{s\geq 0}\dfrac{2q^{s^2+s}}{(q)_{s}(q)_{s+1}}=\dfrac{2}{(q)_\infty},
\end{align*}
and,
\begin{align*}
\left(\sum_{s\geq 0}\right.&\left.\dfrac{q^{s^2}(q^{-s}-q)}{(q)_{s}(q)_{s+1}}\right)-\frac{1}{(q)_\infty}=
\left(\sum_{s\geq 0}\dfrac{q^{s^2-s}}{(q)_{s}^2}\right) - \frac{1}{(q)_\infty}
=\sum_{s\geq 0}\dfrac{q^{s^2-s}}{(q)_{s}^2} - \sum_{s\geq  0}\dfrac{q^{s^2}}{(q)_s^2}\\
&=\sum_{s\geq 0}\dfrac{q^{s^2-s}(1-q^s)}{(q)_{s}^2}
=\sum_{s\geq 1}\dfrac{q^{s^2-s}}{(q)_{s-1}(q)_s}
=\sum_{s\geq 0}\dfrac{q^{s^2+s}}{(q)_{s}(q)_{s+1}}
=\dfrac{1}{(q)_\infty},
\end{align*}
as required.

Now, with $r\geq 0$, we have --
\begin{align*}
&[z^{r+1}]\left(2S_6(1\mid 1) -S_6(2\mid -1)+q(1-zq)S_6(3\mid 0)\right)\\
&=\sum_{s\geq 0}
\dfrac{q^{r^2-rs+s^2}(2q^{3r+2} -q^{4r-2s+3}+q^{5r-s+5})}{(q)_{r+s+1}(q)_{r+s+2}}\qbin{r+s+1}{r+1}_{q^3}
-
\sum_{s\geq 0}
\dfrac{q^{r^2-rs+s^2+3r+2}}{(q)_{r+s}(q)_{r+s+1}}\qbin{r+s}{r}_{q^3}\\
&=
\sum_{s\geq 0}
\dfrac{q^{r^2-rs+s^2}}{(q)_{r+s+1}(q)_{r+s+2}(1-q^{3r+3})}\qbin{r+s}{r}_{q^3}\\
&\quad\times\left( (1-q^{3r+3s+3})(2q^{3r+2} -q^{4r-2s+3}+q^{5r-s+5})-q^{3r+2}(1-q^{r+s+1})(1-q^{r+s+2})(1-q^{3r+3})\right)\\
&=
\sum_{s\geq 0}
\dfrac{q^{r^2-rs+s^2}}{(q)_{r+s+1}(q)_{r+s+2}(1-q^{3r+3})}\qbin{r+s}{r}_{q^3}\\
&\quad\times\left( 
(q^{4r-2s+3}+q^{5r-s+4})(1-q^{r+s+1})(1-q^{r+s+2})(1-q^{3s})-(q^{3r+2}+q^{4r+s+4})(1-q^{3r+3s+3})
\right).
\end{align*}
We then have:
\begin{align*}
&\sum_{s\geq 0}
\dfrac{q^{r^2-rs+s^2}(q^{4r-2s+3}+q^{5r-s+4})(1-q^{r+s+1})(1-q^{r+s+2})(1-q^{3s})}{(q)_{r+s+1}(q)_{r+s+2}(1-q^{3r+3})}\qbin{r+s}{r}_{q^3}\\
&=
\sum_{\mathbf{s\geq 1}}
\dfrac{q^{r^2-rs+s^2}(q^{4r-2s+3}+q^{5r-s+4})(q^3;q^3)_{r+s}}{(q)_{r+s}(q)_{r+s+1}(q^3;q^3)_{r+1}(q^3;q^3)_{s-1}}
=
\sum_{\mathbf{s\geq 0}}
\dfrac{q^{r^2-rs+s^2}(q^{3r+2}+q^{4r+s+4})(1-q^{3r+3s+3})}{(q)_{r+s+1}(q)_{r+s+2}(1-q^{3r+3})}\qbin{r+s}{r}_{q^3}.
\end{align*}

\end{proof} 
We finally need the following relation.
\begin{lem} For $A\in\ZZ$ we have:
\begin{align}
S_6(A\mid 0)&-qS_6(A+1\mid 1)-S_6(A+3\mid 0)+qS_6(A+4\mid 1)\notag\\
&-(zq^{1+A}S_6(A+2,-1)+zq^{2+A}S_6(A+3\mid 0)+zq^{3+A}S_6(A+4\mid 1))=0.
\tag{\mbox{$R_3$}}
\label{eqn:m6r3}
\end{align}
\begin{proof}
The proof of the $A=0$ case is as follows.
\begin{align*}
S_6&(0\mid 0)-qS_6(1\mid 1)-S_6(3\mid 0)+qS_6(4\mid 1)\\
&=
\sum_{r,s\geq 0}
z^{r}\dfrac{q^{r^2-rs+s^2}(1-q^{r+s+1}-q^{3r}+q^{4r+s+1})}{(q)_{r+s}(q)_{r+s+1}}\qbin{r+s}{r}_{q^3}
\\
&
=\sum_{r,s\geq 0}
z^{r}\dfrac{q^{r^2-rs+s^2}(1-q^{r+s+1})(1-q^{3r})}{(q)_{r+s}(q)_{r+s+1}}\qbin{r+s}{r}_{q^3}
\\
&
=\sum_{\mathbf{r\geq 1},s\geq 0}
z^{r}\dfrac{q^{r^2-rs+s^2}(q^3;q^3)_{r+s}}{(q)_{r+s}(q)_{r+s}(q^3;q^3)_{s}(q^3;q^3)_{r-1}}
\\
&
=\sum_{\mathbf{r\geq 0},s\geq 0}
z^{r+1}\dfrac{q^{r^2-rs+s^2+2r-s+1}(1-q^{3r+3s+3})}{(1-q^{r+s+1})(q)_{r+s}(q)_{r+s+1}}\qbin{r+s}{r}_{q^3}
\\
&
=\sum_{r,s\geq 0}
z^{r+1}\dfrac{q^{r^2-rs+s^2+2r-s+1}(1+q^{r+s+1}+q^{2r+2s+2})}{(q)_{r+s}(q)_{r+s+1}}\qbin{r+s}{r}_{q^3}
\\
&
=zqS_6(2\mid -1)+zq^{2}S_6(3\mid 0)+zq^{3}S_6(4\mid 1).
\end{align*}
\end{proof}
\end{lem}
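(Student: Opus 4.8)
The plan is to follow the template of the proofs of \eqref{eqn:m6r1} and \eqref{eqn:m6r2}. First I would reduce to the case $A=0$: every $S_6$ in \eqref{eqn:m6r3} has its first argument shifted uniformly by $A$, and each prefactor $zq^{1+A}, zq^{2+A}, zq^{3+A}$ carries the matching power of $q$, so by \eqref{eqn:Sshift} the substitution $z\mapsto zq^{A}$ converts the general relation into the $A=0$ relation. Hence it suffices to establish
\begin{align*}
S_6(0\mid 0)-qS_6(1\mid 1)-S_6(3\mid 0)+qS_6(4\mid 1)=zqS_6(2\mid -1)+zq^{2}S_6(3\mid 0)+zq^{3}S_6(4\mid 1).
\end{align*}

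Next I would gather the four terms on the left into a single double sum over $r,s\ge 0$. They share the Gaussian weight $q^{r^2-rs+s^2}$, the denominator $(q)_{r+s}(q)_{r+s+1}$, and the factor $\qbin{r+s}{r}_{q^3}$, so combining them leaves the numerator $1-q^{r+s+1}-q^{3r}+q^{4r+s+1}$. The crux of the argument is the factorization $1-q^{r+s+1}-q^{3r}+q^{4r+s+1}=(1-q^{r+s+1})(1-q^{3r})$. The first factor collapses $(q)_{r+s+1}$ to $(q)_{r+s}$; the second kills the $r=0$ summand and reduces $(q^{3};q^{3})_{r}$ to $(q^{3};q^{3})_{r-1}$ in the $q^3$-binomial, so only $r\ge 1$ survives.

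After reindexing $r\mapsto r+1$ I would pick up an overall factor of $z$ (matching the $z$ on the right-hand side) and the exponent $r^2-rs+s^2+2r-s+1$. The $q^3$-binomial regrows, contributing $(1-q^{3r+3s+3})\qbin{r+s}{r}_{q^3}$, while the denominator leaves behind a factor $(1-q^{r+s+1})^{-1}$; applying $1-x^{3}=(1-x)(1+x+x^{2})$ with $x=q^{r+s+1}$ turns the ratio $(1-q^{3r+3s+3})/(1-q^{r+s+1})$ into $1+q^{r+s+1}+q^{2r+2s+2}$. Splitting the sum according to these three monomials and reading off the exponents, the three pieces are precisely $zqS_6(2\mid -1)$, $zq^{2}S_6(3\mid 0)$ and $zq^{3}S_6(4\mid 1)$, which is the desired right-hand side.

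The only genuine obstacle is spotting the factorization $1-q^{r+s+1}-q^{3r}+q^{4r+s+1}=(1-q^{r+s+1})(1-q^{3r})$; once it is in hand, the remaining steps are forced by the $r\mapsto r+1$ reindexing and the cubic identity, and---in contrast to \eqref{eqn:m6r1} and \eqref{eqn:m6r2}---no separate constant-term verification is required, since the surviving sum manifestly carries the factor $z$.
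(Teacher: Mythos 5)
Your proposal is correct and follows essentially the same route as the paper: reduce to $A=0$ via the shift \eqref{eqn:Sshift}, combine the four left-hand terms into one sum, factor the numerator as $(1-q^{r+s+1})(1-q^{3r})$, reindex $r\mapsto r+1$, and use $(1-q^{3r+3s+3})/(1-q^{r+s+1})=1+q^{r+s+1}+q^{2r+2s+2}$ to split the result into the three right-hand terms. Your observation that no separate constant-term check is needed (since the $(1-q^{3r})$ factor kills the $r=0$ summand) also matches the structure of the paper's argument.
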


These relations now prove the remaining recurrences.

Recurrence for $H_{(1,1,1)}$ amounts to showing:
\begin{align}
-S_6(0\mid 0)+(3+q)S_6(1\mid 1)-(2+zq^2)(1-zq)S_6(3\mid 0)+2q(1-zq^2)(1-zq)S_6(4\mid 1)=0.
\label{eqn:rec111}
\end{align}
It is easy to see that
\eqref{eqn:rec111} can be obtained as:
\begin{align*}
(2zq-3)\ref{eqn:m6r1}(0)  +zq\ref{eqn:m6r2} +2(1-zq)\ref{eqn:m6r3}(0).
\end{align*}

Recurrence for $H_{(2,1,0)}$ amounts to:
\begin{align}
-S_6(0\mid 1)+(1+zq)S_6(2\mid 0)-q(1-zq)S_6(3\mid 1)=0,
\end{align}
and
can be obtained as:
\begin{align*}
\ref{eqn:m6r1}(-1)  -\ref{eqn:m6r3}(-1).
\end{align*}

Combined with Theorem \ref{thm:initcond}, we have established Conjecture \ref{conj:main}
for compositions of $3$.

Setting $z\mapsto 1$ and using \eqref{eqn:Hcprod}, we arrive at the following sum-product identities.
\begin{cor}
We have:
\begin{align}
\sum_{r,s\geq 0}
&
\dfrac{q^{r^2-rs+s^2+r+s}}{(q)_{r+s}(q)_{r+s+1}}\qbin{r+s}{r}_{q^3}
=
\dfrac{1}{(q)_\infty}\dfrac{1}{\theta(q^2,q^3;\,q^6)}
=\dfrac{\chi(\Omega(3\Lambda_0))}{(q)_\infty}
\label{eqn:300},
\\
\sum_{r,s\geq 0}
&
\dfrac{q^{r^2-rs+s^2+s}}{(q)_{r+s}(q)_{r+s+1}}\qbin{r+s}{r}_{q^3}
=
\dfrac{1}{(q)_\infty}\dfrac{1}{\theta(q,q^2;\,q^6)}
=\dfrac{\chi(\Omega(2\Lambda_0+\Lambda_1))}{(q)_\infty}
\label{eqn:210},
\\
\sum_{r,s\geq 0}
&
\dfrac{q^{r^2-rs+s^2}(1-q^{1+r+s})}{(q)_{r+s}(q)_{r+s+1}}\qbin{r+s}{r}_{q^3}
=
\dfrac{1}{(q)_\infty}\dfrac{\theta(q^2;\,q^6)}{\theta(q,q,q^3;\,q^6)}
=\dfrac{\chi(\Omega(\Lambda_0+\Lambda_1+\Lambda_2))}{(q)_\infty}
\label{eqn:111}.
\end{align}
\end{cor}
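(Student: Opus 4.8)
The plan is to obtain all three identities as the $z=1$ specializations of the $(z,q)$-level formulas for the profiles $(3,0,0)$, $(2,1,0)$ and $(1,1,1)$ that we have just verified (thereby establishing Conjecture \ref{conj:main} for every composition of $\ell=3$), combined with the product evaluation \eqref{eqn:Hcprod}. Since $H_{(3,0,0)}$, $H_{(2,1,0)}$, $H_{(2,0,1)}$, $H_{(1,1,1)}$ are now known to be the unique solutions of the Corteel--Welsh system \eqref{eqn:Hrec} at $r=2$, $\ell=3$, evaluating each at $z=1$ simultaneously produces the sum-side (on the left) and, through \eqref{eqn:Hcprod}, the product-side (on the right).

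First I would treat the sum-sides. For $H_{(3,0,0)}=S_6(1\mid 1)$ and $H_{(2,1,0)}=S_6(0\mid 1)$ the substitution $z=1$ is immediate and reproduces verbatim the left-hand sides of \eqref{eqn:300} and \eqref{eqn:210}. For the profile $(1,1,1)$ one has $H_{(1,1,1)}=S_6(0\mid 0)-qS_6(1\mid 1)$; at $z=1$ the two sums run over the same $r,s$, share the denominator $(q)_{r+s}(q)_{r+s+1}$ and the same factor $\qbin{r+s}{r}_{q^3}$, so they merge into a single sum with numerator $q^{r^2-rs+s^2}\bigl(1-q^{1+r+s}\bigr)$, which is exactly the left-hand side of \eqref{eqn:111}. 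Note that $(2,0,1)$ is redundant here, since at $z=1$ the dihedral symmetry identifies it with $(2,1,0)$.

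Next I would evaluate the product-sides. By \eqref{eqn:Hcprod} we have $H_c(1,q)=F_c(1,q)=\tfrac{1}{(q)_\infty}\chi(\Omega(c_0\Lambda_0+c_1\Lambda_1+c_2\Lambda_2))$, and Borodin's formula \eqref{eqn:cylindricprod} with $r=2$, $\ell=3$, $m=6$ evaluates $F_c(1,q)$ as $\tfrac{(q^6;q^6)_\infty^2}{(q)_\infty^3}\,\theta(q^{c_0+1},q^{c_1+1},q^{c_2+1};\,q^6)$; the evaluations for $(3,0,0)$ and $(2,1,0)$ are already recorded in the introduction. The key observation is that in all three cases the stated right-hand side follows from a single theta identity: the residue-class factorization $(q)_\infty=\theta(q;q^6)\,\theta(q^2;q^6)\,(q^3;q^6)_\infty\,(q^6;q^6)_\infty$, together with the reflection $\theta(q^a;q^6)=\theta(q^{6-a};q^6)$ and $\theta(q^3;q^6)=(q^3;q^6)_\infty^2$, combine to yield $\tfrac{(q^6;q^6)_\infty^2}{(q)_\infty^2}\,\theta(q;q^6)^2\,\theta(q^2;q^6)^2\,\theta(q^3;q^6)=1$, from which each of \eqref{eqn:300}, \eqref{eqn:210}, \eqref{eqn:111} is an immediate consequence.

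There is essentially no genuine obstacle: beyond the $(z,q)$-identities already proved, the corollary is pure specialization. Once $z=1$ is inserted the left-hand sides are precisely the displayed sums, and the right-hand sides collapse to the claimed theta-quotients via the one factorization identity above. The only spot demanding any care is the $(1,1,1)$ case, where the two $S_6$ terms must be combined before reading off the numerator $1-q^{1+r+s}$; I expect this merge to be entirely routine, as it is forced by the common denominator and $q^3$-binomial.
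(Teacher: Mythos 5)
Your proposal is correct and is exactly the paper's route: the corollary is obtained by setting $z=1$ in the already-proved mod $6$ formulas for $H_{(3,0,0)}$, $H_{(2,1,0)}$, $H_{(1,1,1)}$ and invoking the product formula \eqref{eqn:Hcprod}, with the $(1,1,1)$ case requiring only the merge of $S_6(0\mid 0)-qS_6(1\mid 1)$ into a single sum. The paper states this in one line; you have merely supplied the routine theta-function bookkeeping (via $(q)_\infty=\theta(q;q^6)\theta(q^2;q^6)(q^3;q^6)_\infty(q^6;q^6)_\infty$), which checks out.
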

Identities \eqref{eqn:300} and \eqref{eqn:111} were established in 
\cite{AndSchWar}.

\subsection{Mod 7} The arrangement is as follows.
\begin{align}
\begin{array}{ll}
4,0,0 & \\
3,1,0/3,0,1  & 2,1,1 \\
\hline
2,2,0 & 
\end{array}
\end{align}
With this arrangement, the identities are:
\begin{align}
H_{(4,0,0)} &= S_7(1\mid 1)\\
H_{(3,1,0)} &= S_7(0\mid 1)\\
H_{(3,0,1)} &= S_7(1\mid 0)-q(1-z)S_7(2\mid 1)\\
H_{(2,1,1)} &= S_7(0\mid 0)-qS_7(1\mid 1)\\
H_{(2,2,0)} &= S_7(-1\mid 1)-zS_7(1 \mid 0).
\end{align} 
Here, the expression for $H_{(2,2,0)}$ has been obtained by solving the recurrence
for $H_{(3,1,0)}$.
We now show that these give unique solutions to \eqref{eqn:Hrec} with $r=2$ and $\ell=4$.
Recurrences for $H_{(4,0,0)}$, $H_{(3,1,0)}$ and $H_{(3,0,1)}$ are seen to be 
satisfied immediately.
To get the remaining recurrences, we prove the following relations.
\begin{lem} For $A\in\ZZ$ we have:
\begin{align}
	S_7(A\mid 0)-S_7(A+1\mid 0)-qS_7(A+1\mid 1)+qS_7(A+2\mid 1)-zq^{A+1}S_7(A+2\mid -1)=0
	\tag{\mbox{$R_1$}}
	\label{eqn:m7r1}
\end{align}
\end{lem}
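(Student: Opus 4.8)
The plan is to reduce to the case $A=0$ and then compare coefficients of powers of $z$. First I would invoke the shift relation \eqref{eqn:Sshift}, which for $k=2$ reads $S_7(\rho\mid\sigma)(zq^A,q)=S_7(\rho+A\mid\sigma)(z,q)$. Applying $z\mapsto zq^A$ to the $A=0$ instance sends $S_7(0\mid 0)\mapsto S_7(A\mid 0)$, $S_7(1\mid 0)\mapsto S_7(A+1\mid 0)$, and so on, while the prefactor $zq$ on the last summand becomes $zq^{A+1}$, reproducing the general statement exactly; this is the same reduction used for \eqref{eqn:m6r1}. Thus it suffices to prove
\begin{align}
S_7(0\mid 0)-S_7(1\mid 0)-qS_7(1\mid 1)+qS_7(2\mid 1)-zqS_7(2\mid -1)=0.
\end{align}

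For this I would use the explicit $k=2$ form, namely $S_7(a\mid b)=\sum_{r,s\geq 0} z^r q^{r^2-rs+s^2+ar+bs}/\big((q)_r(q)_s(q)_{r+s+1}\big)$, and check that every coefficient of $z$ vanishes. The constant term is immediate: only the first four summands carry a $z^0$ piece (the last has an explicit factor $z$), and setting the internal index to $0$ shows that $S_7(0\mid 0)$ cancels $S_7(1\mid 0)$ and that $qS_7(1\mid 1)$ cancels $qS_7(2\mid 1)$ termwise in $s$, since the relevant $r$-exponents collapse at $r=0$.

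The heart of the argument is the coefficient of $z^{r+1}$ for $r\geq 0$. In the first four summands this means fixing the internal index at $r+1$, whereas in the last summand the factor $z$ forces it to be $r$. Placing the first four terms over the common denominator $(q)_{r+1}(q)_s(q)_{r+s+2}$, their combined numerator factor is
\begin{align}
1-q^{r+1}-q^{r+s+2}+q^{2r+s+3}=(1-q^{r+1})(1-q^{r+s+2}),
\end{align}
and absorbing these two factors into $(q)_{r+1}$ and $(q)_{r+s+2}$ collapses the first four terms into $\sum_{s\geq 0} q^{r^2-rs+s^2+2r-s+1}/\big((q)_r(q)_s(q)_{r+s+1}\big)$. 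This is exactly the negative of the $z^{r+1}$-coefficient produced by $-zqS_7(2\mid -1)$ (whose internal exponent is $r^2-rs+s^2+2r-s+1$), so the total vanishes termwise.

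The only non-routine step is spotting the factorization displayed above; once it is in hand the cancellation is exact and termwise in $s$, and no Durfee-square or Rogers--Ramanujan-type summation of the kind needed for the mod $6$ relations \eqref{eqn:m6r2} and \eqref{eqn:m6r3} is required. I therefore anticipate no real obstacle beyond carefully tracking the $z$-degree in the shifted final term.
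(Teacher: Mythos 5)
Your proposal is correct and follows essentially the same route as the paper: reduce to $A=0$ via the shift \eqref{eqn:Sshift}, combine the first four sums over a common denominator, and use the factorization of the numerator (the paper writes it as $1-q^{r}-q^{r+s+1}+q^{2r+s+1}=(1-q^{r})(1-q^{r+s+1})$ before re-indexing $r\mapsto r+1$, which is exactly your coefficient-of-$z^{r+1}$ computation) to absorb the factors into the Pochhammer symbols and match $zqS_7(2\mid -1)$ termwise. No substantive difference.
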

\begin{proof}
It is enough to check the $A=0$ case. We have:
\begin{align*}
S_7&(0\mid 0)-S_7(1\mid 0)-qS_7(1\mid 1)+qS_7(2\mid 1)\\
&=\sum_{r,s\geq0}z^r\dfrac{q^{r^2-rs+s^2}(1-q^r-q^{r+s+1}+q^{2r+s+1})}{(q)_{r}(q)_s(q)_{r+s+1}}
=\sum_{r,s\geq0}z^r\dfrac{q^{r^2-rs+s^2}(1-q^r)(1-q^{r+s+1})}{(q)_{r}(q)_s(q)_{r+s+1}}\\
&=\sum_{\mathbf{r\geq 1},s\geq0}z^r\dfrac{q^{r^2-rs+s^2}}{(q)_{r-1}(q)_s(q)_{r+s}}
=\sum_{\mathbf{r\geq 0},s\geq0}z^{r+1}\dfrac{q^{r^2-rs+s^2+2r-s+1}}{(q)_{r}(q)_s(q)_{r+s+1}}=zqS_7(2\mid -1).
\end{align*}
\end{proof}
\begin{rem}
We may also use the $(1-q^s)$ factor to deduce a similar relation.
We will actually deduce a slight variant of this relation in the process proving the following lemma.
\end{rem}

\begin{lem}
For $A\in\ZZ$ we have:
\begin{align}
S_7(A\mid 1)+S_7(A+1\mid 0)-S_7(A+1\mid -1)+qS_7(A+2\mid 0)-qS_7(A+2\mid 1)=0
\tag{\mbox{$R_2$}}
\label{eqn:m7r2}
\end{align}
\end{lem}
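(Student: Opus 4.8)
The plan is to mirror the proof of \eqref{eqn:m7r1}, now exploiting the $r\leftrightarrow s$ symmetry of the $q$-part of the summand rather than its $r$-direction. As a first step I would reduce to the case $A=0$. Every term of the asserted identity has the shape $S_7(\rho\mid\sigma)$ in which the first slot is shifted uniformly by the same amount, so substituting $z\mapsto zq^{A}$ and invoking the shift rule \eqref{eqn:Sshift} (here $k=2$, so $\delta_1=[1]$ and the substitution advances the first argument by $A$) carries the $A=0$ relation into the general one verbatim. Hence it suffices to prove
\begin{align*}
S_7(0\mid 1)+S_7(1\mid 0)-S_7(1\mid -1)+qS_7(2\mid 0)-qS_7(2\mid 1)=0 .
\end{align*}

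Recalling that for $k=2$ one has $S_7(A\mid B)=\sum_{r,s\geq 0}\dfrac{z^{r}q^{r^2-rs+s^2+Ar+Bs}}{(q)_r(q)_s(q)_{r+s+1}}$, I would next compare coefficients of $z^{r}$ for each fixed $r\geq 0$ (no separate treatment of the constant term is needed). This reduces the claim to the single-variable identity
\begin{align*}
\sum_{s\geq 0}\dfrac{q^{s^2-rs}\bigl(q^{s}+q^{r}-q^{r-s}+q^{2r+1}-q^{2r+s+1}\bigr)}{(q)_s(q)_{r+s+1}}=0 .
\end{align*}
The crucial algebraic observation is the factorization of the five-term bracket as
\begin{align*}
q^{s}+q^{r}-q^{r-s}+q^{2r+1}-q^{2r+s+1}=(1-q^{s})\bigl(q^{2r+1}-q^{r-s}\bigr)+q^{s}.
\end{align*}
On the product term I would use $\tfrac{1-q^{s}}{(q)_s}=\tfrac{1}{(q)_{s-1}}$ (which kills $s=0$) and shift $s\mapsto s+1$; this splits it into a positive and a negative piece over $(q)_s(q)_{r+s+2}$. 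The leftover additive $q^{s}$ gives the sum $\sum_{s}q^{s^2-rs+s}/[(q)_s(q)_{r+s+1}]$, which pairs with the negative piece via the Durfee-rectangle cancellation $\tfrac{1}{(q)_{r+s+1}}-\tfrac{1}{(q)_{r+s+2}}=\tfrac{-q^{r+s+2}}{(q)_{r+s+2}}$; the resulting term cancels the positive piece exactly, leaving $0$.

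The step I expect to be the main obstacle is recognizing and then correctly exploiting this factorization. Unlike \eqref{eqn:m7r1}, where the bracket split cleanly as a single product $(1-q^{r})(1-q^{r+s+1})$ and collapsed into one shifted copy of $S_7$, here the bracket carries an irreducible additive tail $q^{s}$ that cannot be absorbed into a single product; it must be transported through the $s\mapsto s+1$ shift and eliminated only at the end by the reciprocal-Pochhammer difference. Keeping the $q$-exponents aligned across the shift is the delicate bookkeeping, and it is precisely the by-product of this computation, namely the $(1-q^{s})$ analogue of \eqref{eqn:m7r1}, that the remark preceding the lemma alludes to.
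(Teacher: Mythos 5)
Your proof is correct and follows essentially the same route as the paper: reduce to $A=0$ via $z\mapsto zq^{A}$, extract the coefficient of $z^{r}$, isolate a $(1-q^{s})$ factor from the numerator polynomial, shift $s\mapsto s+1$, and cancel. The only difference is cosmetic — you factor the five-term bracket as $(1-q^{s})(q^{2r+1}-q^{r-s})+q^{s}$ over the denominator $(q)_{r+s+1}$ and finish with the Pochhammer difference $\frac{1}{(q)_{r+s+1}}-\frac{1}{(q)_{r+s+2}}=\frac{-q^{r+s+2}}{(q)_{r+s+2}}$, whereas the paper first clears the $qS_7(2\mid 0)-qS_7(2\mid 1)$ pair, passes to the common denominator $(q)_{r+s+2}$, and then exploits the double factor $(1-q^{s})(1-q^{r+s+2})$; both groupings yield the identical telescoping cancellation.
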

\begin{proof}
Let $A=0$. Consider the coefficient of $z^r$ with $r\geq 0$.
We have:
\begin{align*}
[z^{r}]\left(S_7(0\mid 1)+S_7(1\mid 0)-S_7(1\mid -1)\right)
=\sum_{s\geq 0}\dfrac{q^{r^2-rs+s^2}(q^r+q^s-q^{r-s})}{(q)_r(q)_s(q)_{r+s+1}},
\end{align*}
and
\begin{align*}
[z^{r}]\left(qS_7(2\mid 0)-qS_7(2\mid 1)\right)&=\sum_{s\geq 0}\frac{q^{r^2-rs+s^2+1}(1-q^s)}{(q)_r(q)_s(q)_{r+s+1}}
=\sum_{\mathbf{s\geq 1}}\frac{q^{r^2-rs+s^2+1}}{(q)_r(q)_{s-1}(q)_{r+s+1}}
=\sum_{\mathbf{s\geq 0}}\frac{q^{r^2-rs+s^2-r+2s+2}}{(q)_r(q)_{s}(q)_{r+s+2}}.
\end{align*}
Adding the two, we have:
\begin{align*}
[z^r]&\left(S_7(0\mid 1)+S_7(1\mid 0)-S_7(1\mid -1)+qS_7(2\mid 0)-qS_7(2\mid 1)\right)\\
&=\sum_{s\geq 0}\dfrac{q^{r^2-rs+s^2}}{(q)_r(q)_s(q)_{r+s+2}}\left(q^{r+2s+2}+(q^r+q^s-q^{r-s})(1-q^{r+s+2})\right)\\
&=\sum_{s\geq 0}\dfrac{q^{r^2-rs+s^2}}{(q)_r(q)_s(q)_{r+s+2}}\left(q^{s}-q^{r-s}(1-q^s)(1-q^{r+s+2})\right)\\
&=\sum_{s\geq 0}\dfrac{q^{r^2-rs+s^2+s}}{(q)_r(q)_s(q)_{r+s+2}}
-\sum_{s\geq 0}\dfrac{q^{r^2-rs+s^2+r-s}(1-q^s)(1-q^{r+s+2})}{(q)_r(q)_s(q)_{r+s+2}}
\end{align*}
Now observe that:
\begin{align*}
\sum_{s\geq 0}\dfrac{q^{r^2-rs+s^2+r-s}(1-q^s)(1-q^{r+s+2})}{(q)_r(q)_s(q)_{r+s+2}}
=\sum_{\mathbf{s\geq 1}}\dfrac{q^{r^2-rs+s^2+r-s}}{(q)_r(q)_{s-1}(q)_{r+s+1}}
=\sum_{\mathbf{s\geq 0}}\dfrac{q^{r^2-rs+s^2+s}}{(q)_r(q)_{s}(q)_{r+s+2}}.
\end{align*}

\end{proof}
We finally require the following --
\begin{lem}
For $A\in\ZZ$,
\begin{align}
-S_7(A\mid 1)+(1+zq^{A+1})S_7(A+2\mid 0)+zq^{A+2}S_7(A+3\mid 0)-qS_7(A+3\mid 1)=0.
\tag{\mbox{$R_3$}}
\label{eqn:m7r3}
\end{align}
\end{lem}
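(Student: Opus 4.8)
The plan is to follow exactly the two-step strategy already used for the relations \eqref{eqn:m7r1} and \eqref{eqn:m7r2}: first reduce to the case $A=0$, and then verify that single identity coefficient-by-coefficient in $z$. Recall that for $k=2$ the sum is
\begin{align*}
S_7(B\mid C)=\sum_{r,s\geq 0}\dfrac{z^{r}q^{r^2-rs+s^2+Br+Cs}}{(q)_{r}(q)_{s}(q)_{r+s+1}}.
\end{align*}

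For the reduction I would observe that the explicit $z$-coefficients $(1+zq^{A+1})$ and $zq^{A+2}$ are engineered precisely so that the whole relation is covariant under $z\mapsto zq^{A}$. Using \eqref{eqn:Sshift} (with $\delta_1=[1]$, so that $S_7(B\mid C)(zq^{A},q)=S_7(B+A\mid C)(z,q)$), applying the substitution $z\mapsto zq^{A}$ to the $A=0$ instance
\begin{align*}
-S_7(0\mid 1)+(1+zq)S_7(2\mid 0)+zq^{2}S_7(3\mid 0)-qS_7(3\mid 1)=0
\end{align*}
sends each summand to the corresponding summand of the general relation: the constant $1$ inside $(1+zq)$ is preserved, while $zq\mapsto zq^{A+1}$ and $zq^{2}\mapsto zq^{A+2}$, and every shift index is raised by $A$. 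Hence it suffices to prove the displayed $A=0$ case, and I would do this by extracting the coefficient of $z^{r}$, remembering that the two summands carrying an explicit factor of $z$ contribute $[z^{r-1}]$ rather than $[z^{r}]$. The constant term ($r=0$) collapses to $\sum_{s\geq 0}q^{s^2}(1-q^s-q^{s+1})/\big((q)_s(q)_{s+1}\big)$, which is exactly the vanishing sum already established inside the proof of the mod $6$ relation \eqref{eqn:m6r1} (both pieces equal $1/(q)_\infty$ by counting partitions through Durfee squares and Durfee rectangles), so the $r=0$ coefficient is $0$.

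For $r\geq 1$ I would gather all five contributions over the common denominator $(q)_{r}(q)_{s}(q)_{r+s+1}$, converting the two $[z^{r-1}]$ terms (whose natural denominator is $(q)_{r-1}(q)_{s}(q)_{r+s}$) via $1/(q)_{r-1}=(1-q^{r})/(q)_{r}$ and $1/(q)_{r+s}=(1-q^{r+s+1})/(q)_{r+s+1}$. After factoring out $q^{r^2-rs+s^2}$, the bare $q^{s}$ contributions cancel and the numerator reduces to
\begin{align*}
(1-q^{s})\,q^{2r}\,(1-q^{r+s+1})-q^{r+2s+1}.
\end{align*}
On the first piece I would apply $(1-q^s)/(q)_s=1/(q)_{s-1}$ and $(1-q^{r+s+1})/(q)_{r+s+1}=1/(q)_{r+s}$, obtaining a sum over $s\geq 1$, and then re-index $s\mapsto s+1$; this turns it into exactly $\sum_{s\geq 0}q^{r^2-rs+s^2+r+2s+1}/\big((q)_{r}(q)_{s}(q)_{r+s+1}\big)$, which cancels the second piece term-for-term. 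Thus every coefficient of $z^{r}$ vanishes, completing the $A=0$ case.

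The only genuine work is the bookkeeping in this last step — matching the two $[z^{r-1}]$ terms against the $[z^{r}]$ terms over a single common denominator and carrying out the one index shift — but this is entirely routine and structurally identical to the manipulations in the proofs of \eqref{eqn:m7r1} and \eqref{eqn:m7r2}. The one conceptually useful point, which I would highlight, is the covariance of the relation under $z\mapsto zq^{A}$; it is what absorbs the explicit $z$-coefficients and collapses the statement for all $A\in\ZZ$ to the single identity at $A=0$.
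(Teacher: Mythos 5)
Your proof is correct and follows essentially the same route as the paper's: reduce to $A=0$ via the shift $z\mapsto zq^{A}$, kill the constant term by the Durfee square/rectangle identity, and for each higher power of $z$ combine everything over one denominator so the numerator becomes $q^{2r}(1-q^{s})(1-q^{r+s+1})-q^{r+2s+1}$, which vanishes after absorbing the two factors into the Pochhammer symbols and shifting $s\mapsto s+1$. The paper's displayed numerator is exactly yours up to the index convention ($[z^{r+1}]$ versus $[z^{r}]$), and it likewise defers the final cancellation to the same shift argument used for \eqref{eqn:m7r2}.
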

\begin{proof}
Let $A=0$.
Coefficient of $z^0$ in the LHS comes from $-S_7(0\mid 1)+S_7(2\mid0)-qS_7(3\mid 1)$
and equals:
\begin{align*}
\sum_{s\geq 0}&\dfrac{q^{s^2}(-q^s+1-q^{s+1})}{(q)_s(q)_{s+1}}
=
-\sum_{s\geq 0}\dfrac{q^{s^2+s}}{(q)_s(q)_{s+1}}
+\sum_{s\geq 0}\dfrac{q^{s^2}}{(q)_s^2}
=\dfrac{1}{(q)_\infty}-\dfrac{1}{(q)_\infty}=0.
\end{align*}
For $r\geq 0$, coefficient of $z^{r+1}$ is easily seen to equal:
\begin{align*}
\sum_{s\geq 0}&\dfrac{q^{r^2-rs+s^2+2r-s+1}(-q^s+q^{2r+2}-q^{3r+s+4})}{(q)_{r+1}(q)_s(q)_{r+s+2}}
+\sum_{s\geq 0}\dfrac{q^{r^2-rs+s^2}(q^{2r+1}+q^{3r+2})}{(q)_{r}(q)_s(q)_{r+s+1}}\\
&=\sum_{s\geq 0}\dfrac{q^{r^2-rs+s^2+3r+3}(q^{r-s}-q^{2r+2}-q^s-q^r+q^{2r+s+2})}{(q)_{r+1}(q)_s(q)_{r+s+2}}\\
&=-\sum_{s\geq 0}\dfrac{q^{r^2-rs+s^2+3r+3}(q^s-q^{r-s}(1-q^s)(1-q^{r+s+2}))}{(q)_{r+1}(q)_s(q)_{r+s+2}}\\
&=0,
\end{align*}
where the proof of the last step is as in the previous lemma.
\end{proof}

Now, the recurrence for $H_{(2,1,1)}$ amounts to proving
\begin{align*}
-S_7(0\mid 0)+S_7(0\mid 1)+S_7(1\mid  0)+q(1+z)S_7(1\mid 1)-S_7(2\mid  0)-qS_7(2\mid 1)+q(1-zq)S_7(3\mid 1)=0
\label{eqn:rec211}
\end{align*}
which can be deduced by considering:
\begin{align*}
-\ref{eqn:m7r1}(0)+zq\ref{eqn:m7r2}(1) - \ref{eqn:m7r3}(0).
\end{align*}

Recurrence for $H_{(2,2,0)}$ requires proving that:
\begin{align*}
-S_7(-1\mid  1)+(1+z)S_7(1\mid  0)+zqS_7(2\mid  0)-qS_7(2\mid 1)=0
\end{align*}
which is simply $\ref{eqn:m7r3}(-1)$.

Initial condition for $H_{(2,2,0)}$ can be quickly deduced from Corollary \ref{cor:initcond}.
Combined with Theorem \ref{thm:initcond}, we have established Conjecture \ref{conj:main}
for compositions of $4$.

\section{Moduli 8, 9, 10}
\label{sec:m8910}
In these moduli, we will observe some new phenomena which are not present in lower moduli,
especially, we will see ``non-terminal'' and ``terminal'' relations.

We shall completely prove the moduli $8$ and $10$ cases. For mod $9$, at the moment 
we do not have a proof, and thus at the end of this section, we present the conjectures.

\subsection{Mod 8}
We have following arrangement.
\begin{align}
\begin{array}{ll}
5,0,0 		& 		 \\
4,1,0/4,0,1 & 3,1,1  \\
3,2,0/3,0,2 & 2,2,1. \\
\hline
\end{array}
\end{align}
We have the following identities:
\begin{align}
H_{(5,0,0)}&=S_8(1,1\mid 1,1)\\
H_{(4,1,0)}&=S_8(0,1\mid 1,1)\\
H_{(3,2,0)}&=S_8(0,0\mid 1,1)\\
H_{(3,1,1)}&=S_8(0,1\mid 0,1)-qS_8(1,1\mid 1,1)\\
H_{(2,2,1)}&=S_8(0,0\mid 0,1)-qS_8(0,1\mid 1,1)\\
H_{(4,0,1)}&=S_8(1,1\mid 0,1)-q(1-z)S_8(2,1\mid 1,1)\\
H_{(3,0,2)}&=S_8(1,1\mid 0,0)-q(1-z)S_8(2,1\mid 0,1).
\end{align}

We now show that these give unique solutions to \eqref{eqn:Hrec} with $r=2$ and $\ell=5$.
The recursions for $H_{(5,0,0)}$, $H_{(4,0,1)}$ are immediately satisfied.
We have two relations emanating from the ``non-terminal'' variables
$r_1$ and $s_1$.
\begin{lem}
For $A,B,C,D\in\ZZ$, we have:
\begin{align}
S_8(A,B\mid C,D)-S_8(A+1,B-1\mid C,D)-zq^{A+1}S_8(A+2,B\mid C-1,D)&=0,
\tag{\mbox{$R_1$}}
\label{eqn:m8r1}\\
S_8(A,B\mid C,D)-S_8(A,B\mid C+1,D-1)-q^{C+1}S_8(A-1,B\mid C+2,D)&=0
\tag{\mbox{$R_2$}}
\label{eqn:m8r2}.
\end{align}
\end{lem}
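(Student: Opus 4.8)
The plan is to prove both relations by one elementary mechanism: each of $R_1$ and $R_2$ records the effect of telescoping a \emph{single} non-terminal summation variable ($r_1$ for $R_1$, $s_1$ for $R_2$), exactly as in the proofs of the relations for moduli $5$, $6$, $7$. I would work throughout with the full quadruple sums defining $S_8$, viewed as formal series in $z$ whose coefficients are (Laurent) series in $q$, with $A,B,C,D$ arbitrary integers; then the extra factors $zq^{A+1}$ and $q^{C+1}$ on the right-hand sides appear automatically from a single index shift, and every step is legitimate coefficientwise.

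For $R_1$, I would first combine $S_8(A,B\mid C,D)$ and $S_8(A+1,B-1\mid C,D)$ over their common index set $\{\,r_1\geq r_2\geq 0,\ s_1\geq s_2\geq 0\,\}$. All factors of the two summands coincide except the monomial $q^{Ar_1+Br_2}$ versus $q^{(A+1)r_1+(B-1)r_2}$, so their difference carries the factor $(1-q^{r_1-r_2})$. The telescoping identity $\tfrac{1-q^{r_1-r_2}}{(q)_{r_1-r_2}}=\tfrac{1}{(q)_{r_1-r_2-1}}$ then kills the $r_1=r_2$ contribution and leaves a sum over $r_1\geq r_2+1$. Substituting $r_1\mapsto r_1+1$ turns the $r_1$-part $r_1^2-r_1s_1+Ar_1$ of the exponent into $(r_1^2-r_1s_1)+(A+2)r_1-s_1+(A+1)$ and turns $z^{r_1}$ into $z^{r_1+1}$. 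The $-s_1$ shifts $C\mapsto C-1$, the linear coefficient shifts $A\mapsto A+2$, the constant $A+1$ and the surplus $z$ factor out as $zq^{A+1}$, and $B,D$ are untouched; what remains is precisely the summand of $S_8(A+2,B\mid C-1,D)$, giving $R_1$.

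For $R_2$ I would run the mirror-image argument on $s_1$. The difference $S_8(A,B\mid C,D)-S_8(A,B\mid C+1,D-1)$ now carries the factor $(1-q^{s_1-s_2})$, which via $\tfrac{1-q^{s_1-s_2}}{(q)_{s_1-s_2}}=\tfrac{1}{(q)_{s_1-s_2-1}}$ restricts the sum to $s_1\geq s_2+1$. Substituting $s_1\mapsto s_1+1$ converts the $s_1$-part $s_1^2-r_1s_1+Cs_1$ into $(s_1^2-r_1s_1)-r_1+(C+2)s_1+(C+1)$. Here the $-r_1$ shifts $A\mapsto A-1$, the linear coefficient shifts $C\mapsto C+2$, and the constant factors out as $q^{C+1}$; since $s_1$ does not interact with the $z$-weight, no new power of $z$ is produced, and the remaining summand is that of $S_8(A-1,B\mid C+2,D)$. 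This is $R_2$.

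I do not expect a genuine obstacle in this lemma: the only care needed is the bookkeeping of the quadratic form under each shift, together with the observation that the terminal data, namely $r_2,s_2$, the numerator $q^{2r_2s_2}$, and the factors $(q)_{r_2}(q)_{s_2}(q)_{r_2+s_2+1}$, are not touched by either telescoping and simply ride along. The real difficulty of the mod $8$ case lies elsewhere, in the forthcoming \emph{terminal} relations (coming from $r_2,s_2$) and in verifying that suitable linear combinations of all these relations reproduce the Corteel--Welsh recursions for the remaining profiles.
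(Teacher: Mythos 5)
Your proof is correct and is essentially identical to the paper's: the paper likewise combines the two sums over the common index set to extract the factor $(1-q^{r_1-r_2})$ (resp.\ $(1-q^{s_1-s_2})$), telescopes via $(1-q^{n})/(q)_{n}=1/(q)_{n-1}$, and shifts $r_1\mapsto r_1+1$ (resp.\ $s_1\mapsto s_1+1$) to produce the factor $zq^{A+1}$ (resp.\ $q^{C+1}$) and the shifted parameters. Your exponent bookkeeping under the shifts agrees with the paper's computation, and your observation that the terminal data in $r_2,s_2$ ride along untouched is exactly the point the paper relies on.
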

\begin{proof}
For the first, observe that:
\begin{align*}
S_8&(A,B\mid C,D)-S_8(A+1,B-1\mid C,D)\\
&=\sum_{\substack{r_1\geq r_2\geq 0 \\ s_1\geq s_2\geq 0}}z^{r_1}
\dfrac{q^{r_1^2-r_1s_1+s_1^2+r_2^2+r_2s_2+s_2^2+Ar_1+Br_2+Cs_1+Ds_2}
(1-q^{r_1-r_2})}
{(q)_{r_1-r_2}(q)_{s_1-s_2}(q)_{r_2}(q)_{s_2}(q)_{r_2+s_2+1}}\\
&=\sum_{\substack{\mathbf{r_1>} r_2\geq 0 \\ s_1\geq s_2\geq 0}}z^{r_1}
\dfrac{q^{r_1^2-r_1s_1+s_1^2+r_2^2+r_2s_2+s_2^2+Ar_1+Br_2+Cs_1+Ds_2}}
{(q)_{\mathbf{r_1-r_2-1}}(q)_{s_1-s_2}(q)_{r_2}(q)_{s_2}(q)_{r_2+s_2+1}}\\
&=\sum_{\substack{\mathbf{r_1\geq} r_2\geq 0 \\ s_1\geq s_2\geq 0}}z^{r_1+1}
\dfrac{q^{r_1^2-r_1s_1+s_1^2+r_2^2+r_2s_2+s_2^2+\mathbf{(A+2)r_1}+Br_2+\mathbf{(C-1)s_1}+Ds_2+\mathbf{A+1}}}
{(q)_{r_1-r_2}(q)_{s_1-s_2}(q)_{r_2}(q)_{s_2}(q)_{r_2+s_2+1}}\\
&=zq^{A+1}S_8(A+2,B\mid C-1,D).
\end{align*}
The second relation is similar, and depends on the factor $(1-q^{s_1-s_2})$.
\end{proof}

Now we deduce two terminal relations. These depend on the 
quadratic $q^{r_2^2\boldsymbol{+}r_2s_2+s_2^2}$.
\begin{lem}
For $A,B,C,D\in\ZZ$ we have:
\begin{align}
S_8(A,B\mid C,0)-S_8(A,B\mid C,1)-qS_8(A,B+1\mid C,1)+qS_8(A,B+1\mid C+1,1)&=0
\tag{\mbox{$R_3$}}
\label{eqn:m8r3}\\
S_8(A, 0\mid B, C)-S_8(A, 1\mid B, C)-qS_8(A, 1\mid B, C+1)+qS_8(A+1, 1\mid B, C+1) &= 0
\tag{\mbox{$R_4$}}
\label{eqn:m8r4}
\end{align}
\end{lem}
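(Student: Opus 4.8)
The plan is to prove both terminal relations by reducing each to a single finite telescoping identity in the relevant terminal summation variable, in the same spirit as the proofs of \eqref{eqn:m8r1} and \eqref{eqn:m8r2}. First I would write all four $S_8$-sums in \eqref{eqn:m8r3} over the common index set $\{r_1\ge r_2\ge 0,\ s_1\ge s_2\ge 0\}$ and factor out the common summand
\[
z^{r_1}\,\frac{q^{\,r_1^2-r_1s_1+s_1^2+r_2^2+r_2s_2+s_2^2+\cdots}}{(q)_{r_1-r_2}(q)_{s_1-s_2}(q)_{r_2}(q)_{s_2}(q)_{r_2+s_2+1}}.
\]
Then \eqref{eqn:m8r3} becomes the statement that this summand, weighted by the bracket $1-q^{s_2}-q^{1+r_2+s_2}+q^{1+r_2+s_1+s_2}$, sums to zero; the same reduction applied to \eqref{eqn:m8r4} produces the bracket $1-q^{r_2}-q^{1+r_2+s_2}+q^{1+r_1+r_2+s_2}$.

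The key observation is that in \eqref{eqn:m8r3} the bracket together with every $s_2$-dependent factor involves only $r_2,s_1,s_2$. Hence, after fixing $r_1,r_2,s_1$ and pulling out $z^{r_1}$, the factor $1/(q)_{r_1-r_2}$, and all $q$-powers free of $s_2$, it suffices to prove the inner identity
\begin{align}
\sum_{j=0}^{N}\frac{q^{cj+j^2}\bigl(1-q^{j}-q^{1+c+j}+q^{1+c+N+j}\bigr)}{(q)_{N-j}(q)_{j}(q)_{c+j+1}}=0
\qquad(N,c\ge 0),
\label{eqn:innerplan}
\end{align}
specialized at $(N,c,j)=(s_1,r_2,s_2)$, where the inner range $0\le s_2\le s_1$ forced by $(q)_{s_1-s_2}$ matches the summation bound in \eqref{eqn:innerplan}.

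I would prove \eqref{eqn:innerplan} by exhibiting the antidifference
\begin{align}
b_j=\frac{q^{cj+j^2}}{(q)_{N-j}(q)_{j-1}(q)_{c+j}}.
\end{align}
Using $(q)_j=(q)_{j-1}(1-q^{j})$ and $(q)_{c+j+1}=(q)_{c+j}(1-q^{c+j+1})$, a direct expansion gives
\[
b_j-b_{j+1}=\frac{q^{cj+j^2}}{(q)_{N-j}(q)_{j}(q)_{c+j+1}}\Bigl[(1-q^{j})(1-q^{c+j+1})-q^{c+2j+1}(1-q^{N-j})\Bigr],
\]
and the bracket here expands to exactly $1-q^{j}-q^{1+c+j}+q^{1+c+N+j}$, so the summand equals $b_j-b_{j+1}$. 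The sum then collapses to $b_0-b_{N+1}$, and both endpoints vanish because each carries a factor $1/(q)_{-1}=0$ (from $(q)_{j-1}$ at $j=0$, and from $(q)_{N-j}$ at $j=N+1$). For \eqref{eqn:m8r4} the situation is identical after relabeling: fixing $r_1,s_1,s_2$ and summing over $r_2\in[0,r_1]$ reduces it to the \emph{same} identity \eqref{eqn:innerplan} with $(N,c,j)=(r_1,s_2,r_2)$, the ``$q^{r_1}$'' in \eqref{eqn:m8r4} being inert in the inner variable and playing the role of $q^{N}$. Crucially, the factor $z^{r_1}$, which breaks any literal $r\leftrightarrow s$ symmetry of $S_8$, is harmless here because it is constant in the inner sum.

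The only genuine idea is guessing the antidifference $b_j$ (readily suggested by the telescoping pattern in small cases $s_1=0,1,2$); everything after that is a mechanical comparison of two Laurent polynomials in $q$. I therefore expect the main obstacle to be purely bookkeeping: arranging the index ranges so that both telescoping endpoints land on $(q)_{-1}$, and confirming that no $s_2$- (respectively $r_2$-) dependent factor has been overlooked in the reduction to \eqref{eqn:innerplan}.
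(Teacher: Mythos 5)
Your proof is correct and takes essentially the same route as the paper: the factorization $\bigl(1-q^{j}\bigr)\bigl(1-q^{c+j+1}\bigr)-q^{c+2j+1}\bigl(1-q^{N-j}\bigr)$ underlying your antidifference $b_j$ is exactly the decomposition the paper uses, and your telescoping $\sum_{j=0}^{N}(b_j-b_{j+1})=b_0-b_{N+1}=0$ is the same computation as the paper's absorb-the-factors-into-Pochhammers, shift indices, and cancel argument (applied symmetrically in $s_2$ and $r_2$ for the two relations). As a minor aside, your bracket $1-q^{s_2}-q^{1+r_2+s_2}+q^{1+r_2+s_1+s_2}$ is the correct one; the paper's first displayed line prints the last exponent as $r_2+s_1+1$, a typo that its subsequent factorization silently corrects.
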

\begin{proof}
For the first, we have:
\begin{align*}
S_8&(A,B\mid C,0)-S_8(A,B\mid C,1)-qS_8(A,B+1\mid C,1)+qS_8(A,B+1\mid C+1,1)\\
&=\sum_{\substack{r_1\geq r_2\geq0\\s_1\geq s_2\geq 0}}
z^{r_1}\dfrac{q^{r_1^2-r_1s_1+s_1^2+r_2^2+r_2s_2+s_2^2+Ar_1+Br_2+Cs_1}
}{(q)_{r_1-r_2}(q)_{s_1-s_2}(q)_{r_2}(q)_{s_2}(q)_{r_2+s_2+1}}
\left( 1-q^{s_2}-q^{r_2+s_2+1}+q^{r_2+s_1+1}\right)\\
&=\sum_{\substack{r_1\geq r_2\geq0\\s_1\geq s_2\geq 0}}
z^{r_1}\dfrac{q^{r_1^2-r_1s_1+s_1^2+r_2^2+r_2s_2+s_2^2+Ar_1+Br_2+Cs_1}
}{(q)_{r_1-r_2}(q)_{s_1-s_2}(q)_{r_2}(q)_{s_2}(q)_{r_2+s_2+1}}
\left((1-q^{s_2}) (1-q^{r_2+s_2+1}) - q^{r_2+2s_2+1}(1-q^{s_1-s_2})\right)\\
&=
\sum_{\substack{r_1\geq r_2\geq0\\ s_1\geq \mathbf{s_2\geq 1}}}
z^{r_1}\dfrac{q^{r_1^2-r_1s_1+s_1^2+r_2^2+r_2s_2+s_2^2+Ar_1+Br_2+Cs_1}
}{(q)_{r_1-r_2}(q)_{s_1-s_2}(q)_{r_2}(q)_{\mathbf{s_2-1}}(q)_{\mathbf{r_2+s_2}}}\\
&\quad\quad\quad-
\sum_{\substack{r_1\geq r_2\geq0\\ \mathbf{s_1> s_2}\geq 0}}
z^{r_1}\dfrac{q^{r_1^2-r_1s_1+s_1^2+r_2^2+r_2s_2+s_2^2+Ar_1+\mathbf{(B+1)r_2}+Cs_1+\mathbf{2s_2+1}}
}{(q)_{r_1-r_2}(q)_{\mathbf{s_1-s_2-1}}(q)_{r_2}(q)_{s_2}(q)_{r_2+s_2+1}}\\
&=
\sum_{\substack{r_1\geq r_2\geq0\\ s_1\geq \mathbf{s_2\geq 0}}}
z^{r_1}\dfrac{q^{r_1^2-r_1s_1+s_1^2+r_2^2+r_2s_2+s_2^2+\mathbf{(A-1)r_1}+\mathbf{(B+1)r_2}+\mathbf{(C+2)s_1}+\mathbf{2s_2}+\mathbf{C+2}}
}{(q)_{r_1-r_2}(q)_{s_1-s_2}(q)_{r_2}(q)_{\mathbf{s_2}}(q)_{\mathbf{r_2+s_2+1}}}\\
&\quad\quad\quad-
\sum_{\substack{r_1\geq r_2\geq0\\ \mathbf{s_1\geq s_2}\geq 0}}
z^{r_1}\dfrac{q^{r_1^2-r_1s_1+s_1^2+r_2^2+r_2s_2+s_2^2+\mathbf{(A-1)r_1}
+(B+1)r_2+\mathbf{(C+2)s_1}+2s_2+\mathbf{(C+2)}}
}{(q)_{r_1-r_2}(q)_{\mathbf{s_1-s_2}}(q)_{r_2}(q)_{s_2}(q)_{r_2+s_2+1}}\\
&=0.
\end{align*}

The second equation is proved similarly, and depends on the following
polynomial identity:
\begin{align*}
1-q^{r_2}-q^{r_2+s_2+1}+q^{r_1+r_2+s_2+1}
=(1-q^{r_2})(1-q^{r_2+s_2+1})-q^{2r_2+s_2+1}(1-q^{r_1-r_2}).
\end{align*}
\end{proof}

Now we prove the remaining recurrences.

Recurrence for $H_{(4,1,0)}$ amounts to proving:
\begin{align}
S_8(1, 0, 1, 1)-S_8(0, 1, 1, 1)+zqS_8(2, 1, 0, 1)=0,
\end{align}
which is nothing but $\ref{eqn:m8r1}(1,0,1,1)$.

Recurrence for $H_{(3,0,2)}$ amounts to proving:
\begin{align}
S_8(1, 0, 0, 1)-qS_8(1, 1, 1, 1)-S_8(1, 1, 0, 0)+qS_8(2, 1, 0, 1)=0
\end{align}
which is $\ref{eqn:m8r4}(1,0,0)-\ref{eqn:m8r3}(1,0,0)$.

Recurrence for $H_{(3,2,0)}$ is:
\begin{align}
-&S_8(0, 0, 1, 1)+S_8(1, 1, 0, 1)+(qz-1)S_8(2, 0, 0, 1)+S_8(2, 1, 0, 0)\notag\\
&\quad\quad-zq^2S_8(2, 1, 1, 1)+q(qz-1)S_8(3, 1, 0, 1)=0
\end{align}
which is:
\begin{align}
&- (1-zq)(\ref{eqn:m8r4}(2,0,0)-\ref{eqn:m8r3}(2,0,0))-\ref{eqn:m8r4}(1,1,0)
\notag\\
&\quad-\ref{eqn:m8r1}(0,1,1,0)-\ref{eqn:m8r4}(0,1,0)+\ref{eqn:m8r3}(0,0,1)+\ref{eqn:m8r2}(1,1,0,1)
\end{align}

Recurrence for $H_{(3,1,1)}$ is:
\begin{align}
&-S_8(0, 1, 0, 1)+S_8(1, 0, 0, 1)+S_8(1, 1, 1, 1)+(qz-1)S_8(2, 0, 0, 1)+(qz-1)S_8(2, 0, 1, 1)\notag\\
&\quad\quad+S_8(2, 1, 0, 0)-q(qz-1)S_8(2, 1, 1, 1)+q(qz-1)(qz+1)S_8(3, 1, 0, 1)\notag\\
&=0
\end{align}
which is:
\begin{align}
-\ref{eqn:m8r1}(0,1,0,1)+(1-zq)(\ref{eqn:m8r1}(1,1,1,1)-\ref{eqn:m8r4}(2,0,0)+\ref{eqn:m8r3}(2,0,0))-zq\ref{eqn:m8r2}(2,1,-1,1).
\end{align}

Recurrence for $H_{(2,2,1)}$ is:
\begin{align}
&-S_8(0, 0, 0, 1)+qS_8(0, 1, 1, 1)+S_8(1, 0, 0, 1)+S_8(1, 0, 1, 1)+S_8(1, 1, 0, 1)-qS_8(1, 1, 1, 1)\notag\\
&\quad\quad+(qz-1)S_8(2, 0, 0, 1)+(qz-1)S_8(2, 1, 0, 1)-q^2S_8(2, 1, 1, 1)z+(qz-1)(q^2z-1)S_8(3, 0, 0, 1)
\notag\\
&\quad\quad+(qz-1)S_8(3, 1, 0, 0)-zq^3(qz-1)S_8(3, 1, 1, 1)+q(qz-1)(q^2z-1)S_8(4, 1, 0, 1)
\notag\\
&=0
\end{align}
which can be obtained as:
\begin{align}
&q\ref{eqn:m8r3}(-1, 0, 2)
-q\ref{eqn:m8r3}(-1, 1, 2)
-q\ref{eqn:m8r4}(-1, 2, 0)
+q\ref{eqn:m8r4}(-1, 2, 1)
-zq\ref{eqn:m8r3}(2, 0, 0)
+zq^2\ref{eqn:m8r3}(3, 1, 0)
\notag\\
&\,\,
+zq\ref{eqn:m8r4}(2, 0, 0)
-q\ref{eqn:m8r1}(-1, 1, 2, 1)
-q^2\ref{eqn:m8r1}(-1, 2, 2, 1)
+q\ref{eqn:m8r1}(0, 1, 2, 1)
-q\ref{eqn:m8r1}(1, 2, 1, 1)
\notag\\
&\,\,
+q\ref{eqn:m8r1}(1, 2, 2, 1)
-q\ref{eqn:m8r2}(0, 1, 0, 2)
+q\ref{eqn:m8r2}(1, 1, 0, 2)
+zq\ref{eqn:m8r2}(2, 2, 0, 1)
-\ref{eqn:m8r1}(0, 1, 1, 0)
\notag\\
&\,\,
-\ref{eqn:m8r1}(0, 1, 1, 1)
+\ref{eqn:m8r1}(0, 1, 2, 0)
+\ref{eqn:m8r1}(0, 2, 1, 1)
-\ref{eqn:m8r1}(0, 2, 2, 0)
-\ref{eqn:m8r2}(0, 1, 0, 1)
\notag\\
&\,\,
+\ref{eqn:m8r2}(0, 1, 1, 1)
-\ref{eqn:m8r2}(0, 2, 1, 1)
+\ref{eqn:m8r2}(1, 0, 0, 1)
+\ref{eqn:m8r2}(1, 1, 0, 1)
+(q^2z-q)\ref{eqn:m8r4}(1, 2, 1)
\notag\\
&\,\,
-zq\ref{eqn:m8r2}(2, 1, 0, 1)
+(qz-2)\ref{eqn:m8r2}(2, 0, 0, 1)
+(qz-1)\ref{eqn:m8r3}(1, 1, 1)
+(-q^3z^2+q^2z+qz-1)\ref{eqn:m8r3}(3, 0, 0)
\notag\\
&\,\,
+(-qz+1)\ref{eqn:m8r4}(2, 0, 1)
+(q^3z^2-q^2z-qz+1)\ref{eqn:m8r4}(3, 0, 0)
+(-qz+2)\ref{eqn:m8r1}(1, 1, 1, 0)
\notag\\
&\,\,
+(q^2z-q)\ref{eqn:m8r2}(3, 1, 0, 2)
+(-q^2z+q)\ref{eqn:m8r2}(2, 1, 0, 2)
-\ref{eqn:m8r4}(0, 0, 1)
+\ref{eqn:m8r4}(1, 2, 0)
\end{align}
While not impossible, it is impractical to check by hand that this recurrence can be indeed obtained using the linear combination
given above. In Appendix \ref{app:verify}, we explain how to verify such equalities using a computer.

Using Theorem \ref{thm:initcond} for the initial conditions, we have established Conjecture \ref{conj:main}
for compositions of $5$.

\subsection{Mod 10}

The arrangement is as follows:
\begin{align}
\begin{array}{lll}
7,0,0 		& 		       &\\
6,1,0/6,0,1 & 5,1,1        & \\
5,2,0/5,0,2 & 4,2,1/4,1,2  & 3,2,2\\
\hline
4,3,0/4,0,3 & 3,3,1 &
\end{array}
\end{align}

Now, all the identities above the line are given by our truncation procedure:
\begin{align}
H_{(7,0,0)}&=S_{10}( 1,1\mid 1,1)\\
H_{(6,1,0)}&=S_{10}( 0,1\mid 1,1)\\
H_{(5,2,0)}&=S_{10}( 0,0\mid 1,1)\\
H_{(5,1,1)}&=S_{10}( 0,1\mid 0,1)-qS_{10}(1,1\mid 1,1)\\
H_{(4,2,1)}&=S_{10}( 0,0\mid 0,1)-qS_{10}(0,1\mid 1,1)\\
H_{(3,2,2)}&=S_{10}( 0,0\mid 0,0)-qS_{10}(0,1\mid 0,1)\\
H_{(6,0,1)}&=S_{10}( 1,1\mid 0,1)-q(1-z)S_{10}(2,1\mid 1,1)\\
H_{(5,0,2)}&=S_{10}( 1,1\mid 0,0)-q(1-z)S_{10}(2,1\mid 0,1)\\
H_{(4,1,2)}&=S_{10}( 0,1\mid 0,0)-qS_{10}(1,1\mid 0,1)
\end{align}

In order to find the missing identities 
for $H_{(4,3,0)}$, $H_{(4,0,3)}$ and $H_{(3,3,1)}$ we use 
Corteel--Welsh recursions to express them in terms of
the identities above the line.

Solving the recurrence for $H_{(5,2,0)}$ gets us:
\begin{align}
H_{(4,3,0)} = S_{10}(-1, 0\mid 1, 1)
            -S_{10}(0, 1\mid 0, 1)
       +(1-z)S_{10}(1, 0\mid 0, 1)
           +zqS_{10}(1, 1\mid 1, 1).
\end{align}
Solving the recurrence for $H_{(4,2,1)}$ gives:
\begin{align}
H_{(3,3,1)} &=    S_{10}(-1, 0\mid 0, 1)
                    -S_{10}(0, 1\mid 0, 0)
                    +(1-z)S_{10}(1, 0\mid 0, 0)               
                    +zqS_{10}(1, 1\mid 0, 1)\notag\\
           &\quad -qS_{10}(-1, 1\mid 1, 1)
                     -zS_{10}(0, 0\mid 1, 1),
\end{align}
Finally, to get $H_{(4,0,3)}$ one may solve the recurrence for
$H_{(4,0,3)}$ itself (which is what we do), 
or for $H_{(4,3,0)}$, or as indicated in Section \ref{sec:conj}
one may solve the recurrecence for $H_{(4,1,2)}$.
Solving the recurrence for $H_{(4,0,3)}$ leads to the 
shortest expression for $H_{(4,0,3)}$, so we use this, deviating
from the scheme sketched in Section \ref{sec:conj}.
We obtain --
\begin{align}
H_{(4,0,3)} &=   S_{10}(0, 0\mid 0, 1)
                 -qS_{10}(0, 1\mid 1, 1)
                    -zqS_{10}(1, 0\mid 1, 1)
                -S_{10}(1, 1\mid 0, 0) \notag \\
          &\quad  +(1-qz)S_{10}(2, 0\mid 0, 0)
                     +zq^2S_{10}(2, 1\mid 0, 1)
                     +zq S_{10}(2, 1\mid 0, 0)
\end{align}

With this, several recurrences are satisfied immediately, namely, the ones for $H_{(7,0,0)}$, $H_{(6,0,1)}$, $H_{(5,0,2)}$.
Also, the recurrences for $H_{(5,2,0)}$, $H_{(4,2,1)}$ and $H_{(4,0,3)}$ 
are satisfied by construction.
We still need to prove that the recurrences for $H_{(6,1,0)}$, $H_{(5,1,1)}$,
$H_{(4,1,2)}$, $H_{(3,2,2)}$,  $H_{(4,3,0)}$ and $H_{(3,3,1)}$.

Now we provide the relations that govern the sum-sides.
Exactly in analogy with the Mod-8 case, we have the terminal 
and non-terminal relations.
Relations corresponding to the non-terminal variables
$r_1$ and $s_1$ are given as follows.
\begin{lem}
For $A,B,C,D\in\ZZ$, we have:
\begin{align}
S_{10}(A,B\mid C,D)-S_{10}(A+1,B-1\mid C,D)-zq^{A+1}S_{10}(A+2,B\mid C-1,D)&=0,
\tag{\mbox{$R_1$}}
\label{eqn:m10r1}\\
S_{10}(A,B\mid C,D)-S_{10}(A,B\mid C+1,D-1)-q^{C+1}S_{10}(A-1,B\mid C+2,D)&=0
\tag{\mbox{$R_2$}}
\label{eqn:m10r2}.
\end{align}
\end{lem}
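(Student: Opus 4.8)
The plan is to prove \eqref{eqn:m10r1} and \eqref{eqn:m10r2} by the same index-shifting argument that establishes the Mod-8 relations \eqref{eqn:m8r1} and \eqref{eqn:m8r2}. Write $Q=r_1^2-r_1s_1+s_1^2+r_2^2-r_2s_2+s_2^2$ for the positive-definite quadratic part of the exponent; this positivity guarantees that each $S_{10}(\rho\mid\sigma)$ is a well-defined formal series even when $\rho,\sigma$ have negative entries, so no special handling of negative $A,B,C,D$ is required. The key observation is that the terminal factor $1/\big((q)_{r_2}(q)_{s_2}(q)_{r_2+s_2+1}\big)$ depends only on the innermost indices $r_2,s_2$; consequently, shifting the outer index $r_1$ (for $R_1$) or $s_1$ (for $R_2$) leaves this factor untouched, and the computation reduces to exactly the bookkeeping carried out in the $S_8$ case. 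No reduction to a base value of the parameters is needed: all four integers $A,B,C,D$ are handled at once.

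For \eqref{eqn:m10r1}, first I would form the difference $S_{10}(A,B\mid C,D)-S_{10}(A+1,B-1\mid C,D)$. The two summands share the monomial $z^{r_1}q^{Q+Ar_1+Br_2+Cs_1+Ds_2}$ and differ only through the factor $1-q^{r_1-r_2}$, which annihilates the diagonal $r_1=r_2$ and collapses $(q)_{r_1-r_2}$ to $(q)_{r_1-r_2-1}$, so the surviving sum runs over $r_1>r_2\geq 0$. Re-indexing $r_1\mapsto r_1+1$ then restores the range $r_1\geq r_2$; expanding $(r_1+1)^2-(r_1+1)s_1+A(r_1+1)$ converts the linear data to $(A+2)r_1+Br_2+(C-1)s_1+Ds_2$, raises the power of $z$ by one, and pulls out the constant $q^{A+1}$. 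The result is exactly $zq^{A+1}S_{10}(A+2,B\mid C-1,D)$, which is \eqref{eqn:m10r1}.

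For \eqref{eqn:m10r2} the argument is the mirror image: the difference $S_{10}(A,B\mid C,D)-S_{10}(A,B\mid C+1,D-1)$ produces the factor $1-q^{s_1-s_2}$, which collapses $(q)_{s_1-s_2}$ to $(q)_{s_1-s_2-1}$ and restricts the sum to $s_1>s_2\geq 0$. Re-indexing $s_1\mapsto s_1+1$ and expanding $(s_1+1)^2-r_1(s_1+1)+C(s_1+1)$ shifts the linear data to $(A-1)r_1+Br_2+(C+2)s_1+Ds_2$ and extracts the constant $q^{C+1}$; no extra power of $z$ appears because $z^{r_1}$ is independent of $s_1$. This yields $q^{C+1}S_{10}(A-1,B\mid C+2,D)$, establishing \eqref{eqn:m10r2}. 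I do not anticipate a genuine obstacle here: the content is entirely the bookkeeping of the quadratic exponent under a unit shift of $r_1$ (resp.\ $s_1$), and the only point requiring care is to confirm that the cross-term $-r_1s_1$ supplies the $-s_1$ (resp.\ $-r_1$) correction that lowers $C$ (resp.\ $A$) by one, exactly as in the Mod-8 computation.
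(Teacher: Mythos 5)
Your proposal is correct and is essentially the paper's own argument: the paper proves the analogous Mod-$8$ relations by exactly this extraction of the factor $1-q^{r_1-r_2}$ (resp.\ $1-q^{s_1-s_2}$), collapse of $(q)_{r_1-r_2}$ to $(q)_{r_1-r_2-1}$, and unit reindexing of $r_1$ (resp.\ $s_1$), and then states the Mod-$10$ lemma as the identical computation. Your exponent bookkeeping (in particular the $-s_1$, resp.\ $-r_1$, contribution from the cross-term $-r_1s_1$ and the constants $q^{A+1}$, $q^{C+1}$) checks out against the definition of $S_{10}$.
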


The terminal relations are:
\begin{lem}
For $A,B,C,D\in\ZZ$, we have:
\begin{align}
&S_{10}(A,B\mid C,D)-S_{10}(A,B\mid C,D+1)-qS_{10}(A,B+1\mid C,D+1)\notag\\
&\qquad\qquad+qS_{10}(A,B+1\mid C,D+2)-q^{C+D+2}S_{10}(A-1,B-1\mid C+2,D+2)=0,
\tag{\mbox{$R_3$}}
\label{eqn:m10r3}\\
&S_{10}(A,B\mid C,D)-S_{10}(A,B+1\mid C,D)-qS_{10}(A,B+1\mid C,D+1)\notag\\
&\qquad\qquad+qS_{10}(A,B+2\mid C,D+1)-zq^{A+B+2}S_{10}(A+2,B+2\mid C-1,D-1)=0
\tag{\mbox{$R_4$}}
\label{eqn:m10r4}.
\end{align}
\end{lem}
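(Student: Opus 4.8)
The plan is to prove both terminal relations by the same two-move strategy that established the mod-$8$ terminal relations \eqref{eqn:m8r3} and \eqref{eqn:m8r4}: first collapse the four ``small-shift'' summands into a single sum by factoring the polynomial in $q$ that multiplies their common summand, and then identify the result with the fifth term by relabelling (shifting) the summation variables. Throughout I would work with the explicit $k=3$ incarnation of $S_{10}=S_{3k+1}$, whose summand, over $r_1\geq r_2\geq 0$ and $s_1\geq s_2\geq 0$, carries the quadratic $q^{r_1^2-r_1s_1+s_1^2+r_2^2-r_2s_2+s_2^2}$ together with the denominator $(q)_{r_1-r_2}(q)_{s_1-s_2}(q)_{r_2}(q)_{s_2}(q)_{r_2+s_2+1}$. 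The one structural novelty compared with mod $8$ is that $S_{3k+1}$ lacks the factor $q^{2r_{k-1}s_{k-1}}$ present in $S_{3k-1}$, so the terminal cross term is $-r_2s_2$ rather than $+r_2s_2$; this sign is exactly what forces the appearance of the extra fifth term in \eqref{eqn:m10r3}--\eqref{eqn:m10r4}.

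For \eqref{eqn:m10r3}, pulling the common summand out of the first four terms leaves the polynomial $1-q^{s_2}-q^{r_2+s_2+1}+q^{r_2+2s_2+1}$, which here factors cleanly as $(1-q^{s_2})(1-q^{r_2+s_2+1})$ (the mod-$8$ analogue instead split as $(1-q^{s_2})(1-q^{r_2+s_2+1})-q^{r_2+2s_2+1}(1-q^{s_1-s_2})$ and telescoped to zero on its own). Using $(1-q^{s_2})/(q)_{s_2}=1/(q)_{s_2-1}$, which forces $s_2\geq 1$, together with $(1-q^{r_2+s_2+1})/(q)_{r_2+s_2+1}=1/(q)_{r_2+s_2}$, and then shifting $s_1\mapsto s_1+1$ and $s_2\mapsto s_2+1$, the cross terms $-r_1$ and $-r_2$ released by the quadratic drop the $r_1$- and $r_2$-exponents by one, the diagonal terms raise the $s_1$- and $s_2$-coefficients by two, and an overall factor $q^{C+D+2}$ materialises. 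The resulting summand is precisely that of $q^{C+D+2}S_{10}(A-1,B-1\mid C+2,D+2)$, so the five terms cancel.

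Relation \eqref{eqn:m10r4} is the mirror image under interchanging the roles of $r$ and $s$. Now the first four terms leave $1-q^{r_2}-q^{r_2+s_2+1}+q^{2r_2+s_2+1}=(1-q^{r_2})(1-q^{r_2+s_2+1})$; after applying $(1-q^{r_2})/(q)_{r_2}=1/(q)_{r_2-1}$ and $(1-q^{r_2+s_2+1})/(q)_{r_2+s_2+1}=1/(q)_{r_2+s_2}$ I would shift $r_2\mapsto r_2+1$ and $r_1\mapsto r_1+1$. The key point is that $r_1\mapsto r_1+1$ turns $z^{r_1}$ into $z^{r_1+1}$, which is the source of the factor $z$ in the fifth term; the diagonal terms raise the $r_1,r_2$-coefficients to $A+2,B+2$ while the cross terms lower the $s_1,s_2$-coefficients to $C-1,D-1$, reproducing exactly $zq^{A+B+2}S_{10}(A+2,B+2\mid C-1,D-1)$.

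Because the factored polynomials and the shifts do not involve $A,B,C,D$ (these parameters only translate the linear exponents and are transported unchanged through the shifts), the argument is uniform in all four integer parameters, and, as for \eqref{eqn:m8r3}--\eqref{eqn:m8r4}, no separate constant-term or base-case verification is needed; the identities hold directly as equalities of formal power series in $z$ and $q$. The only genuine work is the exponent bookkeeping: I must check that every quadratic diagonal and cross term and every linear exponent lands in the right slot under the simultaneous shifts, and that the summation ranges transform correctly (in particular the passage from $s_2\geq 1$ back to $s_2\geq 0$ in \eqref{eqn:m10r3}, and the analogous $r_2$ range in \eqref{eqn:m10r4}). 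This is routine but error-prone, and it is the step I expect to demand the most care.
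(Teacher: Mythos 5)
Your proposal is correct, and it carries out exactly the argument the paper intends: the paper states this lemma without proof, deferring to ``analogy with the Mod-8 case,'' and your factor-and-reindex computation (with the clean factorization $(1-q^{s_2})(1-q^{r_2+s_2+1})$, resp.\ $(1-q^{r_2})(1-q^{r_2+s_2+1})$, followed by the shift $s_1,s_2\mapsto s_1+1,s_2+1$, resp.\ $r_1,r_2\mapsto r_1+1,r_2+1$) is that analogy made precise. The exponent bookkeeping you flag all checks out, including the constants $C+D+2$ and $A+B+2$ and the source of the factor $z$ in \eqref{eqn:m10r4}.
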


Now, the recurrence for $H_{(6,1,0)}$ 
amounts to:
\begin{align*}
zqS_{10}(2,1\mid 0,1) +S_{10}(1,0\mid 1,1)  -S_{10}(0,1\mid 1,1)&=0
\end{align*}
which is nothing but
$-\ref{eqn:m10r1}(0,1,1,1)$.

Recurrence for $H_{(5,1,1)}$ is equivalent to showing:
\begin{align*}
&q^3z^2S_{10}(3, 1\mid 0, 1) - q^2zS_{10}(3, 1\mid 0, 1) + qzS_{10}(2, 1\mid 0, 0) \\
&+ qzS_{10}(2, 0\mid 1, 1) - S_{10}(2, 0\mid 1, 1) + S_{10}(1, 1\mid 1, 1) + S_{10}(1, 0\mid 0, 1) - S_{10}(0, 1\mid 0, 1)
=0
\end{align*}
which can be obtained as:
\begin{align}
q\ref{eqn:m10r1}(-1,1,2,1)+\ref{eqn:m10r1}(0,1,1,0)-(1-qz)\ref{eqn:m10r1}(1,1,1,1)+\ref{eqn:m10r2}(0,1,0,1)-\ref{eqn:m10r2}(1,0,0,1)
\end{align}

Our feeling is that all relations sufficiently above the horizontal line
ought to be provable using only non-terminal relations, and the calculations above provide some evidence.

Recurrence for $H_{(4,1,2)}$ is equivalent to:
\begin{align*}
&q^4z^2S_{10}(3, 1\mid 0, 1) + q^3z^2S_{10}(3, 1\mid 0, 0) - q^3z^2S_{10}(3, 0\mid 0, 0) - q^3z^2S_{10}(2, 0\mid 1, 1) \\
&- q^2zS_{10}(2, 1\mid 1, 1) - q^2zS_{10}(1, 1\mid 1, 1) + qzS_{10}(3, 0\mid 0, 0) - qzS_{10}(2, 1\mid 0, 0)\\
& + qzS_{10}(2, 0\mid 0, 1) + qzS_{10}(1, 0\mid 0, 1) - S_{10}(2, 0\mid 0, 1) + S_{10}(1, 1\mid 0, 1)\\
& + S_{10}(1, 0\mid 0, 0) - S_{10}(0, 1\mid 0, 0)=0,
\end{align*}
 which is:
\begin{align}
&-zq\ref{eqn:m10r1}(0, 2, 0, 1)
+zq^2\ref{eqn:m10r1}(0, 2, 1, 1)
+zq^3\ref{eqn:m10r1}(-1, 2, 2, 2)
-zq^2\ref{eqn:m10r1}(0, 2, 0, 2)
\notag\\
&
-zq^2\ref{eqn:m10r1}(2, 2, 0, 0)
+zq^2\ref{eqn:m10r1}(-1, 2, 2, 1)
-zq\ref{eqn:m10r1}(2, 1, 0, 0)
+zq\ref{eqn:m10r1}(0, 2, 1, 0)
\notag\\
&
+q^4z^2\ref{eqn:m10r1}(3, 2, -1, 0)
+zq^3\ref{eqn:m10r1}(0, 1, 2, 2)
+zq^2\ref{eqn:m10r1}(0, 1, 2, 1)
\notag\\
&
-zq^2\ref{eqn:m10r1}(1, 2, 0, 2)
-\ref{eqn:m10r1}(0, 1, 0, 0)
+\ref{eqn:m10r1}(1, 1, 0, 1)
\notag\\
&
+(-q^2z^2-q^2z)\ref{eqn:m10r2}(2, 2, -1, 1)
+(-q^3z^2+q^2z)\ref{eqn:m10r2}(2, 2, -1, 2)
+(-q^3z^2+q^2z)\ref{eqn:m10r2}(3, 1, -1, 1)
\notag\\
&
-zq^2\ref{eqn:m10r2}(1, 0, 1, 2)
-zq\ref{eqn:m10r2}(2, 1, -1, 1)
-q^4z^2\ref{eqn:m10r2}(3, 1, -1, 2)
\notag\\
&
+zq^2\ref{eqn:m10r2}(0, 2, 0, 2)
+zq\ref{eqn:m10r2}(0, 2, 0, 1)
+q^3z^2\ref{eqn:m10r2}(3, 0, -1, 1)
\notag\\
&
-zq\ref{eqn:m10r3}(2, 1, 0, 0)
+zq\ref{eqn:m10r3}(1, 1, 1, 0)
-zq\ref{eqn:m10r3}(2, 1, -1, 0)
\notag\\
&
+zq\ref{eqn:m10r4}(1, 0, 0, 1)
-q^3z^2\ref{eqn:m10r4}(3, 0, -1, 1)
+zq\ref{eqn:m10r4}(2, 0, 0, 1)
\label{eqn:rec412lincomb}
\end{align}

The proof that recurrences for $H_{(3,2,2)}$, $H_{(4,3,0)}$ and $H_{(3,3,1)}$
are consequences of relations \eqref{eqn:m10r1}--\eqref{eqn:m10r4} is much 
longer than \eqref{eqn:rec412lincomb}. 
Each of these uses a linear combination of  hundreds of terms involving \eqref{eqn:m10r1}--\eqref{eqn:m10r4}.
We thus defer the check to a computer, as explained in Appendix \ref{app:verify}.

Combined with Theorem \ref{thm:initcond} and Corollary \ref{cor:initcond} we have established Conjecture \ref{conj:main}
for compositions of $7$.

Setting $z\mapsto 1$ and using \eqref{eqn:Hcprod}, we arrive at the following sum-product identities,
some of which are new. 

\begin{cor}
We have the following identities, with each $\sum$ denoting a sum over $r_1,r_2,s_1,s_2\geq 0$:
\begin{align}
\sum
&\dfrac{q^{r_1^2-r_1s_1+s_1^2+r_2^2-r_2s_2+s_2^2\,+\,r_1+r_2+s_1+s_2}}{(q)_{r_1-r_2}(q)_{s_1-s_2}(q)_{r_2}(q)_{s_2}(q)_{r_2+s_2+1}}
=\dfrac{1}{(q)_\infty}\dfrac{1}{\theta(q^2,q^3,q^3,q^4,q^4,q^5;\,q^{10})}
=\dfrac{\chi(\Omega(7\Lambda_0))}{(q)_\infty}
\label{eqn:700}
\\
\sum
&\dfrac{q^{r_1^2-r_1s_1+s_1^2+r_2^2-r_2s_2+s_2^2\,+\,r_2+s_1+s_2}}{(q)_{r_1-r_2}(q)_{s_1-s_2}(q)_{r_2}(q)_{s_2}(q)_{r_2+s_2+1}}
=\dfrac{1}{(q)_\infty}\dfrac{1}{\theta(q,q^2,q^3,q^4,q^4,q^5;\,q^{10})}
=\dfrac{\chi(\Omega(6\Lambda_0+\Lambda_1))}{(q)_\infty}
\label{eqn:610}
\\
\sum
&\dfrac{q^{r_1^2-r_1s_1+s_1^2+r_2^2-r_2s_2+s_2^2\,+\,s_1+s_2}}{(q)_{r_1-r_2}(q)_{s_1-s_2}(q)_{r_2}(q)_{s_2}(q)_{r_2+s_2+1}}
=\dfrac{1}{(q)_\infty}\dfrac{1}{\theta(q,q^2,q^2,q^3,q^4,q^5;\,q^{10})}
=\dfrac{\chi(\Omega(5\Lambda_0+2\Lambda_1))}{(q)_\infty}
\label{eqn:520}
\\
\sum
&\dfrac{q^{r_1^2-r_1s_1+s_1^2+r_2^2-r_2s_2+s_2^2\,+\,r_2+s_2}(1-q^{1+r_1+s_1})}{(q)_{r_1-r_2}(q)_{s_1-s_2}(q)_{r_2}(q)_{s_2}(q)_{r_2+s_2+1}}
=
\dfrac{1}{(q)_\infty}\dfrac{1}{\theta(q,q,q^3,q^3,q^4,q^5;\,q^{10})}
=\dfrac{\chi(\Omega(5\Lambda_0+\Lambda_1+\Lambda_2))}{(q)_\infty}
\label{eqn:511}
\\
\sum
&\dfrac{q^{r_1^2-r_1s_1+s_1^2+r_2^2-r_2s_2+s_2^2\,+\,s_2}(1-q^{1+r_2+s_1})}{(q)_{r_1-r_2}(q)_{s_1-s_2}(q)_{r_2}(q)_{s_2}(q)_{r_2+s_2+1}}
=
\dfrac{1}{(q)_\infty}\dfrac{1}{\theta(q,q,q^2,q^3,q^4,q^4;\,q^{10})}
=\dfrac{\chi(\Omega(4\Lambda_0+2\Lambda_1+\Lambda_2))}{(q)_\infty}
\label{eqn:421}
\\
\sum
&\dfrac{q^{r_1^2-r_1s_1+s_1^2+r_2^2-r_2s_2+s_2^2}(1-q^{1+r_2+s_2})}{(q)_{r_1-r_2}(q)_{s_1-s_2}(q)_{r_2}(q)_{s_2}(q)_{r_2+s_2+1}}
=
\dfrac{1}{(q)_\infty}\dfrac{1}{\theta(q,q,q^2,q^2,q^4,q^5;\,q^{10})}
=\dfrac{\chi(\Omega(3\Lambda_0+2\Lambda_1+2\Lambda_2))}{(q)_\infty}
\label{eqn:322}
\\
\sum
&\dfrac{q^{r_1^2-r_1s_1+s_1^2+r_2^2-r_2s_2+s_2^2}(q^{-r_1+s_1+s_2}-q^{r_2+s_2}+q^{1+r_1+r_2+s_1+s_2})}
{(q)_{r_1-r_2}(q)_{s_1-s_2}(q)_{r_2}(q)_{s_2}(q)_{r_2+s_2+1}}\notag\\
&=
\sum
\dfrac{q^{r_1^2-r_1s_1+s_1^2+r_2^2-r_2s_2+s_2^2}
}{(q)_{r_1-r_2}(q)_{s_1-s_2}(q)_{r_2}(q)_{s_2}(q)_{r_2+s_2+1}}\notag\\
&\qquad\times
(
q^{s_2}-q^{1+r_2+s_1+s_2}-q^{1+r_1+s_1+s_2}-q^{r_1+r_2}
+(1-q)q^{2r_1}+q^{2+2r_1+r_2+s_2}+q^{1+2r_1+r_2}
)\notag\\
&=
\dfrac{1}{(q)_\infty}\dfrac{1}{\theta(q,q^2,q^2,q^3,q^3,q^4;\,q^{10})}
=\dfrac{\chi(\Omega(4\Lambda_0+3\Lambda_1))}{(q)_\infty}
\label{eqn:430}
\\
\sum
&\dfrac{q^{r_1^2-r_1s_1+s_1^2+r_2^2-r_2s_2+s_2^2}
(
q^{-r_1+s_2}-q^{r_2}+q^{1+r_1+r_2+s_2}-q^{1-r_1+r_2+s_1+s_2}-q^{s_1+s_2}
)
}{(q)_{r_1-r_2}(q)_{s_1-s_2}(q)_{r_2}(q)_{s_2}(q)_{r_2+s_2+1}}
\notag\\
&=
\dfrac{1}{(q)_\infty}\dfrac{1}{\theta(q,q,q^2,q^3,q^3,q^5;\,q^{10})}
=\dfrac{\chi(\Omega(3\Lambda_0+3\Lambda_1+\Lambda_2))}{(q)_\infty}
\label{eqn:331}
\end{align}
\end{cor}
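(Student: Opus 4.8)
The plan is to obtain every identity in the corollary as the specialization $z=1$ of the corresponding closed form for $H_c(z,q)$ already established in this subsection, followed by collecting the finitely many $S_{10}$-summands into a single sum and reading off the product side from \eqref{eqn:Hcprod}. Concretely, for each composition $c$ of $7$ I would take the expression for $H_c(z,q)$ — the seed formulas of Conjecture \ref{conj:main} for the rows above the line, and the solved-recurrence formulas for $H_{(4,3,0)}$, $H_{(4,0,3)}$, $H_{(3,3,1)}$ below the line — set $z=1$, and then merge the resulting terms.

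The merging step is purely mechanical. At $z=1$ the prefactor $z^{r_1}$ in each $S_{3k+1}(\rho\mid\sigma)$ (here $k=3$, so $m=10$) disappears, and every summand shares the same quadratic form $q^{r_1^2-r_1s_1+s_1^2+r_2^2-r_2s_2+s_2^2}$ and the same denominator $(q)_{r_1-r_2}(q)_{s_1-s_2}(q)_{r_2}(q)_{s_2}(q)_{r_2+s_2+1}$. Thus each $S_{10}(\rho\mid\sigma)(1,q)$ contributes a monomial $q^{\rho\cdot r+\sigma\cdot s}$ (times any outer power of $q$), and the whole linear combination collapses to a single sum whose numerator is the corresponding sum of monomials. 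For example, $H_{(5,1,1)}=S_{10}(0,1\mid 0,1)-qS_{10}(1,1\mid 1,1)$ yields at $z=1$ exactly the factor $(1-q^{1+r_1+s_1})$ of \eqref{eqn:511}, while $H_{(3,3,1)}$ produces the five-monomial numerator $q^{-r_1+s_2}-q^{r_2}+q^{1+r_1+r_2+s_2}-q^{1-r_1+r_2+s_1+s_2}-q^{s_1+s_2}$ of \eqref{eqn:331}. One useful simplification to record is that any summand carrying a coefficient $(1-z)$ (as in the formulas for $H_{(4,3,0)}$ and $H_{(3,3,1)}$) drops out at $z=1$, which is precisely what shortens these numerators.

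Two points deserve slightly more care. First, the product expressions on the right are immediate from \eqref{eqn:Hcprod}: since each $H_c(1,q)$ equals $(q)_\infty^{-1}\chi(\Omega(\lambda))$ with $\lambda=c_0\Lambda_0+c_1\Lambda_1+c_2\Lambda_2$, I would just substitute the modulus-$10$ theta evaluation of that principal character for each listed $c$. Second, the double-sum equality displayed in \eqref{eqn:430} is the assertion $H_{(4,3,0)}(1,q)=H_{(4,0,3)}(1,q)$: its first sum is the $z=1$ specialization of the formula for $H_{(4,3,0)}$ and its second sum is that of $H_{(4,0,3)}$, and the two agree by the dihedral symmetry $F_{(c_0,c_1,c_2)}(1,q)=F_{(c_2,c_1,c_0)}(1,q)$ applied to $(4,3,0)$. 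I expect no genuine obstacle at this stage: the only real content — verifying that the listed $H_c(z,q)$ actually solve the Corteel--Welsh system \eqref{eqn:Hrec} — has already been discharged earlier in the subsection, with the longer combinations for $H_{(3,2,2)}$, $H_{(4,3,0)}$ and $H_{(3,3,1)}$ checked by the program of Appendix \ref{app:verify}. The main thing to watch when writing out the corollary is bookkeeping: correctly reading each exponent $\rho\cdot r+\sigma\cdot s$ off the vectors $\rho,\sigma$ and confirming that the collapsed numerators agree monomial-by-monomial with \eqref{eqn:700}--\eqref{eqn:331}.
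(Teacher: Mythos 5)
Your proposal is correct and is essentially identical to the paper's argument: the corollary is obtained by setting $z=1$ in the closed forms for $H_c(z,q)$ proved earlier in the mod $10$ subsection, collapsing the resulting linear combinations of $S_{10}$ terms into single sums, and reading the product sides off \eqref{eqn:Hcprod} (with the two sums in \eqref{eqn:430} coming from $H_{(4,3,0)}$ and $H_{(4,0,3)}$, which agree at $z=1$ by the dihedral symmetry). Your monomial bookkeeping for \eqref{eqn:511}, \eqref{eqn:430}, and \eqref{eqn:331} checks out against the stated numerators.
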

Of these, \eqref{eqn:700}, \eqref{eqn:610}, \eqref{eqn:520}, \eqref{eqn:511}, \eqref{eqn:322}  were proved in \cite{AndSchWar} using their
then newly invented $\typeA_2$ Bailey lemma.

\subsection{Mod 9}

The arrangement is as follows.
\begin{align}
\begin{array}{lll}
6,0,0 		& 		       &\\
5,1,0/5,0,1 & 4,1,1        & \\
4,2,0/4,0,2 & 3,2,1/3,1,2  & 2,2,2\\
\hline
3,3,0.
\end{array}
\end{align}
As always,  identities above the line are given by our truncation procedure:
\begin{align}
H_{(6,0,0)}&=S_{9}( 1,1\mid 1,1)\\
H_{(5,1,0)}&=S_{9}( 0,1\mid 1,1)\\
H_{(4,2,0)}&=S_{9}( 0,0\mid 1,1)\\
H_{(4,1,1)}&=S_{9}( 0,1\mid 0,1)-qS_{9}(1,1\mid 1,1)\\
H_{(3,2,1)}&=S_{9}( 0,0\mid 0,1)-qS_{9}(0,1\mid 1,1)\\
H_{(2,2,2)}&=S_{9}( 0,0\mid 0,0)-qS_{9}(0,1\mid 0,1)\\
H_{(5,0,1)}&=S_{9}( 1,1\mid 0,1)-q(1-z)S_{9}(2,1\mid 1,1)\\
H_{(4,0,2)}&=S_{9}( 1,1\mid 0,0)-q(1-z)S_{9}(2,1\mid 0,1)\\
H_{(3,1,2)}&=S_{9}( 0,1\mid 0,0)-qS_{9}(1,1\mid 0,1).
\end{align}

For $H_{(3,3,0)}$, we have the following guess:
\begin{align}
H_{(3,3,0)}=S_9(0,1\mid 0,1)-q(1+z)S_9(1,1\mid 1,1)
\end{align}

Inspired by the mod $8$ and mod $10$ cases above,
we conjecture that the following relations are enough to prove
all the required recurrences, however at the moment we do not
have a full proof of this.

Nonetheless, it is easy to see that the recurrences for $H_{(6,0,0)}$,
$H_{(5,0,1)}$, $H_{(4,0,2)}$ are satisfied immediately.

We have the non-terminal relations:
\begin{lem}
For $A,B,C,D\in\ZZ$, we have:
\begin{align}
S_9(A,B\mid C,D)-S_9(A+1,B-1\mid C,D)-zq^{A+1}S_9(A+2,B\mid C-1,D)&=0,
\tag{\mbox{$R_1$}}
\label{eqn:m9r1}\\
S_9(A,B\mid C,D)-S_9(A,B\mid C+1,D-1)-q^{C+1}S_9(A-1,B\mid C+2,D)&=0
\tag{\mbox{$R_2$}}
\label{eqn:m9r2}.
\end{align}
\end{lem}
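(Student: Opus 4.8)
The plan is to mimic the proofs of the non-terminal relations \eqref{eqn:m8r1} and \eqref{eqn:m8r2} from the Mod $8$ case essentially verbatim. The key observation is that for $k=3$ the terminal factor appearing in $S_9$, namely
\begin{align*}
T(r_2,s_2)=\dfrac{1}{(q)_{r_2+s_2}(q)_{r_2+s_2+1}}\qbin{r_2+s_2}{r_2}_{q^3},
\end{align*}
depends only on $r_2$ and $s_2$ and is therefore inert under any manipulation that touches only the first indices $r_1,s_1$. Since both \eqref{eqn:m9r1} and \eqref{eqn:m9r2} are ``non-terminal'' — each shifts exactly one of $r_1$ or $s_1$ — the only ingredients that enter are the quadratic part $r_1^2-r_1s_1+s_1^2$, the non-terminal denominators $(q)_{r_1-r_2}(q)_{s_1-s_2}$, and the linear exponents, all of which agree with the Mod $8$ setting. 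Thus the two computations carry over, with $T(r_2,s_2)$ simply carried along as a spectator. This is, incidentally, why these relations are stated identically across moduli $8$, $9$ and $10$.

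Concretely, for \eqref{eqn:m9r1} I would form the difference $S_9(A,B\mid C,D)-S_9(A+1,B-1\mid C,D)$, which factors out $(1-q^{r_1-r_2})$ from each summand (coming from replacing $q^{Ar_1+Br_2}$ by $q^{(A+1)r_1+(B-1)r_2}$). Since $(1-q^{r_1-r_2})/(q)_{r_1-r_2}=1/(q)_{r_1-r_2-1}$ for $r_1>r_2$ and vanishes at $r_1=r_2$, the sum collapses onto the range $r_1>r_2$. The reindexing $r_1\mapsto r_1+1$ then adds $2r_1+1-s_1+A$ to the $q$-exponent and one power of $z$; pulling out the constant prefactor $zq^{A+1}$ leaves precisely $S_9(A+2,B\mid C-1,D)$, establishing the relation. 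Relation \eqref{eqn:m9r2} is identical after interchanging the roles of the two strands: one factors out $(1-q^{s_1-s_2})$, restricts to $s_1>s_2$, and shifts $s_1\mapsto s_1+1$, which adds $2s_1+1-r_1+C$ to the $q$-exponent and yields the prefactor $q^{C+1}$ times $S_9(A-1,B\mid C+2,D)$.

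There is essentially no obstacle here: these are the elementary telescoping identities governing the non-terminal variables, and the entire content is the bookkeeping of a single index shift together with the vanishing of the boundary term ($r_1=r_2$, resp.\ $s_1=s_2$). The only point requiring care is to confirm that the $q^3$-binomial terminal factor is genuinely untouched, which holds because neither shift alters $r_2$ or $s_2$. I would therefore present both identities as a single lemma, give the one-line computation for \eqref{eqn:m9r1} in full, and remark that \eqref{eqn:m9r2} follows by the symmetric argument, exactly as was done for \eqref{eqn:m8r1}--\eqref{eqn:m8r2}.
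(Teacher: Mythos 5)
Your proposal is correct and matches the paper's approach: the paper proves these non-terminal relations explicitly only in the mod $8$ case (via the factor $(1-q^{r_1-r_2})$, resp.\ $(1-q^{s_1-s_2})$, collapsing the sum and reindexing), and states the mod $9$ and mod $10$ versions without proof precisely because the terminal factor depends only on $r_2,s_2$ and is a spectator. Your bookkeeping of the exponent shifts ($2r_1+1-s_1+A$ giving the prefactor $zq^{A+1}$, and symmetrically $q^{C+1}$ with no power of $z$) is exactly right.
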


We have the terminal relations:
\begin{lem}
\begin{align}
S_9(A,B&\mid C,D) -(1+q)S_9(A,B+1\mid C,D+1)  + q S_9(A,B+2\mid C,D+2) \notag\\
&- zq^{2+A+B}S_9(A+2,B+2\mid C-1,D-1) - q^{2+C+D}S_9(A-1,B+2\mid C+2,D+2)
=0
\tag{\mbox{$R_3$}}
\label{eqn:m9r3}\\
S_9(A,B&\mid C,D) -(1+q)S_9(A,B+1\mid C,D+1)  + q S_9(A,B+2\mid C,D+2) \notag\\
&- zq^{2+A+B}S_9(A+2,B+2\mid C-1,D+2) - q^{2+C+D}S_9(A-1,B-1\mid C+2,D+2)
=0.
\tag{\mbox{$R_4$}}
\label{eqn:m9r4}
\end{align}
\end{lem}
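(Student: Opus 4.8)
The plan is to prove both \eqref{eqn:m9r3} and \eqref{eqn:m9r4} by direct manipulation of the defining sum for $S_9=S_{3k}$ at $k=3$, in the spirit of the mod $6$ relations \eqref{eqn:m6r1}--\eqref{eqn:m6r3} and the mod $8$ terminal relations \eqref{eqn:m8r3}--\eqref{eqn:m8r4}. I would first treat the three terms common to $R_3$ and $R_4$, namely $S_9(A,B\mid C,D)-(1+q)S_9(A,B+1\mid C,D+1)+qS_9(A,B+2\mid C,D+2)$. These differ only by shifting $B$ and $D$ together by $+1$ and then $+2$ (with $A$ and $C$ fixed), so each summand carries the common prefactor $z^{r_1}q^{Q+Ar_1+Br_2+Cs_1+Ds_2}$, where $Q=r_1^2-r_1s_1+s_1^2+r_2^2-r_2s_2+s_2^2$, and the three terms collapse to this prefactor times $1-(1+q)q^{r_2+s_2}+q\cdot q^{2(r_2+s_2)}=(1-q^{r_2+s_2})(1-q^{r_2+s_2+1})$. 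Absorbing this factor into the terminal block via $(1-q^{r_2+s_2})(1-q^{r_2+s_2+1})/\bigl((q)_{r_2+s_2}(q)_{r_2+s_2+1}\bigr)=1/\bigl((q)_{r_2+s_2-1}(q)_{r_2+s_2}\bigr)$ leaves a single sum $\Sigma$ supported on $r_2+s_2\geq 1$ and carrying $\qbin{r_2+s_2}{r_2}_{q^3}/\bigl((q)_{r_2+s_2-1}(q)_{r_2+s_2}\bigr)$.

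The heart of the argument is then to split $\Sigma$ using the $q^3$-Pascal recurrence. For \eqref{eqn:m9r3} I would use $\qbin{r_2+s_2}{r_2}_{q^3}=\qbin{r_2+s_2-1}{r_2-1}_{q^3}+q^{3r_2}\qbin{r_2+s_2-1}{r_2}_{q^3}$, writing $\Sigma=\Sigma'+\Sigma''$. In $\Sigma'$ the summand is supported on $r_2\geq 1$; reindexing $r_2\mapsto r_2+1$ restores the standard terminal block $\qbin{r_2+s_2}{r_2}_{q^3}/\bigl((q)_{r_2+s_2}(q)_{r_2+s_2+1}\bigr)$, forces $r_1>r_2$ through the factor $(q)_{r_1-r_2-1}$, and a further reindex $r_1\mapsto r_1+1$ completes the squares: expanding $r_1^2,r_2^2$ and $-r_1s_1,-r_2s_2$ produces exactly the shifts $A\mapsto A+2$, $B\mapsto B+2$, $C\mapsto C-1$, $D\mapsto D-1$ with an overall factor $z\,q^{2+A+B}$, so that $\Sigma'=zq^{2+A+B}S_9(A+2,B+2\mid C-1,D-1)$. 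Symmetrically $\Sigma''$ is supported on $s_2\geq 1$; reindexing $s_2\mapsto s_2+1$ and $s_1\mapsto s_1+1$ and folding in the extra $q^{3r_2}$ yields the shifts $A\mapsto A-1$, $B\mapsto B+2$, $C\mapsto C+2$, $D\mapsto D+2$ and an overall factor $q^{2+C+D}$ (no extra $z$, since $r_1$ is untouched), so $\Sigma''=q^{2+C+D}S_9(A-1,B+2\mid C+2,D+2)$. Substituting these two evaluations into the collapsed form of the first three terms reproduces exactly the two subtracted terms of \eqref{eqn:m9r3}, so the relation vanishes identically.

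For \eqref{eqn:m9r4} I would run the mirror computation, applying instead the companion recurrence $\qbin{r_2+s_2}{r_2}_{q^3}=q^{3s_2}\qbin{r_2+s_2-1}{r_2-1}_{q^3}+\qbin{r_2+s_2-1}{r_2}_{q^3}$; now the extra $q^{3s_2}$ attaches to the $r_2$-reindexed piece and the $q^3$-free term to the $s_2$-reindexed piece, producing the parameters $(A+2,B+2\mid C-1,D+2)$ and $(A-1,B-1\mid C+2,D+2)$ of $R_4$. This reflects the $r\leftrightarrow s$ near-symmetry of the summand (broken only by $z^{r_1}$), which is precisely why $R_3$ and $R_4$ share their first three terms but differ in the last two. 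The one delicate point throughout is the bookkeeping of the \emph{induced} shifts on the non-terminal variables: each reindexing of a terminal index $r_2$ or $s_2$ is forced, through the factors $(q)_{r_1-r_2}$ and $(q)_{s_1-s_2}$, to be accompanied by a reindexing of $r_1$ or $s_1$, and it is the cross term $-r_1s_1$ in $Q$ that converts an $s_1$-shift into the correction $A\mapsto A-1$ (and an $r_1$-shift into $C\mapsto C-1$). Checking that the four support conditions ($r_2\geq 1$, $s_2\geq 1$, $r_1>r_2$, $s_1>s_2$) line up so the reindexed ranges are exactly those of the target $S_9$ sums is the main thing to get right; once the shifts are tabulated as above, the identity follows at once.
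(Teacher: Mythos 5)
Your proof is correct and follows essentially the same route as the paper's: collapse the first three terms into the factor $(1-q^{r_2+s_2})(1-q^{r_2+s_2+1})$, absorb it into the Pochhammer symbols, and split via the two $q^3$-Pascal recurrences, with the reindexings $(r_1,r_2)\mapsto(r_1+1,r_2+1)$ and $(s_1,s_2)\mapsto(s_1+1,s_2+1)$ producing exactly the two subtracted terms. All the parameter shifts you tabulate agree with the paper's computation.
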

\begin{proof}
The first one can be proved as follows:
\begin{align*}
S_9&(A,B\mid C,D) -(1+q)S_9(A,B+1\mid C,D+1)  + q S_9(A,B+2\mid C,D+2)\notag\\
&=\sum_{r_1,r_2,s_1,s_2\geq 0}
z^{r_1}\dfrac{q^{r_1^2-r_1s_1+s_1^2+r_2^2-r_2s_2+s_2^2+Ar_1+Br_2+Cs_1+Ds_2}(1-q^{r_2+s_2})(1-q^{r_2+s_2+1})}
{(q)_{r_1-r_2}(q)_{s_1-s_2}(q)_{r_2+s_2}(q)_{r_2+s_2+1}}
\qbin{r_2+s_2}{r_2}_{q^3}\\
&=
\sum_{r_1,r_2,s_1,s_2\geq 0}
z^{r_1}\dfrac{q^{r_1^2-r_1s_1+s_1^2+r_2^2-r_2s_2+s_2^2+Ar_1+Br_2+Cs_1+Ds_2}(1-q^{r_2+s_2})(1-q^{r_2+s_2+1})}
{(q)_{r_1-r_2}(q)_{s_1-s_2}(q)_{r_2+s_2}(q)_{r_2+s_2+1}}\notag\\
&\quad\quad\times \left(q^{3r_2}\qbin{r_2+s_2-1}{r_2}_{q^3}+\qbin{r_2+s_2-1}{r_2-1}_{q^3}\right)
\\
&=
\sum_{r_1,r_2,s_1\geq 0, \mathbf{s_2>0}}
z^{r_1}\dfrac{q^{r_1^2-r_1s_1+s_1^2+r_2^2-r_2s_2+s_2^2+Ar_1+(B+3)r_2+Cs_1+Ds_2}(1-q^{r_2+s_2})(1-q^{r_2+s_2+1})}
{(q)_{r_1-r_2}(q)_{s_1-s_2}(q)_{r_2+s_2}(q)_{r_2+s_2+1}}
\qbin{r_2+s_2-1}{r_2}_{q^3}\notag\\
&\quad+\sum_{r_1,s_1,s_2\geq 0, \mathbf{r_2>0}}
z^{r_1}\dfrac{q^{r_1^2-r_1s_1+s_1^2+r_2^2-r_2s_2+s_2^2+Ar_1+Br_2+Cs_1+Ds_2}(1-q^{r_2+s_2})(1-q^{r_2+s_2+1})}
{(q)_{r_1-r_2}(q)_{s_1-s_2}(q)_{r_2+s_2}(q)_{r_2+s_2+1}}
\qbin{r_2+s_2-1}{r_2-1}_{q^3}\notag
\\
&=
\sum_{r_1,r_2,s_1\geq 0, s_2>0}
z^{r_1}\dfrac{q^{r_1^2-r_1s_1+s_1^2+r_2^2-r_2s_2+s_2^2+Ar_1+(B+3)r_2+Cs_1+Ds_2}}
{(q)_{r_1-r_2}(q)_{s_1-s_2}(q)_{r_2+s_2-1}(q)_{r_2+s_2}}
\qbin{r_2+s_2-1}{r_2}_{q^3}\notag\\
&\quad+\sum_{r_1,s_1,s_2\geq 0, r_2>0}
z^{r_1}\dfrac{q^{r_1^2-r_1s_1+s_1^2+r_2^2-r_2s_2+s_2^2+Ar_1+Br_2+Cs_1+Ds_2}}
{(q)_{r_1-r_2}(q)_{s_1-s_2}(q)_{r_2+s_2-1}(q)_{r_2+s_2}}
\qbin{r_2+s_2-1}{r_2-1}_{q^3}\notag
\\
&=
q^{2+C+D}\sum_{r_1,r_2,s_1,s_2\geq 0}
z^{r_1}\dfrac{q^{r_1^2-r_1s_1+s_1^2+r_2^2-r_2s_2+s_2^2+(A-1)r_1+(B+2)r_2+(C+2)s_1+(D+2)s_2}}
{(q)_{r_1-r_2}(q)_{s_1-s_2}(q)_{r_2+s_2}(q)_{r_2+s_2+1}}
\qbin{r_2+s_2}{r_2}_{q^3}\notag\\
&+
zq^{2+A+B}\sum_{r_1,r_2,s_1,s_2\geq 0}
z^{r_1}\dfrac{q^{r_1^2-r_1s_1+s_1^2+r_2^2-r_2s_2+s_2^2+(A+2)r_1+(B+2)r_2+(C-1)s_1+(D-1)s_2}}
{(q)_{r_1-r_2}(q)_{s_1-s_2}(q)_{r_2+s_2}(q)_{r_2+s_2+1}}
\qbin{r_2+s_2}{r_2}_{q^3}\notag
\end{align*}
as required.

\eqref{eqn:m9r4} can be deduced similarly,
but we use:
\begin{align*}
\qbin{r_2+s_2}{r_2}_{q^3}=q^{3s_2}\qbin{r_2+s_2-1}{r_2-1}_{q^3}+\qbin{r_2+s_2-1}{r_2}_{q^3}.
\end{align*}
\end{proof}

\section{On proofs for higher moduli}
\label{sec:higher}
We now speculate on the shape of proofs for higher moduli.

Fix $k\geq 3$, let $m\in 3k+\{-1,0,+1\}$ and let $\ell=m-3$. 
As before, let $c$ be a length $3$ composition of $\ell$.
Recall the notation $\delta_j$ \eqref{eqn:dvect}.
It is very easy to generalize the ``non-terminal'' relations
\eqref{eqn:m8r1} and \eqref{eqn:m8r2} in moduli $8$, $9$ and $10$ to higher moduli.
\begin{lem} We have the following relations for $1\leq i\leq k-2$:
\begin{align}
S_m&(\rho\mid\sigma) - S_m(\rho + \delta_i - \delta_{i+1} \mid \sigma ) 
\notag\\
&- zq^{i+\rho_1+\rho_2+\cdots + \rho_i}S_m(\rho + 2\delta_1+2\delta_2+\cdots +2 \delta_i\mid \sigma - \delta_1-\delta_2-\cdots-\delta_i)
=0
\tag{\mbox{$R_1^{(i)}$}}
\label{eqn:R1gen}\\
S_m&(\rho\mid\sigma) - S_m(\rho  \mid \sigma + \delta_i - \delta_{i+1}) \notag\\
&- q^{i+\sigma_1+\sigma_2+\cdots + \sigma_i}S_m(\rho - \delta_1-\delta_2-\cdots-\delta_i\mid \sigma  + 2\delta_1+2\delta_2+\cdots +2 \delta_i)
=0
\tag{\mbox{$R_2^{(i)}$}}
\label{eqn:R2gen}
\end{align}
\end{lem}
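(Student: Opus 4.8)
The plan is to prove both relations by the same mechanism used for the mod $8$ identities \eqref{eqn:m8r1} and \eqref{eqn:m8r2}: forming the difference of the first two sums produces a telescoping factor $(1-q^{r_i-r_{i+1}})$ (resp. $(1-q^{s_i-s_{i+1}})$) in the summand, which cancels one factor from the denominator, after which a single simultaneous shift of the prefix $(r_1,\dots,r_i)$ (resp. $(s_1,\dots,s_i)$) rewrites the sum as the required $S_m$. Since all manipulated indices satisfy $1\le i\le k-2$, the tail factor at index $k-1$ --- the only part of the summand that distinguishes $S_{3k-1}$, $S_{3k}$ and $S_{3k+1}$ --- is never touched, so a single argument handles all three moduli at once.

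Concretely, for \eqref{eqn:R1gen} I would first observe that replacing $\rho$ by $\rho+\delta_i-\delta_{i+1}$ changes only the linear exponent $\rho\cdot r$, increasing it by $r_i-r_{i+1}$. Writing $Q(r,s)=\sum_j(r_j^2-r_js_j+s_j^2)$, this gives
\begin{align*}
S_m(\rho\mid\sigma)-S_m(\rho+\delta_i-\delta_{i+1}\mid\sigma)
=\sum_{r,s} z^{r_1}\,\frac{q^{Q(r,s)+\rho\cdot r+\sigma\cdot s}\,(1-q^{r_i-r_{i+1}})}{\prod_{j}(q)_{r_j-r_{j+1}}(q)_{s_j-s_{j+1}}\cdot(\text{tail})}.
\end{align*}
The factor $(1-q^{r_i-r_{i+1}})$ annihilates the $r_i=r_{i+1}$ contribution and turns $(q)_{r_i-r_{i+1}}$ into $(q)_{r_i-r_{i+1}-1}$, leaving a sum over $r_i>r_{i+1}$.

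The key step is the re-indexing $r_j\mapsto r_j+1$ for $1\le j\le i$ (all other variables unchanged). Because the entire prefix is shifted together, every interior difference $r_1-r_2,\dots,r_{i-1}-r_i$ is preserved, while $r_i-r_{i+1}$ increases by $1$, exactly restoring the deleted denominator factor; the shifted index set is again the standard chain $r_1\ge\cdots\ge r_{k-1}\ge0$. Tracking the exponent, the terms $r_j^2=(r_j+1)^2$ raise the coefficient of each $r_j$ ($j\le i$) by $2$, the cross terms $-r_js_j$ lower the coefficient of each $s_j$ ($j\le i$) by $1$, and $z^{r_1}$ acquires one extra power of $z$; collecting the resulting constant $zq^{\,i+\rho_1+\cdots+\rho_i}$ yields precisely $zq^{\,i+\rho_1+\cdots+\rho_i}\,S_m(\rho+2\delta_1+\cdots+2\delta_i\mid\sigma-\delta_1-\cdots-\delta_i)$, which is \eqref{eqn:R1gen}. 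For \eqref{eqn:R2gen} I would run the identical computation with the roles of $r$ and $s$ interchanged: the telescoping factor is $(1-q^{s_i-s_{i+1}})$, the prefix shift is $s_j\mapsto s_j+1$ for $j\le i$, and since $z$ tracks only $r_1$ this shift produces no extra power of $z$, giving the prefactor $q^{\,i+\sigma_1+\cdots+\sigma_i}$ as stated.

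I expect no genuine obstacle: the content is a uniform generalization of the mod $8$ computations. The one point demanding care is the bookkeeping of the simultaneous prefix shift --- one must verify that shifting all of $r_1,\dots,r_i$ (rather than $r_i$ alone) is exactly what keeps the interior differences fixed and matches the target's modification $2\delta_1+\cdots+2\delta_i$ in the first argument and $-\delta_1-\cdots-\delta_i$ in the second. The other detail I would make explicit is that the hypothesis $i\le k-2$ confines all operations to the ``body'' of the summand, leaving the tail at index $k-1$ intact, so that the three cases $m\in 3k+\{-1,0,1\}$ need not be treated separately.
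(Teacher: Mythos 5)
Your proposal is correct and is exactly the argument the paper intends: the paper states this lemma as the "easy generalization" of the mod $8$ non-terminal relations, whose proof is precisely your telescoping factor $(1-q^{r_i-r_{i+1}})$ followed by the simultaneous prefix shift $r_j\mapsto r_j+1$ for $j\le i$ (and the $r\leftrightarrow s$ analogue for $R_2^{(i)}$). You correctly identify the two points that need care — shifting the whole prefix so the interior differences and the summation chain are preserved, and the hypothesis $i\le k-2$ keeping the modulus-dependent tail untouched — so nothing is missing.
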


We also have the ``terminal'' relations, but these now depend on the congruence class of $m$ modulo $3$.
These are proved in analogy with the corresponding relations in moduli $8$, $9$, $10$.
\begin{lem}
\begin{enumerate}
\item If $m=3k+1$ we have:
\begin{align}
S_{3k+1}&(\rho\mid\sigma) 
- S_{3k+1}(\rho\mid\sigma+\delta_{k-1}) 
- qS_{3k+1}(\rho+\delta_{k-1}\mid\sigma+\delta_{k-1}) 
+ qS_{3k+1}(\rho+\delta_{k-1}\mid\sigma+2\delta_{k-1}) \notag\\
&- q^{k-1+\sigma_1+\sigma_2+\cdots+\sigma_{k-1}}
S_{3k+1}(\rho-\delta_1-\delta_2-\cdots-\delta_{k-1}\mid
\sigma+2\delta_1+2\delta_2+\cdots+2\delta_{k-1}) =0
\tag{\mbox{$R_3$}}
\label{eqn:R3mod10gen}
\\
S_{3k+1}&(\rho\mid\sigma) 
- S_{3k+1}(\rho+\delta_{k-1}\mid\sigma) 
- qS_{3k+1}(\rho+\delta_{k-1}\mid\sigma+\delta_{k-1}) 
+ qS_{3k+1}(\rho+2\delta_{k-1}\mid\sigma+\delta_{k-1}) \notag\\
&- zq^{k-1+\rho_1+\rho_2+\cdots+\rho_{k-1}}
S_{3k+1}(\rho+2\delta_1+2\delta_2+\cdots+2\delta_{k-1}\mid
\sigma-\delta_1-\delta_2-\cdots-\delta_{k-1}) =0
\tag{\mbox{$R_4$}}
\label{eqn:R4mod10gen}
\end{align}
\item If $m=3k-1$ and if $\sigma_{k-1}=0$, we have:
\begin{align}
S_{3k-1}&(\rho\mid\sigma)-S_{3k-1}(\rho\mid \sigma+\delta_{k-1})
-qS_{3k-1}(\rho+\delta_{k-1}\mid \sigma+\delta_{k-1})\notag\\
&+qS_{3k-1}(\rho+\delta_{k-1}\mid \sigma+\delta_{k-2}+\delta_{k-1})=0
\tag{\mbox{$R_3$}}
\label{eqn:R3mod8gen}
\end{align}
If $m=3k-1$ and if $\rho_{k-1}=0$, we have:
\begin{align}
S_{3k-1}&(\rho\mid\sigma)-S_{3k-1}(\rho+\delta_{k-1}\mid \sigma)
-qS_{3k-1}(\rho+\delta_{k-1}\mid \sigma+\delta_{k-1})\notag\\
&+qS_{3k-1}(\rho+\delta_{k-2}+\delta_{k-1}\mid \sigma+\delta_{k-1})=0
\tag{\mbox{$R_4$}}
\label{eqn:R4mod8gen}
\end{align}
\item Finally, if $m=3k$, we have:
\begin{align}
S_{3k}&(\rho\mid\sigma) - (1+q)S_{3k}(\rho+\delta_{k-1}\mid \sigma+\delta_{k-1})
+qS_{3k}(\rho+2\delta_{k-1}\mid \sigma+2\delta_{k-1})\notag\\
&-zq^{k-1+\rho_1+\rho_2+\cdots+\rho_{k-1}}S_{3k}(\rho+2\delta_1+2\delta_2+\cdots+2\delta_{k-1}\mid \sigma-\delta_1-\delta_2-\cdots-\delta_{k-1})\notag\\
&-q^{k-1+\sigma_1+\sigma_2+\cdots+\sigma_{k-1}}S_{3k}(\rho-\delta_1-\delta_2-\cdots-\delta_{k-2}+2\delta_{k-1}\mid \sigma+2\delta_1+2\delta_2+\cdots+2\delta_{k-1})=0
\tag{\mbox{$R_3$}}
\label{eqn:R3mod9gen}\\
S_{3k}&(\rho\mid\sigma) - (1+q)S_{3k}(\rho+\delta_{k-1}\mid \sigma+\delta_{k-1})
+qS_{3k}(\rho+2\delta_{k-1}\mid \sigma+2\delta_{k-1})\notag\\
&-zq^{k-1+\rho_1+\rho_2+\cdots+\rho_{k-1}}S_{3k}(\rho+2\delta_1+2\delta_2+\cdots+2\delta_{k-1}\mid \sigma-\delta_1-\delta_2-\cdots-\delta_{k-2}+2\delta_{k-1})\notag\\
&-q^{k-1+\sigma_1+\sigma_2+\cdots+\sigma_{k-1}}
S_{3k}(\rho-\delta_1-\delta_2-\cdots-\delta_{k-1}\mid \sigma+2\delta_1+2\delta_2+\cdots+2\delta_{k-1})
=0
\tag{\mbox{$R_4$}}
\label{eqn:R4mod9gen}
\end{align}
\end{enumerate}
\end{lem}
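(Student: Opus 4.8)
The guiding principle is that each of these relations lives entirely in the two \emph{terminal} summation variables $r_{k-1}$ and $s_{k-1}$: the factor $z^{r_1}q^{\sum_i(r_i^2-r_is_i+s_i^2)+\rho\cdot r+\sigma\cdot s}\big/\prod_{1\le i\le k-2}(q)_{r_i-r_{i+1}}(q)_{s_i-s_{i+1}}$ is common to all three families $S_{3k-1},S_{3k},S_{3k+1}$, and a shift by $\delta_{k-1}$ multiplies the summand only by a monomial in $q^{r_{k-1}}$ and $q^{s_{k-1}}$ (this is \eqref{eqn:Sshift} at the terminal slot). Consequently the whole argument is the $k=3$ computation of \eqref{eqn:m8r3}--\eqref{eqn:m8r4}, \eqref{eqn:m9r3}--\eqref{eqn:m9r4} and \eqref{eqn:m10r3}--\eqref{eqn:m10r4} transcribed under $r_2\mapsto r_{k-1}$, $s_2\mapsto s_{k-1}$, with the inner indices $r_1,\dots,r_{k-2}$ and $s_1,\dots,s_{k-2}$ carried along as silent spectators. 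I would therefore present the proof case by case on $m\bmod 3$, in each case collecting the listed terms over a single summation and reading off the polynomial prefactor multiplying the common terminal block.

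For $m=3k$ the three ``short'' terms (those shifting only $\delta_{k-1}$) assemble into $1-(1+q)q^{r_{k-1}+s_{k-1}}+q^{2(r_{k-1}+s_{k-1})+1}=(1-q^{r_{k-1}+s_{k-1}})(1-q^{r_{k-1}+s_{k-1}+1})$, which collapses the terminal denominator $(q)_{r_{k-1}+s_{k-1}}(q)_{r_{k-1}+s_{k-1}+1}$ to $(q)_{r_{k-1}+s_{k-1}-1}(q)_{r_{k-1}+s_{k-1}}$. I would then apply the $q^3$-Pascal recurrence $\qbin{r_{k-1}+s_{k-1}}{r_{k-1}}_{q^3}=q^{3r_{k-1}}\qbin{r_{k-1}+s_{k-1}-1}{r_{k-1}}_{q^3}+\qbin{r_{k-1}+s_{k-1}-1}{r_{k-1}-1}_{q^3}$ (and its $r\leftrightarrow s$ mirror for \eqref{eqn:R4mod9gen}), splitting the sum into two pieces supported respectively on $s_{k-1}>0$ and $r_{k-1}>0$. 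For $m=3k+1$ the four short terms instead assemble into the clean product $(1-q^{s_{k-1}})(1-q^{1+r_{k-1}+s_{k-1}})$, reducing $(q)_{s_{k-1}}(q)_{r_{k-1}+s_{k-1}+1}$ directly; and for $m=3k-1$ the extra terminal factor $q^{2r_{k-1}s_{k-1}}$ produces instead the two-piece expression $(1-q^{s_{k-1}})(1-q^{1+r_{k-1}+s_{k-1}})-q^{1+r_{k-1}+2s_{k-1}}(1-q^{s_{k-2}-s_{k-1}})$ seen in \eqref{eqn:m8r3}, whose two halves reduce the pairs $(q)_{s_{k-1}},(q)_{r_{k-1}+s_{k-1}+1}$ and $(q)_{s_{k-2}-s_{k-1}}$ separately.

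The common closing step is a \emph{simultaneous} reindexing of the terminal variable: one cannot shift $s_{k-1}$ alone without corrupting $(q)_{s_{k-2}-s_{k-1}}$, so I would send $s_i\mapsto s_i+1$ for \emph{all} $i=1,\dots,k-1$ at once (respectively all $r_i$, with the twist $z^{r_1}\mapsto z^{r_1+1}$ supplying the factor $z$). This leaves every difference Pochhammer intact and, through $s_i^2\mapsto(s_i+1)^2$ and $-r_is_i\mapsto-r_is_i-r_i$, manufactures precisely the long shifts $\sigma\mapsto\sigma+2\delta_1+\cdots+2\delta_{k-1}$ and $\rho\mapsto\rho-\delta_1-\cdots-\delta_{k-1}$ together with the scalar $q^{(k-1)+\sigma_1+\cdots+\sigma_{k-1}}$ (the $(k-1)$ from the $k-1$ unit constants, the $\sum\sigma_i$ from the linear terms). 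In the $m=3k$ case the Pascal monomial $q^{3r_{k-1}}$ rides along on one branch, adding $3$ to the terminal $\rho$-slot and thereby turning the uniform $-\delta_1-\cdots-\delta_{k-1}$ into the asymmetric $-\delta_1-\cdots-\delta_{k-2}+2\delta_{k-1}$ recorded in \eqref{eqn:R3mod9gen}; this is exactly where the two long terms there acquire their distinct shapes. For $m=3k+1$ the single reduced sum matches the lone long term of \eqref{eqn:R3mod10gen}, and for $m=3k-1$ the two reduced halves coincide and cancel, which is why those relations carry no long term and close to $0$.

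The part I expect to be fussiest is the bookkeeping of this reindexing across a growing number of inert variables: confirming that the difference Pochhammers $(q)_{r_i-r_{i+1}}$ are genuinely invariant under the uniform shift, that the prefactor exponent is exactly $(k-1)+\sum_i\sigma_i$ and not off by a boundary term, and that the asymmetric terminal slot is placed correctly in each of the four mod-$3k$ relations. The one genuinely case-specific subtlety is the hypothesis $\sigma_{k-1}=0$ (resp.\ $\rho_{k-1}=0$) in the $m=3k-1$ family: I would need to verify, as in \eqref{eqn:m8r3}, that this is exactly the condition under which the $q^{2r_{k-1}s_{k-1}}$-twisted square completes so that the two reduced halves land on the same summand and cancel. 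Granting the three $k=3$ prototypes already proved in Sections~\ref{sec:m567} and~\ref{sec:m8910}, no new identity is needed---only a careful transcription.
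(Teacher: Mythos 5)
Your proposal is correct and follows exactly the route the paper intends: the paper offers no written proof of this lemma beyond the remark that the relations ``are proved in analogy with the corresponding relations in moduli $8$, $9$, $10$,'' and your write-up is a faithful (indeed more detailed) elaboration of that analogy, with the polynomial factorizations, the $q^3$-Pascal splitting for $m=3k$, the simultaneous reindexing $s_i\mapsto s_i+1$ (resp.\ $r_i\mapsto r_i+1$) across all inert variables, and the role of the hypothesis $\sigma_{k-1}=0$ (resp.\ $\rho_{k-1}=0$) all matching the prototype computations of \eqref{eqn:m8r3}--\eqref{eqn:m8r4} and \eqref{eqn:m9r3}--\eqref{eqn:m9r4}. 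No gap.
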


\begin{conj}
In each modulus, the relations $R_1$--$R_4$ above are enough to prove the recurrences.
\end{conj}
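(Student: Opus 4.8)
The plan is to reduce the conjecture to a purely linear-algebraic membership statement and then to attack that statement by a normal-form (reduction) argument. By the Corteel--Welsh theorem and the uniqueness it supplies, together with Theorem \ref{thm:remainingconj}, it suffices to verify that the candidate functions $H_c$ --- defined by Conjecture \ref{conj:main} on the seed compositions and by solving the designated recursions for the rest --- satisfy \emph{every} relation in the system \eqref{eqn:Hrec}, the initial conditions being already handled by Theorem \ref{thm:initcond} and Corollary \ref{cor:initcond}. First I would fix $m\in 3k+\{-1,0,1\}$ and regard the symbols $S_m(\rho\mid\sigma)$ as generators of a free module over the Laurent ring $\ZZ[z^{\pm1},q^{\pm1}]$. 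The relations $R_1^{(i)}$, $R_2^{(i)}$ (for $1\le i\le k-2$) of \eqref{eqn:R1gen}, \eqref{eqn:R2gen} and the terminal relations $R_3$, $R_4$ in the relevant congruence class (\eqref{eqn:R3mod10gen}--\eqref{eqn:R4mod9gen}) generate a submodule $N$ of genuine identities. Using Conjecture \ref{conj:main} and the shift relation \eqref{eqn:Sshift}, each $H_c$ --- seed or non-seed --- becomes an explicit $\ZZ[z^{\pm1},q^{\pm1}]$-combination of seed sums, so every instance of \eqref{eqn:Hrec} collapses to a single residue $\Xi_c$ of the free module, and the conjecture is precisely the assertion that $\Xi_c\in N$ for all $c$.

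The central step is a completeness (normal-form) lemma: the relations $R_1$--$R_4$ should allow one to rewrite an arbitrary $S_m(\rho\mid\sigma)$ as a Laurent-polynomial combination of a fixed finite family of seed sums $S_m(\mathrm{e}_a\mid\mathrm{e}_b)$ with $-1\le a,b\le k-1$. The non-terminal relations implement the elementary moves $\rho\mapsto\rho+\delta_i-\delta_{i+1}$ and $\sigma\mapsto\sigma+\delta_i-\delta_{i+1}$ on the first $k-1$ coordinates, at the cost of $z$- and $q$-weighted terms whose arguments are driven toward the edges of the arrangement, while the terminal relations, whose precise shape is dictated by the terminal factor of $S_m$ and hence by $m\bmod 3$, absorb the last coordinate $\delta_{k-1}$. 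I would organize this as an induction on $k$: the relations indexed by $i\le k-2$ are formally the ones available at rank $k-1$, so once $R_3$ and $R_4$ have peeled off the terminal coordinate, the reduction of the top $k-1$ coordinates is governed by the inductive hypothesis. A suitable weight, for instance the total displacement $\sum_i(\rho_i+\sigma_i)$ refined by a lexicographic tie-breaker, must be shown to decrease under each move so as to guarantee termination and a well-defined (ideally confluent) normal form.

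Granting the normal-form lemma, the conjecture for a fixed modulus reduces to a finite check: reduce each residue $\Xi_c$ to its seed normal form $\Sigma_c$ modulo $N$ and confirm that $\Sigma_c$ is the zero element of the free module. Since the seed sums $S_m(\mathrm{e}_a\mid\mathrm{e}_b)$ are (as one expects and would need to verify) linearly independent, the formal vanishing of $\Sigma_c$ is equivalent to its vanishing as a power series; when Conjecture \ref{conj:main} is already available --- as in moduli $5,6,7,8,10$ --- this vanishing is automatic and the reduction is merely confirmatory, which is exactly why the computer check of Appendix \ref{app:verify} succeeds. The substance of the conjecture is to make this argument uniform in $k$ \emph{without} presupposing Conjecture \ref{conj:main}, i.e.\ to exhibit by a closed-form, $k$-independent reduction that the normal form of every recurrence residue is identically zero. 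This is the main obstacle: the non-terminal and terminal relations interact nontrivially at the interface coordinates $\delta_{k-2},\delta_{k-1}$, the admissible terminal relation depends on whether $\rho_{k-1}$ or $\sigma_{k-1}$ vanishes (already visible in \eqref{eqn:R3mod8gen}--\eqref{eqn:R4mod8gen}), and the explicit certificates balloon rapidly --- the mod $8$ and mod $10$ recurrences already demand combinations of hundreds of terms. Producing a structural certificate matching the boundary bookkeeping of the Corteel--Welsh subsets $J\subseteq I_c$ to a telescoping composition of the elementary $R_i$-moves, rather than a case-by-case verification, is where I expect the real difficulty --- and likely the need for a genuinely new idea, perhaps a Hall--Littlewood or crystal-theoretic reading of the relations --- to lie.
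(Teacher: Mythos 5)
There is a genuine gap here, and in fact you concede it yourself: what you have written is a reduction strategy, not a proof. The statement in question is an open conjecture in the paper --- the authors supply no proof of it, only explicit certificates for moduli $5$--$10$ (Sections \ref{sec:m567}, \ref{sec:m8910}, Appendix \ref{app:verify}), where each recurrence residue is exhibited as a concrete $\ZZ[z,q]$-linear combination of instances of $R_1$--$R_4$ and checked symbolically. Your first paragraph correctly reformulates the conjecture in exactly the terms the paper's computations implicitly use: treat the $S_m(\rho\mid\sigma)$ as formal symbols, let $N$ be the module generated by the relations, and ask whether each residue $\Xi_c$ of \eqref{eqn:Hrec} lies in $N$. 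That framing is sound and matches the paper. But everything after that is conditional. The ``normal-form lemma'' is asserted, not proved: you do not produce a weight that decreases under the moves $R_1^{(i)}$, $R_2^{(i)}$, $R_3$, $R_4$ (note that \eqref{eqn:R1gen} and \eqref{eqn:R2gen} \emph{increase} several coordinates of $\rho$ or $\sigma$ by $2$ while decreasing others by $1$, so termination is far from obvious), you do not address confluence, and you do not carry out the induction on $k$ you propose. More importantly, even granting a normal form, membership $\Xi_c\in N$ does not follow: a terminating, confluent rewriting system would only tell you that $\Xi_c$ has a well-defined normal form, and you would still need to show that normal form is zero, uniformly in $k$ and $c$. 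Deducing that from the analytic vanishing of $\Xi_c$ would require the linear independence of the reduced sums over $\ZZ[z^{\pm1},q^{\pm1}]$, which you flag as unverified --- and for the moduli where Conjecture \ref{conj:main} is not yet known, the analytic vanishing is not available either, so the argument would be circular there.

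A smaller logical point: you write that for moduli $5,6,7,8,10$ the vanishing of the normal form is ``automatic'' because Conjecture \ref{conj:main} is available. The paper's logic runs in the opposite direction: the formal membership $\Xi_c\in N$ is what is verified (by exhibiting the certificate), and Conjecture \ref{conj:main} for those moduli is a \emph{consequence}, via the uniqueness in the Corteel--Welsh theorem together with Theorem \ref{thm:initcond} and Corollary \ref{cor:initcond}. Truth of the identities as $q$-series does not by itself certify membership in $N$. In short, your proposal is a reasonable and honest research plan whose central lemma --- a structural, $k$-independent certificate replacing the case-by-case linear combinations of Appendix \ref{app:verify} --- is precisely the open content of the conjecture; as it stands it does not constitute a proof.
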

\appendix

\section{Verifying the recurrences}
\label{app:verify}

We use SAGE \cite{sagemath} to verify our recurrences.
For Mathematica implementations of useful tools related to cylindric partitions,
see Ablinger and Uncu's package \texttt{qFunctions} \cite{AblUnc}.

In the ancillary files section of this paper on \texttt{arXiv}, 
we include all relevant files required for proofs.

Below, we show the verification process for our mod 10 identities. 
First, we need some basic setup:
\begin{python}
from sage.combinat.q_analogues import q_pochhammer as po
\end{python}
\begin{python}
var('q');
var('z');
function('S');
\end{python}

It is beneficial to define and use canonical forms of compositions $(c_0,c_1,c_2)$.
We say that $(c_0,c_1,c_2)$  is in a canonical form if
$c_0$ is the biggest part and if this part is repeated, then $c_1$ is also 
equal to $c_0$. 
Using the rotational symmetry of $H_{(c_0,c_1,c_2)}(z,q)$ (see \eqref{eqn:Frotsym}),
we may assume that $(c_0,c_1,c_2)$ is in a canonical form.
As examples, the canonical form of $(4,0,9)$ is $(9,4,0)$, the canonical form of $(4,0,4)$ is $(4,4,0)$, etc.
The following function inputs a composition $c$ and outputs its canonical form.
\begin{python}
def canform(c):
    m=max(c)
    s=c.index(m)
    if c[0]==c[2] or c[1]==c[2] or c[0]==c[1]:
        return(tuple(sorted(c,reverse=True)))
    if s==1:
        return((c[1],c[2],c[0]))
    if s==2:
        return((c[2],c[0],c[1]))       
    return(c)
\end{python}

Now we encode the Corteel--Welsh recursion (see also \cite{AblUnc}).
The following function computes the composition $c(J)$ when given $c$ and $J\subseteq I_c$.
\begin{python}
def cJ(c,J):
    ans=[i for i in c]
    for i in [1,2]:
        if (i in J) and not(i-1 in J):
            ans[i]=c[i]-1
        if not(i in J) and ((i-1) in J):
            ans[i]=c[i]+1
    if (0 in J) and not(2 in J):
        ans[0]=c[0]-1
    if not(0 in J) and (2 in J):
        ans[0]=c[0]+1
    return tuple(ans) 
\end{python}
With this, we now easily compute the recurrence using --
\begin{python}
def recH(c):
    Ic=[i for i in range(3) if c[i]>0]
    J=[tuple(x) for x in Subsets(Ic) if len(x)>0]
    return sum([-(-1)^len(j)*po(len(j)-1,z*q)*(H[canform(cJ(c,j))](len(j))) for j in J])
\end{python}
 
We should now define our $H_c$ functions.
In the following, $j$ corresponds to the shift $z\mapsto zq^j$, see \eqref{eqn:Sshift}.
\begin{python}
H = {
    (7,0,0): lambda j: S( 1+j,1,1,1),
    (6,1,0): lambda j: S(   j,1,1,1),
    (5,2,0): lambda j: S(   j,0,1,1),

    (5,1,1): lambda j: S(   j,1,0,1)-q*S(1+j,1,1,1),
    (4,2,1): lambda j: S(   j,0,0,1)-q*S(  j,1,1,1),

    (3,2,2): lambda j: S(   j,0,0,0)-q*S(  j,1,0,1),

    (6,0,1): lambda j: S( 1+j,1,0,1)-q*(1-z*q^j)*S(2+j,1,1,1),
    (5,0,2): lambda j: S( 1+j,1,0,0)-q*(1-z*q^j)*S(2+j,1,0,1),

    (4,1,2): lambda j: S(  j,1,0,0)-q*S(1+j,1,0,1),

    (4,3,0): lambda j: S(j-1, 0, 1, 1)-S(j, 1, 0, 1)+(-z*q^j+1)*S(1+j, 0, 0, 1)
                        +q*S(1+j, 1, 1, 1)*z*q^j,

    (3,3,1): lambda j: S(j-1, 0, 0, 1)-q*S(j-1, 1, 1, 1)-S(j, 0, 1, 1)*z*q^j
                        -S(j, 1, 0, 0)+(-q^j*z+1)*S(1+j, 0, 0, 0)+q^(1+j)*S(1+j, 1, 0, 1)*z,

    (4,0,3): lambda j:  S(j, 0, 0, 1)-q*S(j, 1, 1, 1)-S(1+j, 0, 1, 1)*z*q*q^j
                        -S(1+j, 1, 0, 0)+(-q*z*q^j+1)*S(2+j, 0, 0, 0) 
                        +q^2*S(2+j, 1, 0, 1)*z*q^j+S(2+j, 1, 0, 0)*z*q*q^j
}
\end{python}

The next function computes what needs to be proved to equal $0$
in order to establish the recurrence for $c$.
\begin{python}
def checkrec(c):
    return expand(recH(c) - H[c](0))
\end{python}
We determine all compositions of $n$ in standard form:
\begin{python}
def allprofiles(n):
    c=tuple(Compositions(n+3,length=3))
    c=(canform(tuple(i)) for i in c)
    c=((i[0]-1,i[1]-1,i[2]-1) for i in c)
    return(tuple(Set(c)))
\end{python}
and run through each of them, checking if their recurrences are satisfied.
\begin{python}
for c in allprofiles(7):
    print(c)
    print(checkrec(c))
    print("=======")
\end{python}

After excuting all of this, we see the output:
\begin{python}
(5, 2, 0)
0
=======
(7, 0, 0)
0
=======
(4, 3, 0)
q^4*z^2*S(3, 1, 0, 1) - q^3*z^2*S(3, 0, 0, 0) - q^3*z^2*S(2, 0, 1, 1) 
+ q^2*z*S(3, 1, 0, 0) - q^2*z*S(1, 1, 1, 1) + q*z*S(3, 0, 0, 0) - q*z*S(2, 1, 0, 0) 
- q*z*S(1, 1, 1, 1) + q*z*S(1, 0, 0, 1) - q*S(1, 1, 1, 1) 
+ z*S(1, 0, 0, 1) + S(0, 1, 0, 1) - S(-1, 0, 1, 1)
=======
(6, 0, 1)
0
=======
(3, 3, 1)
q^7*z^3*S(4, 1, 0, 1) - q^6*z^3*S(4, 0, 0, 0) - q^6*z^3*S(3, 0, 1, 1) 
- q^6*z^2*S(4, 1, 0, 1) + q^5*z^2*S(4, 0, 0, 0) + q^5*z^2*S(3, 0, 1, 1) 
+ q^4*z^2*S(4, 1, 0, 0) - q^4*z^2*S(2, 1, 1, 1) + q^3*z^2*S(4, 0, 0, 0) 
- q^3*z^2*S(3, 1, 0, 0) + q^3*z^2*S(2, 0, 0, 1) - q^3*z*S(4, 1, 0, 0) 
+ q^3*z*S(2, 1, 1, 1) - q^2*z*S(4, 0, 0, 0) + q^2*z*S(3, 1, 0, 0) 
- q^2*z*S(2, 1, 0, 1) - q^2*z*S(2, 0, 0, 1) + q*z*S(2, 0, 0, 0) 
- q*z*S(1, 1, 0, 1) + q*S(2, 1, 1, 1) - q*S(1, 1, 0, 1) + z*S(1, 0, 0, 0) 
+ z*S(0, 0, 1, 1) + q*S(-1, 1, 1, 1) - S(2, 0, 0, 0) - S(1, 1, 0, 1) 
+ S(1, 1, 0, 0) + S(0, 1, 0, 0) + S(0, 0, 1, 1) - S(-1, 0, 0, 1)
=======
(4, 2, 1)
0
=======
(3, 2, 2)
-q^3*z^2*S(2, 0, 1, 1) + q^2*z*S(2, 0, 1, 1) - q^2*z*S(1, 1, 1, 1) - q*z*S(1, 0, 1, 1) 
+ q*z*S(1, 0, 0, 1) + q*S(2, 1, 0, 1) - q*S(1, 1, 0, 1) - q*S(0, 1, 1, 1) 
+ q*S(0, 1, 0, 1) - S(1, 1, 0, 0) + S(1, 0, 0, 0) + S(0, 0, 0, 1) - S(0, 0, 0, 0)
=======
(5, 0, 2)
0
=======
(6, 1, 0)
q*z*S(2, 1, 0, 1) + S(1, 0, 1, 1) - S(0, 1, 1, 1)
=======
(4, 1, 2)
q^4*z^2*S(3, 1, 0, 1) + q^3*z^2*S(3, 1, 0, 0) - q^3*z^2*S(3, 0, 0, 0) 
- q^3*z^2*S(2, 0, 1, 1) - q^2*z*S(2, 1, 1, 1) - q^2*z*S(1, 1, 1, 1) + q*z*S(3, 0, 0, 0) 
- q*z*S(2, 1, 0, 0) + q*z*S(2, 0, 0, 1) + q*z*S(1, 0, 0, 1) - S(2, 0, 0, 1) 
+ S(1, 1, 0, 1) + S(1, 0, 0, 0) - S(0, 1, 0, 0)
=======
(4, 0, 3)
0
=======
(5, 1, 1)
q^3*z^2*S(3, 1, 0, 1) - q^2*z*S(3, 1, 0, 1) + q*z*S(2, 1, 0, 0) + q*z*S(2, 0, 1, 1) 
- S(2, 0, 1, 1) + S(1, 1, 1, 1) + S(1, 0, 0, 1) - S(0, 1, 0, 1)
=======
\end{python}
which means that the recurrences for $H_{(5,2,0)}$, $H_{(7,0,0)}$, $H_{(6,0,1)}$, $H_{(4,2,1)}$, $H_{(5,0,2)}$, $H_{(4,0,3)}$ are satisfied on the nose
with our definitions of $H_c$ functions.
To establish the remaining recurrences, we need to show that the correponding expressions, which are linear combinations of various $S$s, are equal to $0$.
To this end, we define the known relations satisfied by the $S$s, namely the \eqref{eqn:m10r1}--\eqref{eqn:m10r4} of the mod $10$ variety:
\begin{python}
def R1(a,b,c,d):
    return(S(a,b,c,d)-S(a+1,b-1,c,d)-z*q^(a+1)*S(a+2,b,c-1,d))

def R2(a,b,c,d):
    return(S(a,b,c,d)-S(a,b,c+1,d-1)-q^(c+1)*S(a-1,b,c+2,d))

def R3(a,b,c,d):
    return(S(a,b,c,d)-S(a,b,c,d+1)-q*S(a,b+1,c,d+1)
            +q*S(a,b+1,c,d+2)-q^(c+d+2)*S(a-1,b-1,c+2,d+2))

def R4(a,b,c,d):
    return(S(a,b,c,d)-S(a,b+1,c,d)-q*S(a,b+1,c,d+1)
            +q*S(a,b+2,c,d+1)-z*q^(a+b+2)*S(a+2,b+2,c-1,d-1))
\end{python}
For compositions $(4,1,2)$, $(3,2,2)$, $(4,3,0)$ and $(3,3,1)$, we
have stored linear combinations of these relations in corresponding text files.
For instance, the file \texttt{412.txt} (see the ancillary files related to the present 
paper on \texttt{arXiv}) begins:
\begin{python}
-z*q*R1(0, 2, 0, 1)
+z*q^2*R1(0, 2, 1, 1)
+z*q^3*R1(-1, 2, 2, 2)
-z*q^2*R1(0, 2, 0, 2)
-z*q^2*R1(2, 2, 0, 0)
...
\end{python}
and has $29$ terms, see \eqref{eqn:rec412lincomb}.
The files for $(3,3,1)$ and $(4,3,0)$ each involve more than $600$ such terms, and for $(3,2,2)$
we have about $300$ terms.
The following function reads such a file, parses it into an algebraic expression understandable
by SAGE, and evaluates $R_1$--$R_4$ into linear combinations of appropriate $S$s.
\begin{python}
def read_rel(filename):
    f   = open(filename, "r")
    rel = (f.read()).replace('\n','').replace('\r','')
    return sage_eval(rel,locals=globals())
\end{python}
Now, to check the recurrence for $(4,1,2)$, we do --
\begin{python}
rel412 = read_rel("412.txt")
print(expand(rel412))
print("=======")
print(expand(checkrec((4,1,2))))
print("=======")
print(expand(rel412-checkrec((4,1,2))))
\end{python}
and the computer (happily) responds --
\begin{python}
q^4*z^2*S(3, 1, 0, 1) + q^3*z^2*S(3, 1, 0, 0) - q^3*z^2*S(3, 0, 0, 0) 
- q^3*z^2*S(2, 0, 1, 1) - q^2*z*S(2, 1, 1, 1) - q^2*z*S(1, 1, 1, 1) 
+ q*z*S(3, 0, 0, 0) - q*z*S(2, 1, 0, 0) + q*z*S(2, 0, 0, 1) + q*z*S(1, 0, 0, 1) 
- S(2, 0, 0, 1) + S(1, 1, 0, 1) + S(1, 0, 0, 0) - S(0, 1, 0, 0)
=======
q^4*z^2*S(3, 1, 0, 1) + q^3*z^2*S(3, 1, 0, 0) - q^3*z^2*S(3, 0, 0, 0) 
- q^3*z^2*S(2, 0, 1, 1) - q^2*z*S(2, 1, 1, 1) - q^2*z*S(1, 1, 1, 1) 
+ q*z*S(3, 0, 0, 0) - q*z*S(2, 1, 0, 0) + q*z*S(2, 0, 0, 1) + q*z*S(1, 0, 0, 1) 
- S(2, 0, 0, 1) + S(1, 1, 0, 1) + S(1, 0, 0, 0) - S(0, 1, 0, 0)
=======
0
\end{python}
establishing that the required recurrence is indeed a consequence of the known relations.
Same process can be repeated with remaining compositions $(3,2,2)$, $(4,3,0)$ and $(3,3,1)$.


\providecommand{\oldpreprint}[2]{\textsf{arXiv:\mbox{#2}/#1}}\providecommand{\preprint}[2]{\textsf{arXiv:#1
  [\mbox{#2}]}}

\end{document}